\def\acts{\ \rotatebox[origin=c]{-90}{$\circlearrowright$}\ }
\def\racts{\ \rotatebox[origin=c]{90}{$\circlearrowleft$}\ }
\theoremstyle{plain}
    \newtheorem{thm}{Theorem}[section]
       \newtheorem{lemma}[thm]{Lemma}
           \newtheorem{theorem}[thm]{Theorem}
            \newtheorem*{maintheorem}{Main Theorem}
    \newtheorem{proposition}[thm]{Proposition}
\theoremstyle{definition}
    \newtheorem{claim}[thm]{Claim}
    \newtheorem{definition}[thm]{Definition}
       \newtheorem{conjecture}[thm]{Conjecture}
    \newtheorem{question}[thm]{Question}
    \newtheorem*{notation*}{Notation and Terminology}
    \newtheorem{remark}[thm]{Remark}
    \newtheorem*{ack}{Acknowledgments}
\theoremstyle{remark}
\newcommand{\Q}{\mathbb{Q}}
\newcommand{\OO}{\mathcal{O}}
\newcommand{\alb}{\operatorname{alb}}
\newcommand{\id}{\operatorname{id}}
\newcommand{\Nef}{\operatorname{Nef}}
\newcommand{\NS}{\operatorname{NS}}
\newcommand{\PE}{\operatorname{PE}}
\newcommand{\SEnd}{\operatorname{SEnd}}
\newcommand{\Supp}{\operatorname{Supp}}
\newcommand{\Codim}{\operatorname{codim}}
\newcommand{\N}{\operatorname{N}}
\newcommand{\Alb}{\operatorname{Alb}}
\newcommand{\Pic}{\operatorname{Pic}}
\newcommand{\alg}{\mathrm{alg}}
\newcommand{\reg}{\mathrm{reg}}
\begin{document}

\title[Kawaguchi-Silverman conjecture]
{Kawaguchi-Silverman conjecture for int-amplified endomorphism}

\author{Sheng Meng, Guolei Zhong}

\address{
\textsc{School of Mathematical Sciences, Shanghai Key Laboratory of PMMP}\endgraf 
\textsc{East China Normal University, Shanghai 200241, China}
}
\email{smeng@math.ecnu.edu.cn}

\address{
\textsc{Center for Complex Geometry, 
	Institute for Basic Science (IBS)}\endgraf
	\textsc{55 Expo-ro, Yuseong-gu, Daejeon, 34126, Republic of Korea
}}
\email{zhongguolei@u.nus.edu, guolei@ibs.re.kr}

\begin{abstract}
Let $X$ be a $\mathbb{Q}$-factorial klt projective variety admitting an int-amplified endomorphism $f$, i.e., the modulus of any eigenvalue of $f^*|_{\NS(X)}$ is greater than $1$. 
We prove Kawaguchi-Silverman conjecture for $f$ and also any other surjective endomorphism of $X$: the first dynamical degree equals the arithmetic degree of any point with Zariski dense orbit.
This generalizes an early result of Kawaguchi and Silverman \cite{KS16b} for the polarized $f$ case, i.e., $f^*|_{\NS(X)}$ is diagonalizable with all eigenvalues of the same modulus greater than $1$.
\end{abstract}

\subjclass[2020]{
14E30,   
14H30, 
14M25,  
20K30, 
32H50. 
}

\keywords{Kawaguchi-Silverman conjecture, arithmetic degree, dynamical degree, int-amplified endomorphism,  equivariant minimal model program}

\maketitle
\tableofcontents
\section{Introduction}\label{sec-intro}

We work over an algebraically closed field $\mathbf{k}$ of characteristic zero.

Let $f\colon  X \to X$ be a surjective endomorphism of a normal projective variety $X$. There are two fundamental dynamical invariants that characterize $f$ from the perspectives of topology and arithmetic.

Let \(H\) be an ample divisor of \(X\).
Then the \textit{first dynamical degree} $\delta_f$ is defined as
$$
\delta_f \coloneqq \lim_{m \to +\infty} \left( (f^m)^*H \cdot H^{\dim X - 1} \right)^{1/m}.
$$
This limit, which is independent of the choice of $H$, exists and is equal to the spectral radius of $f^*|_{\NS(X) \otimes_{\mathbb{Z}} \mathbb{C}}$, where $\NS(X)$ is the N\'eron-Severi group of $X$ (see \cite{DS04,DS05}; cf.~\cite{Dan19}).

Suppose that  $\mathbf{k}$ is the field $\overline{\mathbb{Q}}$ of algebraic numbers. 
Let $h_H$ be a Weil height function associated with an ample divisor $H$ of $X$. 
The \textit{arithmetic degree} is defined as a function on $X(\overline{\mathbb{Q}})$:
$$
\alpha_f(x) \coloneqq \lim_{m \to +\infty} \max\{1, h_H(f^m(x))\}^{1/m}. 
$$
This limit, which is also independent of the choice of $H$, exists and equals either $1$ or the modulus of some eigenvalue of $f^*|_{\NS(X) \otimes_{\mathbb{Z}} \mathbb{C}}$ (see \cite[Proposition 12]{KS16a} and \cite[Theorem 3]{KS16b}).

We refer the reader to Definitions \ref{def-dyn} and \ref{def-ari} for a detailed explanation.

\subsection{Kawaguchi-Silverman Conjecture}\label{subsec-ksc}
Our starting point in this paper is to study the \textit{Kawaguchi-Silverman Conjecture} (abbreviated as \textit{KSC}), which was proposed by Kawaguchi and Silverman \cite[Conjecture 6]{KS16b}. 
Originally formulated for dominant rational self-maps, we focus here on its version for endomorphisms.

\begin{conjecture}[Kawaguchi-Silverman Conjecture = KSC]\label{conj-KSC}
Let $f\colon X\to X$ be a surjective endomorphism of a projective variety $X$ over $\overline{\mathbb{Q}}$.
Then 
$$\alpha_f(x) = \delta_f$$ for any $x \in X(\overline{\mathbb{Q}})$ with the orbit $O_f(x) = \{f^n(x)\, |\, n\ge 0\}$ being Zariski dense
in $X$.
\end{conjecture}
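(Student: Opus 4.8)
One inequality is known in full generality: by Matsuzawa's upper bound, $\alpha_f(x)\le\delta_f$ for every surjective endomorphism $f$ of a normal projective variety over $\overline{\mathbb{Q}}$ and every point $x$; moreover $\alpha_f(x)$ is a well-defined real number equal to $1$ or to the modulus of an eigenvalue of $f^{*}|_{\NS(X)\otimes_{\mathbb{Z}}\mathbb{C}}$ (see \cite{KS16b} and the references in the Introduction). As $\alpha_f(x)\ge 1$ by definition, the conjecture is automatic when $\delta_f=1$, so we may assume $\delta_f>1$; the content is then the lower bound $\alpha_f(x)\ge\delta_f$ for $x$ with Zariski-dense orbit. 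Replacing $X$ by its normalization and spreading the data over a number field affects neither $\alpha_f$ nor $\delta_f$, and a Zariski-dense orbit stays dense, so I would reduce immediately to $X$ normal and defined over a number field.

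Next, a Perron--Frobenius argument: $f^{*}$ preserves the closed, full-dimensional, strictly convex cone $\Nef(X)\subset\NS(X)\otimes_{\mathbb{Z}}\mathbb{R}$ and acts there with spectral radius $\delta_f$, so there is a nonzero nef class $\theta$ with $f^{*}\theta\equiv\delta_f\,\theta$ (and dually a nef eigenclass for $f_{*}$ with the same eigenvalue). I would split the argument according to whether $\theta$ is big.

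If $\theta$ is big, the plan is to run the canonical-height argument of the polarized case in this wider setting. For a height $h_\theta$ attached to $\theta$, the functoriality $h_{f^{*}\theta}=h_\theta\circ f+O(1)$ together with $f^{*}\theta\equiv\delta_f\theta$ gives the canonical height $\widehat h_\theta(x)=\lim_{n\to\infty}\delta_f^{-n}h_\theta(f^{n}(x))$; the key point is that the augmented base locus $\mathbf{B}_{+}(\theta)$ is totally $f$-invariant, since $f^{-1}\mathbf{B}_{+}(\theta)=\mathbf{B}_{+}(f^{*}\theta)=\mathbf{B}_{+}(\delta_f\theta)=\mathbf{B}_{+}(\theta)$, and proper, hence disjoint from the dense orbit of $x$; so $h_\theta$ is bounded below along $O_f(x)$ and a Northcott-type argument forces $\widehat h_\theta(x)>0$, whence $\alpha_f(x)\ge\delta_f$. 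Two technical points must be handled: $\theta$ is only a real class and $f^{*}$ need not be diagonalizable, so one should work with a full generalized eigenspace and the canonical heights refined for a Jordan block (as by Kawaguchi--Silverman and Matsuzawa); and, $X$ being possibly singular, the positivity and base-locus estimates must be made intrinsically on $X$ rather than on a resolution.

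The remaining case, $\theta$ not big, is where I expect the genuine obstacle. Then $\theta$ is numerically degenerate, and one wants an $f$-equivariant fibration (after replacing $f$ by an iterate) $\pi\colon X\ratmap Y$ with $\dim Y<\dim X$ and a map $g$ on $Y$ with $\pi\circ f=g\circ\pi$, chosen so that $\delta_f$ is visible on the base, i.e.\ $\delta_g=\delta_f$: for example the numerical reduction map of $\theta$ in the sense of Nakayama (on which $\theta$ descends to a big nef eigenclass of $g^{*}$, forcing $\delta_g\ge\delta_f$, while $\delta_g\le\delta_f$ because $\pi$ semiconjugates $f$ to $g$), or a Mori fibre space coming from an equivariant minimal model program. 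Granting such $\pi$, the semiconjugacy inequality $\alpha_f(x)\ge\alpha_g(\pi(x))$ and the fact that $\pi$ carries the dense orbit of $x$ to a dense orbit on $Y$ let one induct on $\dim X$ (with trivial base cases $\dim X=0,1$) to get $\alpha_f(x)\ge\alpha_g(\pi(x))=\delta_g=\delta_f$. \emph{The difficulty is that such equivariant fibrations are not known to exist for an arbitrary, possibly very singular, $X$}: neither equivariant resolution of singularities compatible with a prescribed endomorphism nor an unconditional minimal model program is available. This is precisely where one must specialize to $f$ int-amplified and $X$ $\mathbb{Q}$-factorial klt, for which the equivariant MMP of Meng and Zhang supplies the required tower of equivariant contractions and fibrations and terminates at a $Q$-abelian variety; the only part of the induction not already covered by the big-$\theta$ argument or a lower-dimensional fibration is then an abelian variety, where KSC is the theorem of Kawaguchi--Silverman, while for the rationally-connected-type fibrations in the tower the rigidity of the family lets a relative canonical-height argument account for the part of $\delta_f$ carried by the fibres.
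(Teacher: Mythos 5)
The statement you are proving is a conjecture that is still open in general, and the paper does not claim to prove it either: its Main Theorem establishes KSC only for $\mathbb{Q}$-factorial klt varieties admitting an int-amplified endomorphism. Your proposal cannot therefore be a complete proof, and the place where it breaks down is exactly the place where the paper's actual work lies. Your reduction (upper bound $\alpha_f(x)\le\delta_f$, Perron--Frobenius nef eigenclass $\theta$ with $f^*\theta\equiv\delta_f\theta$, canonical heights when $\theta$ is big, equivariant fibrations otherwise) reproduces the state of the art up to \cite{MZ22}: running the equivariant MMP in the int-amplified setting, everything reduces to what the paper calls Case TIR, where $-K_X$ is nef but \emph{not} big, $\kappa(X,-K_X)=0$, the eigendivisor $D\sim_{\Q}-K_X$ is a single prime divisor supporting the whole ramification, and the first Fano contraction $\tau\colon X\to Y$ satisfies $\delta_f>\delta_{f|_Y}$. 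In that case your two mechanisms both fail: $\theta$ is not big (and has Iitaka dimension zero, so the Northcott-outside-$\mathbf{B}_+$ and Iitaka-fibration tricks give nothing), and the dynamical degree is \emph{not} visible on the base, so the semiconjugacy induction only yields $\alpha_f(x)\ge\delta_{f|_Y}<\delta_f$. Your closing sentence, that ``the rigidity of the family lets a relative canonical-height argument account for the part of $\delta_f$ carried by the fibres,'' is precisely the unproved assertion; no such relative canonical-height argument is known in this situation, which is why \cite{MZ22} left it as an explicit open question.

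For comparison, the paper resolves Case TIR not by heights at all but by geometry: using that the invariant reflexive sheaf $\Omega_X^{[1]}(\log D)$ becomes numerically flat and locally free on a quasi-\'etale cover (Theorem \ref{thm-invariant-sheaf}), the Druel--Lo Bianco structure theorem (Theorem \ref{thm-struc-k}), and an equivariant lifting argument (Claim \ref{claim-lift}, Lemma \ref{lem-toric-splitting-bc}), it produces an $f$-equivariant quasi-\'etale cover on which $(X,D)$ becomes a splitting toric fibration over an abelian variety (Theorem \ref{thm-tir}); after an equivariant resolution (Theorem \ref{thm-equi-lifting}) and re-running the EMMP, the anticanonical eigendivisor is forced to be \emph{reducible}, contradicting condition (A2)/(A3) of Case TIR, so the bad case never recurs and one concludes by induction together with Lemma \ref{lem-ksc-iff}. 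A lesser issue in your big-$\theta$ case: the identity $f^{-1}\mathbf{B}_+(\theta)=\mathbf{B}_+(f^*\theta)$ and Northcott finiteness for big classes away from the augmented base locus both need citation or proof (and $\theta$ is only an $\mathbb{R}$-class), but those points are repairable; the genuine gap is the non-big case described above.
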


\begin{remark}\label{rmk-known-case}
To the best knowledge of ourselves, Conjecture \ref{conj-KSC} is known to hold in the following cases. 
\begin{itemize}
\item[(i)]
$f$ is polarized (see \cite[Theorem 5]{KS16b}).
\item[(ii)]
$X$ is a smooth projective surface (see \cite[Theorem~10]{KS14} and \cite[Theorem~1.3]{MSS18}).
\item[(iii)]
$X$ is a Mori dream space
(e.g.~of Fano type; see \cite[Theorem 4.1]{Mat20a}).
\item[(iv)]
$X$ is an abelian variety
(see \cite[Corollary 32]{KS16b}, \cite[Theorem~2]{Sil17}).
\item[(v)] $X$ is a Hyperk\"ahler variety (see \cite[Theorem~1.2]{LS21}).
\item[(vi)] $X$ is a smooth rationally connected projective variety admitting an int-amplified endomorphism (see \cite{MZ22} and \cite{MY22} for threefolds and \(n\)-folds respectively).
\item[(vii)] $X$ is a smooth projective 3-fold and $\deg (f) > 1$ (see \cite{MZ23b}; cf.~\cite[Prop.~1.6]{LS21}). 
\end{itemize}
We refer to \cite{CLO22,HZ23,JSXZ21,KS14,MW22,Mat20b,Mat24,Ogu24,Xie24} for various other settings of KSC (e.g., the version for dominant rational self-maps). For a comprehensive overview of the current state of Conjecture \ref{conj-KSC}, we also direct the reader to the recent survey paper \cite{Matz23}. 
\end{remark}

Among the various advances towards the KSC, a crucial tool is the \textit{canonical height function} (cf.~\cite{CS93}), used to evaluate the arithmetic degree. 
Let $D \in \NS(X) \otimes_{\mathbb{Z}} \mathbb{R}$ be a divisor satisfying $f^*D \sim qD$ for some $q > 1$.
Then the canonical height associated to \(D\) and \(f\) is defined as
$$
\hat{h}_{D,f}(x) = \lim_{n \to +\infty} \frac{h_D(f^n(x))}{q^n}.
$$
This limit exists and satisfies
$$
\hat{h}_{D,f} \circ f = q \cdot \hat{h}_{D,f}, \quad \textup{and} \quad \hat{h}_{D,f} = h_D + O(1),
$$
where $O(1)$ denotes a bounded function (see \cite[Theorem 1.1]{CS93}). By \cite[Theorem 1.9]{MMS+22}, we can find a nef $\mathbb{R}$-Cartier divisor $D \nequiv 0$ such that $f^*D \sim_{\mathbb{R}} \delta_f D$. 
Thus, if we further assume that $\hat{h}_{D,f}(x) \neq 0$ for any given \(x\) with Zariski dense orbit,  
then we obtain that $\alpha_f(x) \ge \delta_f$ (cf.~\cite[Lemma 3.3]{MSS18}); in particular, KSC holds in this case.

When $D$ is ample, the \textit{Northcott finiteness property} (cf.~\cite[Theorem B.3.2(g)]{HS00} or \cite[Theorem 2.6]{Lan83}) asserts that the set
$$
\{ x \in X(\overline{\mathbb{Q}}) \mid [K(x):\mathbb{Q}] \leq d, \; h_D(x) \leq M \}
$$
is finite for any given $d > 0$ and $M > 0$, where $K(x)$ is the number field defining $x$. This implies that $\hat{h}_{D,f}(x) = 0$ if and only if the orbit $O_f(x)$ is finite. 
When $D$ is $\mathbb{Q}$-Cartier with the Iitaka dimension $\kappa(X,D)>0$, we have $\hat h_{D,f}(x)\neq 0$ if $O_f(x)$ is Zariski dense in $X$ by taking the Iitaka fibration; see \cite[Proposition 3.6]{Mat20a}.

\vskip 2mm
\noindent\textbf{Difficulty.}
In general, $\delta_f$ is not a rational number, so we cannot always expect the existence of a $\mathbb{Q}$-Cartier eigenvector $D$. Moreover, even if such a $\mathbb{Q}$-Cartier divisor exists, it does not necessarily have positive Iitaka dimension. 
As a result, the machinery of canonical heights does not function as effectively as it does in the case of polarized endomorphisms (cf.~\cite[Question 10]{KS16b}).

\subsection{Main result}
A surjective endomorphism $f$ of a normal projective variety $X$ is called \textit{int-amplified} if \( f^*H - H \) is ample for some ample divisor $H$ (cf.~Definition \ref{def-endo}). Alternatively, this is equivalent to having \(\iota_f > 1\), where \(\iota_f\) denotes the smallest modulus of the eigenvalues of \(f^*|_{\NS(X) \otimes_{\mathbb{Z}} \mathbb{C}}\) (see \cite[Theorem 1.1]{Men20}). It is also equivalent to being \textit{\((\dim X)\)-cohomologically hyperbolic}, i.e., the last dynamical degree is strictly larger than the other dynamical degrees (see \cite[Proposition 3.7]{MZ23}).

Int-amplified endomorphisms have gradually emerged as key tools in the study of general endomorphisms, particularly in the reduction of a non-isomorphic endomorphism to a polarized endomorphism. For several applications in different contexts, we refer the reader to \cite{JXZ23} and \cite{MZZ22}.

Zhang and the first author \cite{MZ22} made the initial attempt to prove KSC for int-amplified endomorphisms using the theory of \textit{equivariant minimal model program} (EMMP); 
see \cite{Men20, MZ20} for further details. The program progressed smoothly until a specific obstruction emerged, referred to as \hyperlink{Case TIR}{Case TIR}. Roughly speaking, TIR stands for ``totally invariant ramification'', but it involves additional geometric and dynamical constraints, as detailed below.

\hypertarget{Case TIR}{\vskip 2mm}
\noindent{\textbf{Case TIR}$_n$} (Totally Invariant Ramification case).
Let $X$ be a normal projective variety of dimension $n \ge 1$, which has only $\Q$-factorial Kawamata log terminal (klt) singularities and admits one int-amplified endomorphism.
Let $f\colon X\to X$ be an arbitrary surjective endomorphism.
Moreover, we impose the following conditions.
\begin{itemize}
\item[(A1)]
The anti-Kodaira dimension $\kappa(X,-K_X)=0$; $-K_X$ is nef, whose class is extremal in both the {\it nef cone} $\Nef(X)$ and the {\it pseudo-effective divisors cone} $\PE^1(X)$.
\item[(A2)]
$f^*D = \delta_f D$ for some prime divisor $D\sim_{\Q} -K_X$.
\item[(A3)]
The ramification divisor of $f$ satisfies $\Supp \, R_f = D$.
\item[(A4)]
There is an $f$-equivariant Fano contraction $\tau\colon X\to Y$ with $\delta_f>\delta_{f|_Y}$ ($\ge 1$).
\end{itemize}

\begin{theorem}[{see \cite[Theorem 1.7]{MZ22}}]\label{theorem-tir}
Let $X$ be a $\Q$-factorial klt projective variety admitting an int-amplified endomorphism.
Then we have:
\begin{itemize}
\item[(1)] If $K_X$ is pseudo-effective, then KSC holds for any surjective endomorphism of $X$.
\item[(2)] Suppose that KSC holds for \hyperlink{Case TIR}{Case TIR} 
(for those $f|_{X_i} \colon  X_i \to X_i$ appearing in any equivariant MMP starting from $X$).
Then KSC holds for any (not necessarily int-amplified) surjective endomorphism $f$ of $X$.
\end{itemize}
\end{theorem}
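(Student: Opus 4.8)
The plan is to argue by induction on $n=\dim X$, replacing $f$ by a suitable positive power throughout (this affects neither the truth of KSC nor, up to passing to an iterate of the point, the density of an orbit), so that every relevant minimal model program step becomes $f$-equivariant (\cite{Men20,MZ20}); we may also assume $\delta_f>1$, since otherwise $\alpha_f(x)\le\delta_f=1\le\alpha_f(x)$ for all $x$. For part~(1): when $K_X$ is pseudo-effective, the structure theory of $\Q$-factorial klt varieties carrying an int-amplified endomorphism forces $K_X\sim_{\Q}0$ and shows that $X$ is $Q$-abelian, i.e.\ admits a finite quasi-\'etale cover $\pi\colon A\to X$ by an abelian variety (\cite{Men20,MZ20}); after a further power $f$ lifts to a surjective endomorphism of $A$ (\cite{MZ20}), and since $\delta_\bullet$ and $\alpha_\bullet(\cdot)$ transform correctly under equivariant generically finite morphisms, KSC for $f$ reduces to KSC for abelian varieties, which holds (Remark~\ref{rmk-known-case}(iv), \cite{KS16b,Sil17}). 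So from now on assume $K_X$ is not pseudo-effective.

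For part~(2), run an $f$-equivariant $K_X$-minimal model program $X=X_0\dashrightarrow\cdots\dashrightarrow X_m=X'$; since $K_X$ is not pseudo-effective it terminates with a Fano contraction $\tau\colon X'\to Y$ satisfying $\rho(X'/Y)=1$ and $-K_{X'}$ $\tau$-ample, and each $X_i$ (and $Y$) is again $\Q$-factorial klt and admits an int-amplified endomorphism (\cite{Men20,MZ20}). KSC is compatible with the individual steps of this program: for a divisorial contraction $X_i\to X_{i+1}$ the image of a point with Zariski dense orbit again has Zariski dense orbit, and since $\delta_{f|_{X_i}}=\delta_{f|_{X_{i+1}}}$ while pulling back height functions gives $\alpha_{f|_{X_i}}(x)\ge\alpha_{f|_{X_{i+1}}}(\textup{image})$, KSC for $f|_{X_{i+1}}$ forces $\alpha_{f|_{X_i}}(x)=\delta_{f|_{X_i}}$ in view of the general bound $\alpha_{f|_{X_i}}(x)\le\delta_{f|_{X_i}}$ (\cite[Proposition~12]{KS16a}); flips are handled by the same mechanism after noting, by a standard argument, that an orbit cannot be eventually trapped in the (codimension $\ge2$) flipping locus without contradicting density (cf.\ \cite{MSS18,Mat20a,MZ22}). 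Hence it suffices to prove KSC for $f'\coloneqq f|_{X'}$, and more precisely to treat the Mori fibre space $\tau\colon X'\to Y$.

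On $\NS(X'/Y)_{\R}\cong\R$ the operator $f'^*$ acts by a real scalar $\mu\ge1$, and by the product formula for relative dynamical degrees $\delta_{f'}=\max(\delta_{f'|_Y},\mu)$. If $\delta_{f'}=\delta_{f'|_Y}$, then $Y$ is a lower-dimensional $\Q$-factorial klt variety with an int-amplified endomorphism, so by the induction hypothesis KSC holds for $f'|_Y$; as $\tau(x)$ has Zariski dense orbit whenever $x$ does and $\alpha_{f'}(x)\ge\alpha_{f'|_Y}(\tau(x))$, we obtain $\alpha_{f'}(x)\ge\delta_{f'|_Y}=\delta_{f'}$, hence equality. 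If instead $\delta_{f'}>\delta_{f'|_Y}$, then $\mu=\delta_{f'}$, and by \cite[Theorem~1.9]{MMS+22} there is a nef $\R$-Cartier divisor $D\nequiv0$ with $f'^*D\sim_{\R}\delta_{f'}D$. Intersecting with a fibre curve and using $\mu=\delta_{f'}>\delta_{f'|_Y}$ shows $D$ is not numerically pulled back from $Y$, so (as $\rho(X'/Y)=1$) $D$ is $\tau$-ample; in particular $D$ is ample when $\dim Y=0$. If $D$ is ample, or more generally if $\kappa(X',D)>0$, then the canonical height satisfies $\widehat h_{D,f'}(x)\ne0$ for every $x$ with Zariski dense orbit — by the Northcott property, respectively by the Iitaka-fibration argument of \cite[Proposition~3.6]{Mat20a} — whence $\alpha_{f'}(x)\ge\delta_{f'}$ and KSC holds.

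The only remaining possibility is $\kappa(X',D)=0$ with $D$ nef, $\tau$-ample and non-big; showing that this configuration is genuinely \hyperlink{Case TIR}{Case TIR} is the heart of the matter and the principal obstacle. Combining $f'^*D\sim_{\R}\delta_{f'}D$ with the ramification identity $K_{X'}=f'^*K_{X'}+R_{f'}$ and the relation $f'^*(-K_{X'})\equiv_{\tau}\delta_{f'}(-K_{X'})$ on $\NS(X'/Y)_{\R}$ yields $R_{f'}\equiv_{\tau}(\delta_{f'}-1)(-K_{X'})$; one then analyses the vertical part of $R_{f'}$ (an $f'|_Y^*$-equivariant class pulled back from $Y$) and uses the rigidity $\kappa(X',D)=0$ together with the extremal-ray structure of $\tau$ to conclude that $D$ is $\Q$-linearly equivalent to a single prime divisor supported on $R_{f'}$, that $-K_{X'}$ is nef with $\kappa(X',-K_{X'})=0$ and extremal in $\Nef(X')$ and $\PE^1(X')$ (being an eigenvector for the spectral-radius eigenvalue of the cone-preserving operator $f'^*$), and that $\Supp R_{f'}=D$. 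These are precisely conditions (A1)--(A4), so the hypothesis that KSC holds in \hyperlink{Case TIR}{Case TIR} for the varieties appearing in the equivariant minimal model programs emanating from $X$ — a class that also contains the lower-dimensional bases $Y$ produced along the induction — completes the proof. Apart from this last step, the points needing care are the height comparisons across flipping loci in part~(2) and the descent of KSC along the quasi-\'etale cover in part~(1), both of which are by now routine.
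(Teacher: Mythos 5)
First, a point of orientation: the paper does not prove this theorem at all — it is quoted verbatim from \cite[Theorem 1.7]{MZ22} and used as a black box. So your proposal is being measured against the argument of that earlier paper rather than against anything in the present one. Your skeleton is the correct one and matches \cite{MZ22}: part (1) via $K_X\equiv 0$, $Q$-abelianity, equivariant lifting to the abelian cover and Lemma \ref{lem-ksc-iff}(1); part (2) via the $f$-equivariant MMP of \cite{Men20,MZ20}, invariance of $\delta$ and comparison of $\alpha$ across birational steps, induction on dimension through the base of the Fano contraction when $\delta_{f'}=\delta_{f'|_Y}$, and the nef eigendivisor of \cite[Theorem 1.9]{MMS+22} plus canonical heights when $\delta_{f'}>\delta_{f'|_Y}$.

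The genuine gap is in your last paragraph. The entire content of part (2) is the assertion that the residual configuration satisfies \emph{all} of (A1)--(A4), and your treatment of this is a single sentence of the form ``one then analyses \dots and uses \dots to conclude,'' which is not a proof. Three points in particular are nontrivial and are where \cite{MZ22} does its real work. (i) Rationality and primeness: a priori $\delta_{f'}$ need not be rational and $D$ need not be a $\Q$-divisor, which is exactly the ``Difficulty'' the paper highlights; condition (A2) requires $f'^*D=\delta_{f'}D$ for a \emph{prime} divisor $D\sim_{\Q}-K_{X'}$, and showing that $\kappa(X',D)=0$ forces the nef eigenclass to be proportional to a single prime component of $R_{f'}$ (whence $\delta_{f'}\in\Z$, being a local ramification index) is a substantive argument, not a corollary of ``rigidity.'' (ii) Extremality of $-K_{X'}$ in both $\Nef(X')$ and $\PE^1(X')$ does not follow merely from being a spectral-radius eigenvector of a cone-preserving operator (Perron--Frobenius gives existence of an eigenvector on the boundary or in the cone, not extremality of a given one); one must rule out a decomposition of the class, which again uses $\kappa=0$ and the equivariance. (iii) $\Supp R_{f'}=D$ exactly: you must exclude further horizontal or vertical components of the ramification divisor, which requires the analysis of the induced endomorphism of $Y$ and the fact that vertical eigencomponents would contribute to $\delta_{f'|_Y}$ or to positivity of $\kappa$. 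Until these are supplied, the reduction to Case TIR — i.e.\ the theorem itself — is not established. The rest of your write-up (the flip/height comparisons and the descent in part (1)) is indeed routine and correctly handled.
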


In \cite[Theorem 8.6]{MZ22} and subsequently in \cite[Theorem 6.6]{MMSZ23}, it was shown that \hyperlink{Case TIR}{Case TIR} is not possible when \(\dim X - \dim Y = 1\) or when \(\dim X - \dim Y = 2\) and \(X\) has terminal singularities. Building on this, KSC was proved for any surjective endomorphism of a smooth projective threefold that admits an int-amplified endomorphism; see \cite[Theorem 1.5]{MMSZ23}. As mentioned earlier, this result further contributed to proving KSC for any non-isomorphic surjective endomorphism of a smooth projective threefold without the requirement of int-amplified endomorphisms; see \cite[Theorem 1.7]{MZ23}. The general question regarding \hyperlink{Case TIR}{Case TIR} is posed in \cite[Question 1.8]{MZ22}.

\begin{question}\label{question-tir}
	Does there exist $f\colon  X \to X$ satisfying \hyperlink{Case TIR}{Case TIR} (plus, if necessary, that $X$ is rationally connected as defined below)?
	If such $f$ exists,
	does it satisfy KSC?
\end{question}

In this paper, we provide a complete answer to the second part of Question \ref{question-tir}: KSC holds for \hyperlink{Case TIR}{Case TIR} (cf.~Remark \ref{rem-non-existence-tir}). Thus, we can now state our main result.

\begin{maintheorem}\label{main-theorem-ksc}
Let $X$ be a $\mathbb{Q}$-factorial klt projective variety admitting an int-amplified endomorphism.
Then the Kawaguchi-Silverman conjecture holds for any surjective endomorphism $f$ of $X$.
\end{maintheorem}

We note that our main theorem extends \cite{KS16b} for polarized endomorphisms and \cite{MY22} for smooth rationally connected projective varieties (see Remark \ref{rmk-known-case} (i) and (vi)).

\subsection{Strategy: reducibility and equivariancy}\label{subsec-strategy}
In the remainder of the introduction, we will briefly outline our strategy.

We begin with \hyperlink{Case TIR}{Case TIR}. Our approach involves either finding an alternative EMMP that avoids \hyperlink{Case TIR}{Case TIR} or ensuring that the first Fano contraction in the EMMP does not satisfy \hyperlink{Case TIR}{Case TIR} by verifying that the divisor $D$ is reducible. 
To achieve this, we might need to replace the original variety $X$ with a finite cover that is equivariant with respect to both $f$ and an int-amplified endomorphism. 
Utilizing such a special cover, we provide a relatively simple geometric description of the highly restrictive \hyperlink{Case TIR}{Case TIR}. This approach extends previous works \cite{HN11, MZ19, MZg23, MYY24} to the case of singular vibrations. 
For relevant terminology, we refer to Definition \ref{defn-locally-trivial}, and for a detailed comparison with previous works and a further question, see Remark \ref{rmk-comparison} and Question \ref{ques}, respectively.

Let $X$ be a normal projective variety and $D$ a reduced divisor of $X$.
Denote by $\SEnd(X,D)$ the monoid of surjective endomorphisms $f$ of $X$ with $f^{-1}(D)=D$.

\begin{theorem}[Equivariant toric cover]\label{thm-tir}
Let $X$ be a klt projective variety and $D$ a reduced divisor such that $K_X+D\equiv 0$.
Suppose $\SEnd(X,D)$ contains an int-amplified endomorphism. 
Then for any $f\in \SEnd(X,D)$, there exists a quasi-\'etale cover \(\pi\colon \widehat{X}\to X\) satisfying: 
\begin{enumerate}
\item the pair \((\widehat{X}, \pi^*D)\) admits a splitting toric fibration over an abelian variety \(A\),
\item there exists $\widetilde{f}\in \SEnd(\widehat{X}, \pi^*D)$ such that $\pi\circ\widetilde{f}=f^s\circ \pi$ for some $s>0$, and 
\item $\SEnd(\widehat{X},\pi^*D)$ contains an int-amplified endomorphism.  
\end{enumerate}
\end{theorem}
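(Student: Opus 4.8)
The plan is to produce the quasi-\'etale cover $\pi\colon\widehat X\to X$ in two stages, first trivializing the ``abelian part'' and then the ``toric part'' of the pair $(X,D)$, propagating $f$-equivariance through both.

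First I would run the equivariant MMP for the klt pair $(X,D)$ with respect to the int-amplified endomorphism guaranteed in $\SEnd(X,D)$; since $K_X+D\equiv 0$, every extremal ray is $(K_X+D)$-trivial, so the MMP terminates with a minimal model on which $K_X+D\equiv 0$ and, after passing to the Albanese (or the nef reduction / MRC-type) fibration, one obtains an $f^{s_1}$-equivariant contraction $g\colon X\to A$ onto an abelian variety $A$ with general fiber $F$ on which $(F,D|_F)$ is a klt log Calabi--Yau pair of Fano type; this is where one invokes the structure theory already cited in the excerpt (the works \cite{HN11,MZ19,MZg23,MYY24} adapted to singular fibrations, together with the EMMP of \cite{Men20,MZ20}). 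The base change by a suitable isogeny of $A$ and a finite quasi-\'etale cover along the discriminant of $g$ makes the fibration locally trivial in the \'etale topology; this uses that an int-amplified endomorphism descends to an int-amplified endomorphism of $A$ (a multiplication-by-$m$ map up to isogeny), so that after replacing $f$ by an iterate there is no monodromy to obstruct the splitting. The outcome of stage one is a cover $\pi_1\colon X_1\to X$, an endomorphism $\widetilde f_1\in\SEnd(X_1,\pi_1^*D)$ with $\pi_1\circ\widetilde f_1=f^{s_1}\circ\pi_1$, a splitting $X_1\to A_1$, and an int-amplified endomorphism still present in $\SEnd(X_1,\pi_1^*D)$ (pulled back and composed with multiplication on $A_1$).

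Next I would treat the fiber pair $(F,D|_F)$: it is a $\mathbb Q$-factorial klt log Calabi--Yau pair admitting an int-amplified endomorphism preserving $D|_F$, so by the characterization of toric pairs under int-amplified (equivalently, polarized-on-the-total-space) endomorphisms — the key input being a version of the theorem that a klt log CY Fano-type variety with $f^{-1}(D)=D$, $\operatorname{Supp}R_f\subseteq D$ and an int-amplified $f$ is a toric variety with $D$ the reduced boundary, in the spirit of \cite{Men20,MZZ22} — one gets, after a further quasi-\'etale cover of $F$ (a cyclic cover along components of $D|_F$ to kill the class-group torsion), that $(F,D|_F)$ becomes a toric pair. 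Spreading this cover out over $A_1$ (possible after shrinking and another isogeny base change, since the toric structure is rigid and the relevant finite cover is canonical, hence monodromy-invariant after an iterate), I obtain $\pi_2\colon\widehat X\to X_1$ with $(\widehat X,\pi_2^*\pi_1^*D)$ a splitting toric fibration over an abelian variety $A=A_1$ (or a further isogeny quotient), completing (1). Equivariance (2) is assembled by composing: $\widetilde f:=\widetilde f_2$ on $\widehat X$ satisfies $\pi\circ\widetilde f=f^s\circ\pi$ with $\pi=\pi_1\circ\pi_2$ and $s=s_1 s_2$, because at each stage the relevant cover is unique with its combinatorial/isogeny data and hence $f$-invariant after replacing $f$ by an iterate; property (3) follows since an int-amplified endomorphism of $X$ lifts (after an iterate) through any quasi-\'etale $f$-equivariant cover and remains int-amplified as its eigenvalue moduli on $\operatorname{NS}$ are unchanged under quasi-\'etale pullback.

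The main obstacle I expect is the equivariance and monodromy bookkeeping in the second stage: one must produce a single finite cover of the total space $\widehat X$ that simultaneously trivializes the toric structure of the fibers \emph{and} is preserved (up to iterate) by the lifted $f$, even though $f$ may permute the boundary components $D_i$ and act nontrivially on the torus factor; controlling this requires showing the relevant finite abelian cover — built from the torsion of $\operatorname{Cl}(F)$ and the lattice data — is canonical enough to be monodromy- and $f$-stable after replacing $f$ by a high power, which is exactly the point where the ``splitting'' (as opposed to merely ``locally trivial'') conclusion and the passage to an isogeny of $A$ are essential. A secondary technical point is ensuring all covers stay quasi-\'etale: the cyclic covers along components of $D$ are ramified only over $D$, so on $X\setminus D$ they are \'etale, and one checks the branch locus is contained in the non-$\mathbb Q$-factorial/singular locus in codimension $\ge 2$ after the MMP, preserving the klt condition on $(\widehat X,\pi^*D)$.
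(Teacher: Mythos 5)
There are genuine gaps here, and they sit exactly at the points your proposal treats as routine. First, your stage one does not get off the ground as described: since $K_X+D\equiv 0$, there is no $(K_X+D)$-MMP to run (no negative extremal rays), and the Albanese map of $X$ itself need not be the desired fibration before any cover is taken --- $q(X)$ can be strictly smaller than $\widetilde q(X)$, so the abelian base only appears \emph{after} passing to a quasi-\'etale cover maximizing the irregularity. The paper's order of operations is the reverse of yours: it first uses the int-amplified $\mathcal{I}$ (quasi-\'etale off $D$ because $K_X+D=\mathcal{I}^*(K_X+D)$) to show $\Omega_X^{[1]}(\log D)$ is $\mathcal{I}$-invariant, deduces from Theorem \ref{thm-invariant-sheaf} that it becomes numerically flat and locally free on a quasi-\'etale cover, and only then invokes the Druel--Lo Bianco structure theorem (Theorem \ref{thm-struc-k}) to conclude that the Albanese of the cover is a toric fibration. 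Your stage two instead invokes a ``klt log CY pair with int-amplified endomorphism and $\Supp R_f\subseteq D$ is toric'' characterization for the fiber; but that characterization is precisely what is unavailable when the variety is klt but not smooth in codimension two (the second-Chern-class/Simpson route breaks down, as Remark \ref{rmk-comparison}(i)--(ii) explains), so this step begs the question the theorem is designed to answer. You would also need the fiber to inherit an int-amplified endomorphism, which requires an argument (restriction to a periodic fiber) you do not supply.

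Second, the equivariance bookkeeping you flag as the ``main obstacle'' is not resolved by your appeal to canonicity. The cover is \emph{not} canonical: Remark \ref{rmk-comparison}(iv) stresses that, unlike the $\widetilde q=0$ case where $\pi_1^{\alg}(X_{\reg})$ is finite and a universal cover exists, here $\widehat X$ depends on both $f$ and $\mathcal{I}$. An iterate of $f$ will fix \emph{some} cover in the finite set of candidates, and an iterate of $\mathcal{I}$ will fix some (possibly different) cover; to obtain conclusions (2) and (3) simultaneously you need a single cover to which $f^s$ lifts \emph{and} which carries an int-amplified endomorphism. The paper achieves this with the combinatorial Lemma \ref{lem-lifting-fg} applied to the two self-maps $\sigma,\tau$ induced by $f$ and $\mathcal{I}$ on the finite set $\mathcal{S}_m$ of minimal-degree covers, and crucially by manufacturing a \emph{new} int-amplified endomorphism $\mathcal{I}'=g\circ\mathcal{I}^k$ with $g=f^s\circ(\mathcal{I}^s\circ f^s)^t$ that does lift (int-amplifiedness of such composites coming from \cite[Theorem 1.4]{Men20}). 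Your assertion that ``an int-amplified endomorphism of $X$ lifts (after an iterate) through any quasi-\'etale $f$-equivariant cover'' is exactly the unproved step; without the finite-set argument and the word $\mathcal{I}'$ in $f$ and $\mathcal{I}$, conclusion (3) is not established.
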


It is worth noting that the condition \( K_X + D \equiv 0 \) in Theorem \ref{thm-tir} is equivalent to \(X\) admitting an int-amplified endomorphism \(\mathcal{I}\) such that \(\mathcal{I}^{-1}(D) = D\) and \(\mathcal{I}|_{X \setminus D}\) is quasi-\'etale, i.e., \( K_X + D = \mathcal{I}^*(K_X + D) \) (see \cite[Theorem 1.1]{Men20}).

The proof of Theorem \ref{thm-tir} involves three key ingredients: (1) the positivity of a dynamically stable reflexive sheaf, as discussed in Theorem \ref{thm-invariant-sheaf}; (2) the equivariant lifting to a suitable quasi-\'etale cover, detailed in Claim \ref{claim-lift} and Lemma \ref{lem-toric-splitting-bc}; and (3) a structure theorem established by Druel and Lo Bianco \cite[Theorem 6.1]{DLB22} (cf.~Theorem \ref{thm-struc-k}) for complex klt projective varieties with numerically flat and locally free logarithmic differential sheaves.

\begin{remark}\label{rmk-comparison}
We provide several remarks on the literature review of Theorem \ref{thm-tir}:
\begin{enumerate}
\item[(i)]  We briefly explain the previous approach to Theorem \ref{thm-tir}(1) when \(X\) is \textbf{of Fano type and smooth in codimension two}. As first observed in \cite{HN11} and later generalized in \cite{MZ19} and \cite{MZg23}, in this scenario, \(\Omega_X^{[1]}(\textup{log}\,D)\) has vanishing first and second Chern classes. Together with slope semistability, this allows one to conclude by a generalization of Simpson's correspondence; see \cite[Theorem 1.20]{GKP16}. 
\item[(ii)] If the assumption that \(X\) is smooth in codimension two is removed, one can still define the second orbifold \(Q\)-Chern class, as klt singularities are quotient singularities away from a codimension three closed subset (see, e.g., \cite[Section 3]{GKPT19}). However, it is unclear whether the Mumford construction is functorial with respect to an endomorphism \(f\). Additionally, verifying \(f^*\widehat{c}_2(\Omega_X^{[1]}(\textup{log}\,D)) = \widehat{c}_2(\Omega_X^{[1]}(\textup{log}\,D))\) as a multilinear form on \(\textup{N}^1(X)^{\times (n-2)}\) seems to be problematic.
\item[(iii)] In the recent papers \cite{MYY24,MYY24b}, Moraga, Y\'a\~nez, and Yeong obtained a similar result to Theorem \ref{thm-tir} using a completely different approach. Additionally, assuming the numerical flatness of the logarithmic tangent bundle of a smooth projective variety in positive characteristic, Ejiri and Yoshikawa obtained a result similar to \cite[Theorem 1.1]{DLB22}; see \cite[Theorem 1.4]{EY23}.
\item[(iv)] When \(X\) has vanishing augmented irregularity (Definition \ref{defn-irreg}), \cite[Lemma 5.1]{MZ19} shows that there exists a universal cover (which is a toric variety) to which any surjective endomorphism can be lifted. This is primarily due to the finiteness of the algebraic fundamental group \(\pi_1^{\alg}(X_{\reg})\). However, in our relative case, the algebraic fundamental group is no longer finite, and we could not identify a canonical finite cover to which \(f\) lifts. Instead, the model \(\widehat{X}\) in Theorem \ref{thm-tir} depends significantly on both the choice of \(f\) and the int-amplified endomorphism \(\mathcal{I}\). Moreover, the int-amplified endomorphism of \(\widehat{X}\) is derived from another int-amplified endomorphism generated by \(f\) and \(\mathcal{I}\).
\end{enumerate}
\end{remark}

With the assistance of Theorem \ref{thm-tir}, we achieve a geometric structure for \(\widehat{X}\) that is quite favorable. However, this comes at the cost that \(\widehat{X}\) may not necessarily retain the mild singularities of \(X\), particularly its \(\mathbb{Q}\)-factoriality. Therefore, it is necessary to further resolve the singularities in an equivariant manner, as outlined in the following theorem. Note, however, that an equivariant resolution may not always exist; for example, see \cite[Section 7]{MY21}.

\begin{theorem}[Equivariant resolution; see Theorem \ref{thm-equi-lifting-general} for a more general form]\label{thm-equi-lifting}
Let \(X\) be a normal projective variety and \(D\) a reduced divisor.
Let \(\pi\colon (X,D)\to Y\) be a splitting toric fibration over an abelian variety $Y$. 
Then there exists a resolution $\sigma\colon  \widetilde{X}\to X$ such that for any $f\in \SEnd(X,D)$ with $f^*D=qD$, there exists some $\widetilde{f}\in \SEnd(\widetilde{X},\sigma^{-1}(D))$ with $\sigma\circ \widetilde{f}=f^s\circ \sigma$ for some $s>0$.
\end{theorem}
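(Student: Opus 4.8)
\textbf{Proof proposal for Theorem \ref{thm-equi-lifting}.}

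The plan is to construct the resolution $\sigma\colon \widetilde X\to X$ \emph{canonically} out of the toric fibration structure, so that its canonicity automatically forces equivariance for every $f\in\SEnd(X,D)$ (up to replacing $f$ by a power $f^s$). Recall that $\pi\colon (X,D)\to Y$ being a splitting toric fibration over the abelian variety $Y$ means, locally on $Y$ in the \'etale (or analytic) topology, $X$ is a product $Z\times Y$ with $(Z,D_Z)$ a toric pair and $D=D_Z\times Y$; more precisely there is a finite \'etale cover $Y'\to Y$ such that the base change $(X\times_Y Y', D\times_Y Y')$ is $(Z\times Y',\, D_Z\times Y')$ for a fixed toric pair $(Z,D_Z)$, and the splitting is compatible with the gluing data which is encoded by a homomorphism from $\pi_1(Y)$ (or a translation action of $Y$) into $\Aut(Z,D_Z)$ — a subgroup of the torus, since $(Z,D_Z)$ is toric with boundary $D_Z$ the full toric boundary up to the part coming from $-K$. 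First I would take a toric resolution $\rho\colon \widetilde Z\to Z$ of the toric pair $(Z,D_Z)$: this is obtained by a canonical (e.g.\ the canonical subdivision, or any fixed functorially-chosen regular subdivision) refinement of the fan of $Z$, hence is equivariant for the \emph{entire} automorphism group $\Aut(Z,D_Z)$ and for every toric (or $D_Z$-preserving) self-map of $Z$. Because the subdivision is chosen canonically, it is in particular stable under the finitely many transition automorphisms defining the splitting of $X$ over $Y$; therefore the fibrewise resolutions glue to a global resolution $\sigma\colon \widetilde X\to X$ which is again a splitting toric fibration $\widetilde\pi\colon (\widetilde X,\sigma^{-1}(D))\to Y$ with fibre $(\widetilde Z,\rho^{-1}(D_Z))$, and $\sigma^{-1}(D)$ is a simple normal crossing divisor over each point of $Y$.

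Next I would lift $f$. Given $f\in\SEnd(X,D)$ with $f^*D=qD$, the fibration $\pi$ is $f$-equivariant after replacing $f$ by a power: indeed $\pi$ is (a twist of) the maximal rationally connected fibration / the Albanese-type map, which is functorial, so some $f^s$ descends to a surjective endomorphism $g$ of $Y$, an abelian variety, i.e.\ $g$ is a composition of an isogeny and a translation. Pulling back the splitting along $g$ and comparing with the original splitting, the self-map $f^s$ is, fibrewise and after the \'etale base change trivializing the fibration, given by a $D_Z$-preserving surjective endomorphism $\varphi$ of $Z$ together with translation/isogeny data on $Y'$; here I use that a surjective endomorphism of a toric pair $(Z,D_Z)$ preserving the full boundary is itself toric (a monomial map), by a standard rigidity argument (e.g.\ it must preserve the unique open torus orbit $Z\setminus D_Z$, hence restricts to a group endomorphism of the torus up to translation, which is killed by a further power). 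A toric endomorphism $\varphi$ of $(Z,D_Z)$ lifts uniquely to a toric endomorphism $\widetilde\varphi$ of $(\widetilde Z,\rho^{-1}(D_Z))$ \emph{provided} the fan subdivision we chose is stable under the linear map $N\to N$ induced by $\varphi$ — this is exactly why the canonical choice of subdivision matters, and if a single such subdivision is not simultaneously stable under $\varphi$ one replaces $f$ by a higher power so that the induced lattice map has the required stability (only finitely many cones are involved, so some power works). Then $\widetilde\varphi$ together with the (already lifted, since $Y$ is unchanged) data on $Y$ glue to $\widetilde f\in\SEnd(\widetilde X,\sigma^{-1}(D))$ with $\sigma\circ\widetilde f=f^s\circ\sigma$.

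The main obstacle I anticipate is the \emph{gluing / canonicity} step rather than the toric combinatorics: one must ensure that a \emph{single} resolution $\sigma$ works for \emph{all} $f\in\SEnd(X,D)$ simultaneously (the statement quantifies over $f$ after fixing $\sigma$), so the subdivision of the fibre fan cannot be chosen depending on any particular $f$; it must be invariant under the whole monoid of lattice maps arising from $\SEnd(X,D)$ as well as under the automorphism data of the splitting. I would handle this by taking the \emph{canonical} (most economical equivariant, e.g.\ the barycentric-type or the canonical-singularities) subdivision, which is functorial for all finite surjective morphisms of toric varieties preserving the boundary and hence automatically invariant under everything in sight; the price is that equivariance of $\widetilde f$ holds only for a power $f^s$, which is exactly what the statement allows. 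A secondary technical point is descending $f$ to $Y$ and trivializing the fibration compatibly over an \'etale cover of $Y$ that is preserved (up to replacing $f$ by a power) — this is routine because $\Aut(Z,D_Z)$ modulo its torus part is finite and $\pi_1^{\mathrm{alg}}$-type arguments control the finitely many relevant covers; I would cite the structure of splitting fibrations from the earlier part of the paper (Definition \ref{defn-locally-trivial} and Theorem \ref{thm-tir}) for the precise form of the gluing data. Finally, checking $\widetilde f\in\SEnd(\widetilde X,\sigma^{-1}(D))$, i.e.\ $\widetilde f^{-1}(\sigma^{-1}(D))=\sigma^{-1}(D)$, is immediate since $\sigma^{-1}(D)$ is the toric boundary fibrewise and toric maps preserve it, and $\sigma\circ\widetilde f=f^s\circ\sigma$ determines $\widetilde f$ uniquely on the dense open set where $\sigma$ is an isomorphism, giving the desired commutativity on all of $\widetilde X$ by separatedness.
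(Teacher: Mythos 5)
Your overall strategy (resolve the fibres torically in a uniform way, then lift $f$ using the toric nature of the fibrewise maps) is in the same spirit as the paper, but the lifting step has a genuine gap. You reduce the existence of $\widetilde f$ to the requirement that the chosen fan subdivision of the fibre be stable under the lattice map $\phi_\varphi\colon \mathbf{N}\to\mathbf{N}$ induced by the fibrewise endomorphism $\varphi$, and you offer two fixes: a ``canonical'' subdivision functorial for all boundary-preserving finite surjective toric morphisms, and, failing that, replacing $f$ by a power. Neither is justified: standard canonical subdivisions (barycentric, crepant, etc.) are \emph{not} compatible with arbitrary lattice endomorphisms of the coarse fan, and for a non-scalar $\phi_\varphi$ no power need stabilize a given rational refinement. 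The point you are missing is the paper's Lemma \ref{lem-toric-power}: because $f^*D=qD$ and the fibration is splitting, after iteration one has $f^*D_i=qD_i$ for \emph{every} irreducible component with the \emph{same} $q$, so $\phi_{\varphi,\mathbb R}$ fixes every ray of the (complete) fan and acts on it by multiplication by $q$; hence $\phi_\varphi=q\,\id_{\mathbf N}$. A scalar lattice map is compatible with \emph{every} fan (Lemma \ref{lem-toric-ext1}), so the lift exists for \emph{any} toric resolution of the fibre and the whole issue of choosing an invariant subdivision evaporates. Without this rigidity your argument does not close; this is also where the hypothesis $f^*D=qD$ (which the paper remarks cannot be dropped) actually enters, and your proposal never uses it in an essential way.

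A secondary inaccuracy: you assert that the splitting toric fibration becomes a product $(Z\times Y', D_Z\times Y')$ after a finite \'etale cover $Y'\to Y$. This is false in general --- the gluing cocycle takes values in the big torus of the fibre and can involve non-torsion translations (e.g.\ $\mathbb P(\OO\oplus L)$ over an elliptic curve with $L$ non-torsion, $D$ the two sections), so no finite cover trivializes it. The paper instead proves Zariski local triviality (Theorem \ref{thm-toric-trivial}) and builds the resolution globally by blowing up the strata $D_I=\bigcap_{i\in I}D_i$, which are intrinsically defined subvarieties of $X$; this replaces your gluing-of-local-resolutions step and avoids any appeal to a trivializing cover. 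The fibrewise map is then written Zariski-locally as $(x,y)\mapsto(\alpha(y)\cdot h(x),g(y))$ with $h(t)=(t_1^q,\dots,t_n^q)$, and Lemma \ref{lem-toric-ext2} extends it over the resolved fibres. Your handling of the descent of $f$ to $Y$ and of $\widetilde f^{-1}(\sigma^{-1}(D))=\sigma^{-1}(D)$ is fine.
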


With the help of Theorem \ref{thm-equi-lifting}, we may continue to run EMMP:
 
$$\xymatrix{
\widetilde{X}=\widetilde{X}_1\ar@{-->}[r]^{\widetilde{\pi}_1}& \widetilde{X}_2\ar@{-->}[r]^{\widetilde{\pi}_2} &\cdots\ar@{-->}[r]^{\widetilde{\pi}_{s-1}} &\widetilde{X}_{s}\ar[r]^{\widetilde{\tau}} &\widetilde{Y}
}$$
Here, each \(\widetilde{\pi}_i\) is a birational map, and \(\widetilde{\tau}\) is the first Fano contraction. Note that \(\widetilde{X}_s \to A\) remains a splitting smooth toric fibration (see Remark \ref{rmk-trivial-mmp}). In particular, \(-K_{\widetilde{X}_s}\) is \(\mathbb{Q}\)-linearly equivalent to some reduced divisor with at least two components. Consequently, the irreducible assumption of \hyperlink{Case TIR}{Case TIR} is not satisfied.

\begin{ack}
The authors would like to thank Junyi Xie and De-Qi Zhang for the communication of Section \ref{sec-fib}; the proofs of Lemma \ref{lem-basechange} and Theorem \ref{thm-toric-trivial} are essentially borrowed from their ongoing collaborative work with the first author \cite{MXZ24}. 
The authors would also like to thank Fei Hu, Yohsuke Matsuzawa, and Joseph H. Silverman for the valuable suggestions and comments. 
The first author was supported by Fundamental Research Funds for the Central Universities, 
Science and Technology Commission of Shanghai Municipality (No. 22DZ2229014) and a National Natural Science Fund.
The second author was supported by the Institute for Basic Science (IBS-R032-D1).
\end{ack}

\section{Preliminary}\label{sec-pre}

We exhibit the notation and terminology involved in this paper.

\subsection{Varieties, divisors and singularities}

\begin{definition}\label{not}
Let \(X\) be a normal projective variety of dimension \(n\).
\begin{enumerate}
\item Denote by $\textup{NS}(X)\coloneqq\Pic(X)/\Pic^\circ(X)$ the N\'eron-Severi group.
Let $\textup{N}^1(X)\coloneqq\textup{NS}(X)\otimes_\mathbb{Z}\mathbb{R}$. 
Denote by $\textup{PE}^1(X)$  (resp. \(\textup{Nef}(X)\)) the {\it pseudo-effective cone} (resp. {\it nef cone}) in $\textup{N}^1(X)$. 
We will use these two cones in \hyperlink{Case TIR}{Case TIR}.
\item The symbols $\sim$ (resp.\,$\sim_{\mathbb Q}$, \(\sim_{\mathbb{R}}\),
$\equiv$) denote
the linear equivalence (resp.\,$\mathbb Q$-linear equivalence, \(\mathbb{R}\)-linear equivalence, numerical equivalence) on divisors.
\item  An $r$-cycle $C$ on  $X$ is {\it pseudo-effective} if $C \cdot H_1\cdots\cdot H_{n-r}\ge 0$ for any ample divisors $H_1,\cdots,H_{n-r}$ on \(X\). 
Two $r$-cycles $C_1$ and $C_2$ are said to be {\it weakly numerically equivalent}, denoted as $C_1\equiv_w C_2$  if
$(C_1 - C_2) \cdot L_1\cdots L_{n-r}= 0$ for any 
Cartier divisors $L_1,\cdots,L_{n-r}$. 
Let \(\N_{n-1}(X)\) be the quotient of vector space of Weil \(\mathbb{R}\)-divisors modulo the weak numerical equivalence. 
\item A normal projective variety $X$ is of \textit{Fano type}, if there is an effective Weil $\mathbb{Q}$-divisor $\Delta$ on $X$ such that the pair $(X,\Delta)$ has at worst klt singularities and $-(K_X+\Delta)$ is ample and $\mathbb{Q}$-Cartier.
\end{enumerate}
\end{definition}

We refer to \cite[Section 2]{KM98} for the standard notion and terminology on different kinds of singularities. 
 
\begin{definition}
Let \(f\colon X\to Y\) be a finite surjective morphism between normal projective varieties.
The \textit{ramification divisor \(R_f\) of \(f\)} is defined by the formula 
\begin{align}\label{eq-ramification}
K_X=f^*K_Y+R_f.
\end{align}
We call (\ref{eq-ramification}) the  {\it ramification divisor formula}.
We denote by  \(B_f=f(\Supp R_f)\) the  (reduced) \textit{branch divisor} of \(f\).  
We say that \(f\) is \textit{quasi-\'etale}, if it is \'etale away from a codimension two closed subset; in particular, we have \(B_f=0\) and \(K_X=f^*K_Y\) holds as the equality of Weil divisors.
\end{definition}

\begin{definition}\label{defn-irreg}
Let \(X\) be a normal projective variety, we define the \textit{irregularity} of \(X\) by 
$$q(X)\coloneqq h^1(\widehat{X},\mathcal{O}_{\widehat{X}})$$ where \(\widehat{X}\to X\) is a resolution of singularities.
We define the \textit{augmented irregularity} \(\widetilde{q}(X)\in\mathbb{N}\cup\{\infty\}\) to be the supremum of \(q(X')\) whenever \(X'\) runs over the quasi-\'etale covers of \(X\) (see \cite[Section 2]{NZ10}). 
When \(X\) has only rational singularities (e.g., klt), \(q(X)\) coincides with \(h^1(X,\mathcal{O}_X)\) (see \cite[Definition 5.8 and Theorem 5.22]{KM98} and \cite[Proposition 2.3]{Rei83} or \cite[Lemma 8.1]{Kaw85}). 
\end{definition}

\subsection{Reflexive sheaves}
Given a coherent sheaf \(\mathcal{E}\) on a scheme, there is a dualizing operation 
\(\mathcal{E}^{\vee}\coloneqq \textup{Hom}(\mathcal{E},\mathcal{O}_X)\) and 
there is a natural map \(\mathcal{E}\to\mathcal{E}^{\vee\vee}\) to its double-dual.
If this map is an isomorphism, we say that \(\mathcal{E}\) is \textit{reflexive}.

\begin{definition}
Let \(X\) be a normal projective variety, and \(\mathcal{E}\) a reflexive sheaf on \(X\).
It is known that \(\mathcal{E}\) is uniquely determined on a Zariski open subset of \(X\) whose complement has codimension \(\geq 2\). 
We define 
\begin{enumerate}
\item the \textit{reflexive tensor power} $\mathcal{E}^{[\otimes m]}$ by $\mathcal{E}^{[\otimes m]}\coloneqq(\mathcal{E}^{\otimes m})^{\vee\vee}$;
\item the \textit{reflexive symmetric power} $\textup{Sym}^{[m]}\mathcal{E}$ by $\textup{Sym}^{[m]}\mathcal{E}\coloneqq(\textup{Sym}^m\mathcal{E})^{\vee\vee}$; 
\item the \textit{determinant sheaf} $\det\mathcal{E}$ by the reflexive top exterior power $\wedge^{[\textup{rank}\,\mathcal{E}]}\mathcal{E}\coloneqq (\wedge^{\textup{rank}\,\mathcal{E}}\mathcal{E})^{\vee\vee}$,  where $\textup{rank}\,\mathcal{E}$ is the rank of $\mathcal{E}$;
\item the \textit{reflexive pullback} \(f^{[*]}\mathcal{E}\) by $f^{[*]}\mathcal{E} \coloneqq (f^{*}\mathcal{E})^{\vee\vee}$ for a morphism $f \colon  Y \rightarrow X$ between normal  varieties.
\end{enumerate}
\end{definition}

\begin{definition}[Sheaf of reflexive logarithmic 1-form]
Let \(D\) be an effective reduced divisor on a normal projective variety \(X\).
Let \(U\) be a big open subset (whose complement in \(X\) is codimension \(\geq 2\)) of the pair \((X,D)\) such that \((X,D)\) is log smooth.
We denote by \(\Omega_X^{[1]}(\textup{log}\,D)\) the reflexive sheaf on \(X\) whose restriction to \(U\) is the sheaf of the usual logarithmic differential 1-form \(\Omega_U^1(\textup{log}\,D|_U)\).
The reflexive sheaf \(\Omega_X^{[1]}(\textup{log}\,D)\) is called the 
\textit{sheaf of reflexive logarithmic 1-form}. 
Its dual \(T_X(-\textup{log}\,D)\) is called the \textit{logarithmic tangent sheaf}.
\end{definition}

The sheaf of reflexive logarithmic differential 1-forms is well-behaved under the finite pullback.

\begin{lemma}[{see \cite[Lemma 2.7]{DLB22}}]\label{lem-diff-finite}
Let \(\pi\colon Y\to X\) be a finite cover between normal varieties, and let \(D\) be a reduced effective divisor on \(X\).
Suppose that \(\pi\) is quasi-\'etale over \(X\backslash\textup{Supp}\,D\) and set \(D_Y\coloneqq\gamma^*(K_X+D)-K_{Y}\).
Then \(D_Y\) is reduced and effective.
Moreover, we have an isomorphism \(\pi^{[*]}\Omega_X^{[1]}(\textup{log}\,D)\cong\Omega_{Y}^{[1]}(\textup{log}\,D_Y)\).
\end{lemma}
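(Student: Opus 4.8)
The plan is to reduce to a big open subset where everything is log smooth and then invoke reflexivity. Both $\pi^{[*]}\Omega_X^{[1]}(\textup{log}\,D)$ and $\Omega_Y^{[1]}(\textup{log}\,D_Y)$ (the latter once $D_Y$ is known to be reduced and effective) are reflexive sheaves on the normal variety $Y$, so it suffices to construct an isomorphism between their restrictions to a big open subset $V\subseteq Y$ and then extend it. I would choose a big open $U\subseteq X$ such that $X$ is smooth along $U$, $D\cap U$ has simple normal crossings, $\pi$ is étale over $U\setminus D$, and such that $V\coloneqq\pi^{-1}(U)$ is smooth with $D_Y\cap V$ having simple normal crossings. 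Since $\pi$ is finite, it preserves codimensions of closed subsets, so all of these requirements cost only a closed locus of codimension $\ge 2$ in $X$, and $V$ is a big open subset of $Y$. On $U$ and $V$ the reflexive logarithmic differentials coincide with the ordinary ones, and $\pi^*\Omega^1_U$, $\pi^*\Omega^1_U(\textup{log}\,D)$, $\Omega^1_V$, $\Omega^1_V(\textup{log}\,D_Y)$ are all locally free of rank $n=\dim X$.

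First I would prove that $D_Y$ is reduced and effective by computing coefficients at generic points of prime divisors. From the ramification formula $K_Y=\pi^*K_X+R_\pi$ one has $D_Y=\pi^*D-R_\pi$. Let $E\subseteq Y$ be a prime divisor and $B=\overline{\pi(E)}$. If $B\not\subseteq D$, then $\pi$ is étale at the codimension-one point $\eta_B$ (which avoids the excluded codimension-$\ge 2$ locus), so $\pi^*D$ and $R_\pi$ both have coefficient $0$ along $E$, hence so does $D_Y$. If $B=D_i$ is a component of $D$, then the extension of discrete valuation rings $\mathcal{O}_{X,\eta_{D_i}}\hookrightarrow\mathcal{O}_{Y,\eta_E}$ has some ramification index $e$, so $\pi^*D_i$ has coefficient $e$ along $E$; and in characteristic zero the ramification is automatically tame, so the different equals $\mathfrak{m}^{e-1}$ and $R_\pi$ has coefficient $e-1$ along $E$. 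Therefore $D_Y$ has coefficient $e-(e-1)=1$ along $E$. Thus $D_Y=(\pi^{-1}D)_{\mathrm{red}}$ is reduced and effective, and in particular $\operatorname{Supp}(D_Y\cap V)=\pi^{-1}(D)\cap V$, a fact used when arranging $D_Y\cap V$ to have simple normal crossings.

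For the isomorphism I would use pullback of logarithmic forms. Because $\operatorname{Supp}(D_Y\cap V)=\pi^{-1}(D)\cap V$, the ordinary pullback $d\pi\colon\pi^*\Omega^1_U\to\Omega^1_V$ extends to a morphism $\alpha\colon\pi^*\Omega^1_U(\textup{log}\,D)\to\Omega^1_V(\textup{log}\,D_Y)$ (a logarithmic generator $\pi^*\tfrac{dg}{g}$ maps to $\tfrac{d(\pi^*g)}{\pi^*g}$, which is logarithmic along $D_Y$ since $\operatorname{div}(\pi^*g)$ is supported on $\pi^{-1}(D)$). Then $\alpha$ fits into the commutative square with $d\pi$ and the natural inclusions $\Omega^1\hookrightarrow\Omega^1(\textup{log})$; applying $\wedge^n$ yields a commutative square of line-bundle maps. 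Comparing zero divisors of the four determinant maps, using $\operatorname{div}\det(d\pi)=R_\pi$, $\operatorname{div}\det(\pi^*\Omega^1_U\hookrightarrow\pi^*\Omega^1_U(\textup{log}\,D))=\pi^*D$, $\operatorname{div}\det(\Omega^1_V\hookrightarrow\Omega^1_V(\textup{log}\,D_Y))=D_Y$, together with $D_Y=\pi^*D-R_\pi$, gives $\operatorname{div}\det(\alpha)=D_Y+R_\pi-\pi^*D=0$. Hence $\det\alpha$ is a nowhere-vanishing section of $\mathcal{O}_V$, so $\alpha$ is an isomorphism of vector bundles over $V$. Since $\pi^{[*]}\Omega_X^{[1]}(\textup{log}\,D)|_V=\pi^*\Omega^1_U(\textup{log}\,D)$ and $\Omega_Y^{[1]}(\textup{log}\,D_Y)|_V=\Omega^1_V(\textup{log}\,D_Y)$, this is the desired isomorphism over the big open $V$, and it extends uniquely to $Y$ because both sheaves are reflexive.

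The conceptual content here is light; the care goes into two routine-but-delicate points: the choice of $U$, where one must check that smoothness of $V$ and the normal-crossing condition on $D_Y\cap V$ can be achieved by discarding only a codimension-$\ge 2$ locus (here finiteness of $\pi$ is essential), and the bookkeeping of the various determinant sections. \textbf{The one genuinely non-formal ingredient is the tame ramification computation} $\operatorname{coeff}_E R_\pi=e_E-1$ in characteristic zero: it is exactly this tameness that simultaneously forces $D_Y$ to be reduced (coefficient $e-(e-1)=1$) and makes $\operatorname{div}\det\alpha$ vanish. In positive characteristic both conclusions would in general fail, consistent with the standing assumption that $\mathbf{k}$ has characteristic zero.
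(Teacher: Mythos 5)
Your proof is correct. Note that the paper does not prove this lemma at all: it is quoted verbatim from Druel--Lo Bianco \cite[Lemma 2.7]{DLB22} (the $\gamma^*$ in the statement is a typo for $\pi^*$), and your argument --- restriction to a big log-smooth open set, the tame-ramification computation giving $\operatorname{coeff}_E D_Y=e-(e-1)=1$, and the cancellation $\operatorname{div}\det\alpha=D_Y+R_\pi-\pi^*D=0$ forcing $\alpha$ to be an isomorphism of bundles, then reflexive extension --- is essentially the standard proof given in the cited reference.
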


\begin{definition}[Toric varieties]
A normal variety $X$ of dimension $n$ is a \textit{toric variety} if $X$ contains a {\it big torus} $T=(\mathbf{k}^*)^n$ as an (affine) open dense subset such that the natural multiplication action of $T$ on itself extends to an action on the whole variety. In this case, let $B\coloneqq X\backslash T$, which is a divisor; the pair $(X,B)$ is said to be a \textit{toric pair}. 
It is known that the sheaf of reflexive logarithmic 1-form $\Omega_X^{[1]}(\textup{log}\,B)\cong {\OO}_X^{\oplus n}$ of a toric pair \((X,B)\) is free (see e.g. \cite[Remark 4.6]{MZ19}) and $K_X+B\sim 0$.
\end{definition}

\subsection{Fibrations}
Throughout this paper, we will frequently switch within the following various fibrations.

\begin{definition}\label{defn-locally-trivial}
Let \(\pi\colon (X,D)\to Y\) be a fibration between normal varieties (i.e., a proper surjective morphism with connected fibers) where \(D\) is a Weil \(\mathbb{Q}\)-divisor on \(X\). 
\begin{enumerate}
\item We say that \(\pi\) is an \textit{analytically (resp. Zariski) locally trivial fibration over \(Y\)} if for any point \(y\in Y\), there is a small (resp. Zariski) open neighborhood \(U\) of \(y\) such that \((\pi^{-1}(U),D|_{\pi^{-1}(U)})\cong U\times (F,D|_F)\) where \(F\) is a typical fiber of \(\pi\).
\item We say that an analytically locally trivial fibration \(\pi\) is a \textit{toric fibration}, if for any fiber \(F\) of \(\pi\), the restriction \((F,D|_F)\) is a toric pair. 
\item We say that a toric fibration \(\pi\)  is a  \textit{splitting toric fibration}, if for each irreducible component \(D_i\) of \(D\), the restriction \(D_i|_F\) is irreducible for a general fiber \(F\) of \(\pi\). We shall see in Theorem \ref{thm-toric-trivial} that a splitting toric fibration is always Zariski locally trivial. 
\end{enumerate}
\end{definition}

\begin{definition}[Albanese map]
Let \(X\) be a normal projective variety.
Denote by \(\Alb(X)\coloneqq\Pic^0(\Pic^0(X)_{\textup{red}})\) which is an abelian variety.
There is a canonical morphism 
$$\alb_X\colon X\to\Alb(X)$$ such that the image \(\alb_X(X)\) generates \(\Alb(X)\) and every morphism \(X\to A\) from \(X\) to an abelian variety \(A\) factors through \(\Alb(X)\) (see \cite[Remark 9.5.25]{FGA05}).
If $X$ further has only rational singularities (e.g., klt), then $\dim \Alb(X)=q(X)$.
\end{definition}

\subsection{Endomorphisms and dynamical invariants}

\begin{definition}\label{def-endo}
Let \(f\colon X\to X\) be a surjective endomorphism of a normal projective variety \(X\). 
\begin{enumerate}
\item We say that \(f\) is \textit{$q$-polarized} if $f^*H\sim qH$ for some ample Cartier divisor $H$ and integer $q>1$, or equivalently,  \(f^*|_{\textup{N}^1(X)}\) is diagonalizable with all the eigenvalues being of modulus \(q\) (see \cite[Proposition 2.9]{MZ18}).
\item We say that  $f$ is \textit{int-amplified} if $f^*L-L$ is ample for some  ample  Cartier divisor $L$, or equivalently, all the eigenvalues of \(f^*|_{\textup{N}^1(X)}\) are of modulus greater than 1 (see \cite[Theorems 1.1 and 3.3]{Men20}). 
Clearly, every polarized endomorphism is int-amplified.
\item A subset $D\subseteq X$ is said to be {\it $f^{-1}$-invariant}  if $f^{-1}(D)=D$. 
\item Denote by $\SEnd(X,D)$ the monoid of surjective endomorphisms $f$ of $X$ with $f^{-1}(D)=D$.
\end{enumerate}
\end{definition}

We refer to \cite{MZ23} for a quick survey of the recent progress on polarized and int-amplified endomorphisms.

\begin{definition}[First dynamical degree]\label{def-dyn}
Let \(f\colon X\to X\) be a surjective endomorphism of a normal projective variety \(X\) and \(H\) an ample divisor on \(X\).
The \textit{first dynamical degree} \(\delta_f\) of \(f\) is defined to be the following limit
\[
\delta_f\coloneqq\lim_{n\to\infty}((f^n)^*H\cdot H^{\dim X-1})^{1/n}\in\mathbb{R}_{\geq 1}.
\]
It is known that the limit always exists and is independent of the choice of the ample divisor  (see \cite{DS04,DS05}; cf.~\cite{Dan19}). 
It is also known that the first dynamical degree \(\delta_f\) is invariant under generically finite maps and coincides with the spectral radius of the induced linear operation \(f^*|_{\textup{NS}_{\mathbb{C}}(X)}\). 
Note that \(\delta_{f^s}=(\delta_f)^s\).
\end{definition}

\begin{definition}[Arithmetic degree]\label{def-ari}
Let \(f\colon X\to X\) be a surjective endomorphism of a normal projective variety \(X\) over \(\overline{\mathbb{Q}}\).
\begin{enumerate}
\item 
For  \(D\in\textup{N}^1(X)\) on \(X\), there is a height function \(h_D\colon X(\overline{\mathbb{Q}})\to\mathbb{R}\) associated to \(D\) which measures the complexity of \(\overline{\mathbb{Q}}\)-points. 
Such a function is uniquely determined up to a bounded function.
We refer to \cite[Part B]{HS00} (cf.~\cite[Theorem 1.1.1]{Kaw06}) for an introduction to
Weil’s height theory. 
\item Let \(h_H\geq 1\) be an absolute logarithmic Weil height function associated with an ample divisor \(H\).
Then for every \(x\in X(\overline{\mathbb{Q}})\), we define the arithmetic degree of \(f\) at \(x\) by
\[
\alpha_f(x)=\lim_{n\to\infty}h_H(f^n(x))^{1/n}\in\mathbb{R}_{\geq 1}.
\]
It is known that the limit always exists and is also independent of the choice of the ample divisor (see \cite[Proposition 12]{KS16a}, \cite[Theorem 3 (a)]{KS16b} for details). 
Note also that \(\alpha_f(x)\leq \delta_f\) and  \(\alpha_{f^s(x)}=\alpha_f(x)^s\). 
\end{enumerate}
\end{definition}

The following lemma allows us to reduce the Kawaguchi-Silverman conjecture (KSC, Conjecture \ref{conj-KSC}) to its generically finite cover of the underlying variety.
\begin{lemma}[{see \cite[Lemma 2.5]{MZ22}}]\label{lem-ksc-iff}
Consider the equivariant dynamical systems
$$\xymatrix{
f \acts X \ar@{-->}[r]^{\pi} &Y\racts g
}$$
of normal projective varieties with $\pi$ a dominant rational map.
Then the following hold.
\begin{itemize}
\item[(1)] Suppose $\pi$ is generically finite.
Then KSC holds for $f$ if and only if KSC holds for $g$.
\item[(2)] Suppose $\delta_f=\delta_g$ and KSC holds for $g$.
Then KSC holds for $f$.
\end{itemize}
\end{lemma}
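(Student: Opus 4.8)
The plan is to reduce to the case where the transition map is a \emph{morphism} by passing to the graph, and then to trade height functoriality against positivity of divisors; the only step that is not purely formal is a known estimate for arithmetic degrees under birational morphisms. \emph{Reduction to morphisms.} Let $W$ be the normalization of the Zariski closure of the graph of $\pi$ in $X\times Y$, with induced projections $p\colon W\to X$ (birational) and $q\colon W\to Y$ (surjective, and generically finite exactly when $\pi$ is). Since $\pi$ is $(f,g)$-equivariant, the endomorphism $f\times g$ of $X\times Y$ preserves the graph of $\pi$, hence its closure, and so lifts to a surjective endomorphism $h\colon W\to W$ with $p\circ h=f\circ p$ and $q\circ h=g\circ q$. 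As the first dynamical degree is invariant under generically finite maps, $\delta_h=\delta_f$, and moreover $\delta_h=\delta_g$ in case (1), while $\delta_h=\delta_f=\delta_g$ in case (2). Thus it suffices to prove, for a surjective morphism $\phi\colon V\to V'$ with $\phi\circ\psi=\psi'\circ\phi$ for endomorphisms $\psi\acts V$, $\psi'\acts V'$: \textbf{(a)} if $\delta_\psi=\delta_{\psi'}$, then KSC for $\psi'$ implies KSC for $\psi$; and \textbf{(b)} if $\phi$ is moreover generically finite, then KSC for $\psi$ implies KSC for $\psi'$. Granting these, part (1) follows from ``KSC for $f$'' $\Leftrightarrow$ ``KSC for $h$'' $\Leftrightarrow$ ``KSC for $g$'' (the first equivalence via $p$, the second via $q$), and part (2) follows from ``KSC for $g$'' $\Rightarrow$ ``KSC for $h$'' by (a) applied to $q$, and ``KSC for $h$'' $\Rightarrow$ ``KSC for $f$'' by (b) applied to $p$.

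\emph{The formal inequality; proof of (a).} Fix ample divisors $H$ on $V$ and $H'$ on $V'$. Since $\phi^*H'$ is nef, $cH-\phi^*H'$ is ample for $c\gg 0$, so by functoriality and positivity of Weil heights \cite[Part B]{HS00} one has $h_{H'}(\phi(w))=h_{\phi^*H'}(w)+O(1)\le c\,h_H(w)+O(1)$ for all $w\in V(\overline{\mathbb{Q}})$. Taking $w=\psi^n(v)$ and using $\phi(\psi^n(v))=(\psi')^n(\phi(v))$ gives $\alpha_{\psi'}(\phi(v))\le\alpha_\psi(v)$. Also, $\phi$ carries a Zariski-dense orbit to a Zariski-dense orbit, and when $\phi$ is generically finite a dimension count conversely shows that every point of $\phi^{-1}(v')$ has Zariski-dense $\psi$-orbit once $O_{\psi'}(v')$ is dense. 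Now for (a): if $O_\psi(v)$ is dense, then so is $O_{\psi'}(\phi(v))$, so KSC for $\psi'$ yields $\alpha_{\psi'}(\phi(v))=\delta_{\psi'}=\delta_\psi$, hence $\alpha_\psi(v)\ge\delta_\psi$; together with the universal bound $\alpha_\psi(v)\le\delta_\psi$ this is KSC for $\psi$. This settles (a), and with it part (2) and the implication ``KSC for $\psi'$ $\Rightarrow$ KSC for $\psi$'' in part (1).

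\emph{Proof of (b); the main obstacle.} Assume $\phi$ generically finite and KSC for $\psi$; let $v'\in V'(\overline{\mathbb{Q}})$ have Zariski-dense orbit and choose $v\in\phi^{-1}(v')$, so $O_\psi(v)$ is dense and $\alpha_\psi(v)=\delta_\psi=\delta_{\psi'}$. We need the \emph{reverse} of the inequality above, namely $\alpha_{\psi'}(\phi(v))\ge\alpha_\psi(v)$. This is where the real work lies. If $\phi$ is \emph{finite}, then $\phi^*H'$ is ample and the reverse inequality holds by the same argument, giving the result. The general case reduces, via Stein factorization, to $\phi$ \emph{birational}; here $\phi^*H'$ is big and nef but not ample, and writing $\phi^*H'\sim_{\mathbb{Q}}A+E$ with $A$ ample and $E\ge 0$ effective (Kodaira's lemma) one only obtains $h_{H'}(\phi(w))\ge c^{-1}h_H(w)-O(1)$ for $w\notin\Supp(E)$, so the contribution of the times $n$ with $\psi^n(v)\in\Supp(E)$ must be controlled. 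This is precisely the delicate point of \cite[Lemma 2.5]{MZ22}, handled by the height estimates of Kawaguchi--Silverman \cite{KS16a} and Matsuzawa \cite{Mat20a} for arithmetic degrees under birational morphisms — intuitively, a Zariski-dense orbit cannot approach the exceptional locus fast enough to slow the exponential growth of the height. Granting this, $\alpha_{\psi'}(\phi(v))\ge\alpha_\psi(v)=\delta_{\psi'}$, so equality holds and KSC holds for $\psi'$. Thus the only genuine obstacle is this last point: controlling heights along orbits that meet the exceptional divisor of a birational contraction.
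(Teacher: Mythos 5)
The paper itself offers no proof of this lemma --- it is quoted verbatim from \cite[Lemma 2.5]{MZ22} --- so there is no in-paper argument to compare against; I can only assess your reconstruction. Your architecture is the standard (and correct) one: pass to the normalization of the graph closure, lift $f\times g$ to a surjective endomorphism $h$ of $W$, use invariance of $\delta$ under generically finite equivariant maps, and then reduce everything to the two one-sided statements (a) and (b) for morphisms. The formal parts are all fine: the lift $h$ exists and is surjective, the inequality $\alpha_{\psi'}(\phi(v))\le\alpha_\psi(v)$ follows from $cH-\phi^*H'$ ample, dense orbits push forward to dense orbits (and pull back to dense orbits under generically finite $\phi$), and you correctly observe that \emph{both} parts of the lemma ultimately hinge on (b) applied to the birational projection $p\colon W\to X$ --- there is no way to dodge it, since the easy inequality always points the wrong way for $p$.

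The one genuine gap is that (b) is not proved but outsourced, and the heuristic you offer for it (``a Zariski-dense orbit cannot approach the exceptional locus fast enough to slow the growth of the height'') is not how the argument actually closes and could mislead you into looking for a quantitative estimate that is not needed. The correct mechanism is softer: for surjective endomorphisms the limit defining $\alpha_{\psi'}(\phi(v))=\lim_n h_{\phi^*H'}(\psi^n(v))^{1/n}$ \emph{exists} (\cite[Proposition 12]{KS16a}), so it may be computed along any infinite subsequence. Writing $\phi^*H'\sim_{\mathbb{Q}}A+E$ by Kodaira's lemma, Zariski density of $O_\psi(v)$ gives infinitely many $n_k$ with $\psi^{n_k}(v)\notin\Supp E$, and along these times $h_{\phi^*H'}(\psi^{n_k}(v))\ge \epsilon\, h_H(\psi^{n_k}(v))-C$; since $\lim_k h_H(\psi^{n_k}(v))^{1/n_k}=\alpha_\psi(v)$ (again because the full limit exists), this yields $\alpha_{\psi'}(\phi(v))\ge\alpha_\psi(v)$ with no control on how the orbit approaches $\Supp E$ at the remaining times. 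Note also that density of the orbit is genuinely indispensable here, not a convenience: for a point lying on the exceptional divisor of an equivariant blow-up of a fixed point one typically has $\alpha_f(x)>\alpha_g(\pi(x))=1$, so (b) is false pointwise. With this step supplied, your proof is complete and agrees with the argument of \cite[Lemma 2.5]{MZ22}.
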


\section{Transcendental argument}\label{sec-trans}
In our paper, we address the arithmetic problem that operates over the field \(\overline{\mathbb{Q}}\) of algebraic numbers. However, most advanced theories of vector bundles and fundamental groups are initially developed in the analytical setting. Although this should be well-known to experts, for the sake of completeness, we briefly explain in this section how to apply the known results from \(\mathbb{C}\) to any algebraically closed field \(\mathbf{k}\) of characteristic zero.

Let \(X\) be a projective variety defined over an algebraically closed field \(\mathbf{k}\) of characteristic zero, \(D\) a reduced effective divisor on \(X\), and \(\mathcal{E}\) is reflexive sheaf on \(X\).
Since the defining field of \(X\) is indeed finitely generated over \(\mathbb{Q}\), we may assume that \(\mathbf{k}\) is a subfield of \(\mathbb{C}\). Denote by \(X_{\mathbb{C}}\coloneqq X\times_{\textup{Spec}\,\mathbf{k}}\textup{Spec}\,\mathbb{C}\), \(D_{\mathbb{C}}\coloneqq D\times_{\textup{Spec}\,\mathbf{k}}\textup{Spec}\,\mathbb{C}\), and \(\mathcal{E}_{\mathbb{C}}\coloneqq\mathcal{E}\times_{\textup{Spec}\,\mathbf{k}}\textup{Spec}\,\mathbb{C}\). Let \(\pi\colon Y\to X_{\mathbb{C}}\) be a finite cover which is quasi-\'etale away from \(D_{\mathbb{C}}\).

By the Lefschetz principle, there is a finitely generated \(\mathbf{k}\)-algebra \(R\) such that \(\pi\), \(Y\), \(\pi^*D_{\mathbb{C}}\), \(\pi^{[*]}\mathcal{E}\) and \(X\) are all defined over \(\textup{Spec}\,R\).
In other words, there exists a finite morphism \(\pi_V\colon Y_V\to X_V\) of normal projective varieties defined over \(V\coloneqq\textup{Spec}\,R\) such that \(X_{\mathbb{C}}=X_V\times_{V}\textup{Spec}\,\mathbb{C}\), \(Y=Y_V\times_{V}\textup{Spec}\,\mathbb{C}\) and \(\pi=\pi_V\times_{V}\textup{Spec}\,\mathbb{C}\), noting that \(R\) is a subring of \(\mathbb{C}\) as \(R\) is constructed by taking the defining coefficients of \(Y\) and \(\pi\). 
So we obtain the following commutative diagram
\[
\xymatrix{
Y\ar[r]\ar[d]_{\pi}&Y_V\ar[d]^{\pi_V}&\\
X_{\mathbb{C}}\ar[r]\ar[d]_{p_\mathbb{C}}&X_V\ar[r]\ar[d]^{p_V}&X\ar[d]_{p_{\mathbf{k}}}\\
\textup{Spec}\,\mathbb{C}\ar[r]&V\ar[r]&\textup{Spec}\,\mathbf{k}
}
\]

With the notation above, we prove the following theorem.
\begin{theorem}\label{thm-trans}
Suppose that the reflexive pullback \(\pi^{[*]}\mathcal{E}\) is locally free.
There exists a finite cover \(\pi_{\mathbf{k}}\colon Y_{\mathbf{k}}\to X\) defined over \(\mathbf{k}\) which is quasi-\'etale away from \(D\) such that the reflexive pullback \(\pi_{\mathbf{k}}^{[*]}\mathcal{E}\) is also locally free and  \(q(Y_{\mathbf{k}})=q(Y)\).
\end{theorem}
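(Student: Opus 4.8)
I would prove this by a relative Lefschetz-principle (spreading-out) argument, using precisely the diagram displayed above: spread the $\mathbb{C}$-datum $(\pi,Y,\pi^{[*]}\mathcal{E}_{\mathbb{C}})$ out over the finitely generated $\mathbf{k}$-algebra $R$, shrink $\operatorname{Spec}R = V$ so that all desired properties hold fibrewise, and specialize at a $\mathbf{k}$-rational point of $V$. Since $\mathbf{k}$ is algebraically closed, $V$ and every dense open of it carry a Zariski-dense set of $\mathbf{k}$-points, and for a $\mathbf{k}$-point $v$ the fibre $X_v$ is canonically $X$; one then sets $\pi_{\mathbf{k}}\coloneqq\pi_v\colon Y_{\mathbf{k}}\coloneqq Y_v\to X$. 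Here ``shrink $V$'' means replace $R$ by a localization, which is harmless as each property imposed cuts out a dense open and only finitely many are needed. By generic flatness and standard openness of loci, after such shrinking we may assume: $\pi_V$ is finite, $X_V\to V$ and $Y_V\to V$ are projective and flat with geometrically integral, geometrically normal fibres, $D_V$ is a reduced divisor flat over $V$, and $\mathcal{E}_V$ is $V$-flat. Descending finiteness, codimension-one étaleness off $D$, integrality and normality along the faithfully flat base change $\operatorname{Frac}(R)\hookrightarrow\mathbb{C}$ shows these hold on the generic fibre, hence (being constructible, or closed as $Y_V\to V$ is proper) on a dense open $V^{\circ}$; for $v\in V^{\circ}(\mathbf{k})$, $\pi_{\mathbf{k}}$ is then a finite cover of normal projective $\mathbf{k}$-varieties, quasi-\'etale away from $D$.

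\textbf{Local freeness of the reflexive pullback.} Form $\mathcal{F}_V\coloneqq(\pi_V^{*}\mathcal{E}_V)^{\vee\vee}$ on $Y_V$, with its natural map $u_V\colon \pi_V^{*}\mathcal{E}_V\to\mathcal{F}_V$. Since $\mathcal{H}om(-,\mathcal{O})$ commutes with flat base change for coherent sheaves, the pullback of $\mathcal{F}_V$ along the flat morphism $Y\to Y_V$ is $(\pi^{*}\mathcal{E}_{\mathbb{C}})^{\vee\vee}=\pi^{[*]}\mathcal{E}_{\mathbb{C}}$, locally free by hypothesis; equivalently (descent along the faithfully flat $Y\to Y_{\eta}$) $\mathcal{F}_V$ is locally free on the generic fibre $Y_{\eta}$. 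Its non-free locus is closed in $Y_V$ and disjoint from $Y_{\eta}$, so has closed image in $V$ avoiding the generic point; shrinking $V$ we may assume $\mathcal{F}_V$ is locally free on all of $Y_V$, hence $\mathcal{F}_V|_{Y_v}$ is locally free for every closed $v$. To identify this with $\pi_v^{[*]}\mathcal{E}_v$, note that the locus $W\subseteq Y_V$ where $u_V$ is not an isomorphism is closed, and over $\mathbb{C}$ the map $u_{\mathbb{C}}$ is an isomorphism outside a closed set of codimension $\geq 2$ in the normal variety $Y$ (the defining property of the reflexive pullback); thus $W\cap Y_{\eta}$ has codimension $\geq 2$ in $Y_{\eta}$, and by upper-semicontinuity of fibre dimension for the proper morphism $W\to V$ we may, after shrinking, assume $W\cap Y_v$ has codimension $\geq 2$ in the normal variety $Y_v$ for all $v$. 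Then $\mathcal{F}_V|_{Y_v}$ is a reflexive (indeed locally free) sheaf agreeing with $\pi_v^{*}\mathcal{E}_v$ outside codimension $\geq 2$, so it equals the reflexive hull $(\pi_v^{*}\mathcal{E}_v)^{\vee\vee}=\pi_v^{[*]}\mathcal{E}$; in particular $\pi_{\mathbf{k}}^{[*]}\mathcal{E}$ is locally free.

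\textbf{The irregularity.} Fix a resolution $\rho\colon \widetilde{Y}\to Y$ over $\mathbb{C}$ and spread it out to a projective $\rho_V\colon \widetilde{Y}_V\to Y_V$. By generic smoothness (characteristic zero) and openness of the birational locus, after shrinking $V$ the morphism $\widetilde{Y}_V\to V$ is smooth and every $\rho_v\colon \widetilde{Y}_v\to Y_v$ is a resolution, so $q(Y_v)=h^{1}(\widetilde{Y}_v,\mathcal{O}_{\widetilde{Y}_v})$. Shrinking once more, cohomology and base change for the flat projective family $\widetilde{Y}_V\to V$ makes $v\mapsto h^{1}(\widetilde{Y}_v,\mathcal{O})$ locally constant; its value at the generic point is $h^{1}(\widetilde{Y},\mathcal{O}_{\widetilde{Y}})=q(Y)$ by flat base change along $\operatorname{Frac}(R)\hookrightarrow\mathbb{C}$, so $q(Y_v)=q(Y)$ on a dense open of $V$. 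Choosing $v$ a $\mathbf{k}$-point in the intersection of the finitely many dense opens produced above yields the required cover $\pi_{\mathbf{k}}$.

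\textbf{Main obstacle.} The one genuinely delicate point is that the reflexive pullback does not commute with the \emph{non-flat} specialization at the closed point $v$, so one cannot naively read $\pi_{\mathbf{k}}^{[*]}\mathcal{E}$ off from $\mathcal{F}_V$ by ``setting $t=v$''. The remedy is the combination used above: local freeness of $\mathcal{F}_V$ is an open condition on $V$, while the locus where $u$ fails to be an isomorphism has codimension $\geq 2$ in every fibre after shrinking; together these force $\pi_v^{[*]}\mathcal{E}$ to be exactly the locally free sheaf $\mathcal{F}_V|_{Y_v}$. All remaining ingredients — finiteness, the branch divisor lying in $D$, normality of the fibres, and the constancy of $h^{1}$ of the resolution — are routine generic-flatness and constructibility bookkeeping.
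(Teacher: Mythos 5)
Your proposal is correct and follows essentially the same spreading-out argument as the paper's proof: spread the data out over $V=\operatorname{Spec}R$, shrink $V$ so that finiteness, quasi-\'etaleness away from $D$, local freeness of the reflexive pullback, and constancy of the irregularity all hold fibrewise, then specialize at a general $\mathbf{k}$-rational point of $V$. Your identification of $\mathcal{F}_V|_{Y_v}$ with $\pi_v^{[*]}\mathcal{E}$ via the codimension-$\geq 2$ non-isomorphism locus of $u_V$ is spelled out in more detail than in the paper (which simply propagates local freeness from the generic fibre and invokes upper semicontinuity for $q$), but the route is the same.
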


\begin{proof} 
Since the natural ring extension \(R\to\mathbb{C}\) is flat, and the flatness is stable under the base change, by the descent theory, it follows that \(\pi\) is quasi-\'etale away from \(D_{\mathbb{C}}\) if and only if \(\pi_V\) is quasi-\'etale away from \(D_V\) over the generic point of \(V\).
After shrinking \(V\), we may assume that the branch divisor \(B_{\pi_V}\) of \(\pi_V\) is contained in \(D_V\). 
By the generic flatness, after shrinking \(V\), we may also assume that \(\pi_V\) is flat.  
Moreover, as \(\pi^{[*]}\mathcal{E}_{V}\) is locally free along the generic fiber of the projective morphism \(p_V\circ\pi_V\), the closed subset over which \(\pi^{[*]}\mathcal{E}_{V}\) is not locally free does not dominate \(V\).
In particular, after a further shrinking of \(V\), we may assume that \(\pi^{[*]}\mathcal{E}_{V}\) is locally free over \(V\). 
Hence, we can pick a general \(\mathbf{k}\)-rational point \(u\in  V\) such that \(\pi_u\colon Y_{V,u}\coloneqq \pi_V^{-1}p_V^{-1}(u)\to X_{V,u}\coloneqq p_V^{-1}(u)\) is a finite morphism defined over \(\mathbf{k}\) and quasi-\'etale away from \(D_V|_{p_V^{-1}(u)}\). 
Besides, by the upper semi-continuity (see~\cite[Chapter III, Theorem 12.8]{Har77}), we may assume \(q(Y_{V,u})=q(Y)\) after further shrinking of $V$, noting that \(q(Y)\) coincides with the irregularity of the generic fiber of \(p_V\circ\pi_V\). 
Since there is a natural isomorphism \((X_V,D_V)=(X,D)\times_{V}\textup{Spec}\,\mathbf{k}\),  
 we conclude our lemma by taking \(Y_{\mathbf{k}}=Y_{V,u}\) and \(\pi_{\mathbf{k}}=\pi_{u}\).
\end{proof}

\section{Positivity of dynamically invariant reflexive sheaves}\label{sec-pos}
In this section, we study the dynamical restrictions on the invariant reflexive sheaf, with Theorem \ref{thm-invariant-sheaf} as the main result. We first recall several notions of positivity.
\begin{definition}\label{def-pos}
Let \(\mathcal{E}\) be a torsion free coherent sheaf on a normal projective variety \(X\).
We say that \(\mathcal{E}\) is 
\begin{enumerate}
\item  \textit{generically globally generated} if \(\mathcal{E}\) is globally generated at a general point, i.e., there exists a generically surjective map \(\mathcal{O}_X^{\oplus n}\to \mathcal{E}\) for some \(n\); 
\item  \textit{pseudo-effective} (or \textit{weakly positive}) 
if for any positive integer \(a\in\mathbb{Z}\) and for any ample divisor \(A\), there exists a positive integer \(b\) such that \(\textup{Sym}^{[ab]}\mathcal{E}\otimes\mathcal{O}_X(bA)\) is globally generated along a general point (see \cite[Section 2]{Mat23} for other equivalent definitions; cf.~\cite[Chapter V. Definition 3.20]{Nak04}); 
\item \textit{nef} if the tautological line bundle \(\mathcal{O}_{\mathbb{P}_X(\mathcal{E})}(1)\) is nef on the Grothendieck projectivization \(\mathbb{P}_X(\mathcal{E})\coloneqq \textup{Proj}(\textup{Sym}^{\bullet}\mathcal{E})\); 
\item \textit{almost nef} if the base field is \textbf{uncountable} (e.g., \(\mathbb{C}\)) and there exist countably many proper subvarieties \(Z_i\subseteq X\) such that the sheaf \(\mathcal{E}|_C\coloneqq\mathcal{E}\otimes\mathcal{O}_C\) is nef for any curve \(C\not\subseteq\cup_iZ_i\)
(we only use this notation in the proof of Theorem \ref{thm-struc-k}); 
\item \textit{numerically flat} if both \(\mathcal{E}\) and its dual \(\mathcal{E}^{\vee}\) are nef. 
\item \textit{R-flat} if \(\mathcal{E}\) is locally free of rank $n$ and \(\nu^*\mathcal{E}\cong\mathcal{O}_{\mathbb{P}^1}^{\oplus n}\) for any morphism \(\nu\colon\mathbb{P}^1\to X\).
Clearly, a numerically flat locally free sheaf is always R-flat and the converse holds true if \(X\) is rationally connected (see \cite[Lemma 2.13]{DLB22}). 
\end{enumerate}
\end{definition}

The main theorem in this section is a natural generalization of the Weil divisor case. Indeed, when \(\mathcal{E}\) is a Weil divisor, it follows immediately from \cite[Theorem 3.3]{Men20} that \(\mathcal{E}\) is weakly numerically trivial, since all the eigenvalues of \(f^*|_{\N_{n-1}(X)}\) have modulus greater than 1.

\begin{theorem}\label{thm-invariant-sheaf}
Let \(f \colon X \to X\) be an int-amplified endomorphism of a 
normal projective variety \(X\) of dimension \(n\).
Let \(\mathcal{E}\) be a reflexive sheaf on \(X\) such that the reflexive pullback \(f^{[*]}\mathcal{E} \cong \mathcal{E}\). 
Then \(\mathcal{E}\) is pseudo-effective.
Moreover,  \(\pi^{[*]}\mathcal{E}\) is a numerically flat locally free sheaf for some quasi-\'etale cover \(\pi \colon Y \to X\) if \(X\) is further klt.
\end{theorem}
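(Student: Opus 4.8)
The plan is to split the statement into the pseudo-effectivity part, which holds over any algebraically closed $\mathbf{k}$ of characteristic zero, and the numerical-flatness part, which we first establish over $\mathbb{C}$ and then transport back to $\mathbf{k}$ via Theorem~\ref{thm-trans}. For the first part, I would reduce to a statement about symmetric powers: because $f^{[*]}\mathcal{E}\cong\mathcal{E}$, we get $f^{[*]}\big(\operatorname{Sym}^{[m]}\mathcal{E}\big)\cong\operatorname{Sym}^{[m]}\mathcal{E}$ for all $m$, so it suffices to produce, for a fixed ample $A$, a uniform bound of the shape $\operatorname{Sym}^{[ab]}\mathcal{E}\otimes\mathcal{O}_X(bA)$ generically globally generated. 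The key input is that $f$ is int-amplified, so $(f^n)^*A - A$ is ample for $n$ large (after replacing $A$ by a suitable positive multiple), hence $(f^n)^*A = A + (\text{ample})$. Pulling a generic global generation statement for $\operatorname{Sym}^{[m]}\mathcal{E}\otimes\mathcal{O}_X(A)$ back by $f^n$ and using $f^{[*]}\operatorname{Sym}^{[m]}\mathcal{E}\cong\operatorname{Sym}^{[m]}\mathcal{E}$ converts the twist $A$ into $(f^n)^*A$, which dominates $A$; iterating and using that $f$ is finite (so $f_*\mathcal{O}_X\supseteq\mathcal{O}_X$ as a direct summand in char zero, preserving generic global generation under pushforward) yields the required family. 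This is exactly the Weil-divisor argument of \cite[Theorem~3.3]{Men20} promoted to higher rank, and it is what makes $\mathcal{E}$ weakly positive in the sense of Definition~\ref{def-pos}(2).

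For the second part, assume $X$ is klt and (by the transcendental reduction of Section~\ref{sec-trans}) that $\mathbf{k}=\mathbb{C}$. I would argue that $\mathcal{E}$, being pseudo-effective with $f^{[*]}\mathcal{E}\cong\mathcal{E}$, has numerically trivial determinant: indeed $\det\mathcal{E}$ is a reflexive rank-one sheaf, i.e.\ a Weil divisor class $D$ with $f^*D\equiv_w D$, and since every eigenvalue of $f^*|_{\N_{n-1}(X)}$ has modulus $>1$ (this is \cite[Theorem~3.3]{Men20} again, applied to the weak numerical equivalence on Weil divisors), we must have $D\equiv_w 0$. A pseudo-effective reflexive sheaf with numerically trivial determinant is, after passing to a quasi-\'etale cover on which it becomes locally free, numerically flat: one shows $\mathcal{E}$ is nef (pseudo-effectivity plus $c_1\equiv 0$ forces nefness of the tautological bundle on $\mathbb{P}_X(\mathcal{E})$ up to the boundary), and then $\det\mathcal{E}\equiv 0$ forces the dual $\mathcal{E}^\vee$ to be nef as well; local freeness on a quasi-\'etale cover is obtained by the standard argument that a reflexive sheaf which is a "flat" sheaf in the sense of having numerically trivial Chern classes becomes locally free after such a cover (cf.\ the structure theory used in \cite{DLB22}, Theorem~\ref{thm-struc-k}), or more directly by restricting to a general complete-intersection curve, where numerical flatness upgrades to genuine flatness and one bootstraps using reflexivity in codimension one together with the klt (hence quotient, in codimension two) hypothesis.

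The main obstacle I anticipate is the passage from "pseudo-effective with numerically trivial determinant" to "numerically flat and locally free after a quasi-\'etale cover": pseudo-effectivity is a much weaker positivity than nefness, and in higher rank one genuinely needs a Bogomolov-type or Chern-class vanishing input to rule out a nontrivial Harder--Narasimhan-type filtration. The cleanest route is probably to observe that $f^{[*]}\mathcal{E}\cong\mathcal{E}$ forces all reflexive Chern classes (in the sense available on klt varieties, \cite[Section~3]{GKPT19}) to be fixed by $f^*$, hence to vanish after intersecting with any nef classes, and then to invoke a higher-rank characterization of numerical flatness via vanishing of $c_1$ and $c_2$ together with semistability, the latter being automatic here because an $f^{[*]}$-invariant destabilizing subsheaf would again be $f$-invariant of strictly smaller rank, allowing an inductive descent. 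One must also be careful that the quasi-\'etale cover produced for local freeness is compatible with (a power of) $f$; but this is not claimed in the statement, so I would defer the equivariant refinement to Theorem~\ref{thm-tir} and here only record the existence of $\pi$. Finally, transporting the conclusion from $\mathbb{C}$ back to $\mathbf{k}$ is routine given Theorem~\ref{thm-trans}, since local freeness of the reflexive pullback and the equality $q(Y_{\mathbf{k}})=q(Y)$ are exactly what that theorem delivers.
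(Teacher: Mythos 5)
Your proposal follows the paper's proof essentially step for step: pseudo-effectivity is obtained by pulling back $\operatorname{Sym}^{[m]}\mathcal{E}\otimes\mathcal{O}_X(H)$ along iterates of $f$ (the twist grows geometrically while the sheaf factor is $f^{[*]}$-invariant, so the pullback becomes globally generated) and then descending weak positivity along the finite map, and the determinant is shown to satisfy $\det(\mathcal{E})\cdot H^{n-1}=0$ exactly as you describe. The step you flag as the main obstacle --- pseudo-effective reflexive sheaf with numerically trivial determinant on a klt variety becomes numerically flat and locally free after a quasi-\'etale cover --- is precisely \cite[Proposition 3.1]{IMZ23}, which the paper simply quotes (via Proposition \ref{prop-imz-k} together with Theorem \ref{thm-trans} for the descent to $\mathbf{k}$), so none of the Chern-class or semistability machinery you sketch is needed; indeed the $f^*$-invariance of $\widehat{c}_2$ you suggest is exactly the route Remark \ref{rmk-comparison}(ii) identifies as problematic in the singular setting.
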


Before proving Theorem \ref{thm-invariant-sheaf}, we extend \cite[Proposition 3.1]{IMZ23} to the following proposition, which holds over any algebraically closed field of characteristic zero.

\begin{proposition}\label{prop-imz-k}
Let \(X\) be a klt projective variety of dimension \(n\) (defined over \(\mathbf{k}\)) and \(\mathcal{E}\) a pseudo-effective reflexive sheaf such that \(\det(\mathcal{E})\cdot H^{n-1}=0\) for some ample divisor $H$. 
Then there is a quasi-\'etale cover \(\pi_{\mathbf{k}}\colon Y\to X\) such that the reflexive pullback \(\pi_{\mathbf{k}}^{[*]}\mathcal{E}\) is a numerically flat locally free sheaf.
\end{proposition}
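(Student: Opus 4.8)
The plan is to reduce to the case $\mathbf k=\mathbb C$, where \cite[Proposition 3.1]{IMZ23} already applies, using the transcendental transfer machinery of Section \ref{sec-trans}. First I would pass to $X_{\mathbb C}$: writing $\mathbf k\subseteq\mathbb C$, set $\mathcal E_{\mathbb C}$ the pullback of $\mathcal E$, which remains reflexive (reflexivity is stable under flat base change, or one checks it on a big open where $\mathcal E$ is locally free). Pseudo-effectivity and the numerical condition $\det(\mathcal E_{\mathbb C})\cdot H_{\mathbb C}^{n-1}=0$ both descend/ascend under the faithfully flat base change $\mathbf k\to\mathbb C$: global generation at a general point is an open condition that is preserved, intersection numbers are unchanged, and $X_{\mathbb C}$ is still klt. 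Hence \cite[Proposition 3.1]{IMZ23} produces a quasi-\'etale cover $\pi\colon Y\to X_{\mathbb C}$ with $\pi^{[*]}\mathcal E_{\mathbb C}$ numerically flat and locally free.

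Next I would invoke Theorem \ref{thm-trans} to descend this cover back to $\mathbf k$. The branch locus of $\pi$ has codimension $\geq 2$; enlarging it, I may regard $\pi$ as quasi-\'etale away from some reduced divisor $D$ (or simply apply the theorem with $D$ a divisor containing the image of the branch locus, which has measure zero effect since everything is quasi-\'etale). Theorem \ref{thm-trans}, applied with this $D$ and with the hypothesis that $\pi^{[*]}\mathcal E_{\mathbb C}$ is locally free, yields a finite cover $\pi_{\mathbf k}\colon Y_{\mathbf k}\to X$ defined over $\mathbf k$, quasi-\'etale away from $D$, such that $\pi_{\mathbf k}^{[*]}\mathcal E$ is locally free. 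Since $D$ was contained in the branch divisor which has codimension $\geq 2$, $\pi_{\mathbf k}$ is in fact quasi-\'etale. It remains to check $\pi_{\mathbf k}^{[*]}\mathcal E$ is numerically flat, not merely locally free: here I would argue that numerical flatness (nefness of a locally free sheaf and its dual) is a condition on intersection numbers of the tautological bundle on the projectivization, hence is detected on the generic fiber of $p_V\circ\pi_V$ and is preserved under specialization to a general $\mathbf k$-point $u\in V$ — equivalently, one may simply carry along the numerical-flatness data in the spreading-out argument of Theorem \ref{thm-trans}, shrinking $V$ further so that $\pi^{[*]}\mathcal E_V$ is numerically flat fiberwise, and then specialize.

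The main obstacle I anticipate is the last point: ensuring that \emph{numerical flatness} (an uncountable-intersection-type or at least a nefness condition) transfers cleanly, rather than just local freeness, which is all Theorem \ref{thm-trans} states verbatim. I expect this to be handled either by (i) reformulating numerical flatness as the vanishing of $c_1$ and $c_2$ of the locally free sheaf together with semistability — all of which are closed/constructible conditions amenable to spreading out and specialization — or (ii) observing that for the locally free sheaf $\pi_{\mathbf k}^{[*]}\mathcal E$, numerical flatness over $\mathbf k$ is equivalent to numerical flatness over $\mathbb C$ after base change, and $(\pi_{\mathbf k})_{\mathbb C}^{[*]}\mathcal E_{\mathbb C}$ agrees with $\pi^{[*]}\mathcal E_{\mathbb C}$ up to the isomorphism of covers, which is numerically flat by construction. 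Option (ii) is the cleanest: nefness of a vector bundle and its dual is insensitive to the base change $\mathbf k\to\mathbb C$, so $\pi_{\mathbf k}^{[*]}\mathcal E$ is numerically flat over $\mathbf k$ as soon as its $\mathbb C$-base change is, which it is. A minor secondary point is that the cover $Y_{\mathbf k}$ produced by Theorem \ref{thm-trans} may not be connected or irreducible; one replaces it by a dominating component (or normalizes), which does not affect the quasi-\'etale property or the conclusion.
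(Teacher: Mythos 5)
Your skeleton coincides with the paper's: reduce to \(\mathbb{C}\), apply \cite[Proposition 3.1]{IMZ23} to get the cover \(\pi\colon Y\to X_{\mathbb{C}}\), and descend to \(\mathbf{k}\) via Theorem \ref{thm-trans}. You also correctly isolate the crux, namely that Theorem \ref{thm-trans} only delivers local freeness of \(\pi_{\mathbf{k}}^{[*]}\mathcal{E}\), not numerical flatness. However, both of your proposed resolutions of that crux have gaps. Option (ii) rests on the claim that \((\pi_{\mathbf{k}})_{\mathbb{C}}\colon (Y_{\mathbf{k}})_{\mathbb{C}}\to X_{\mathbb{C}}\) ``agrees with'' the original cover \(\pi\colon Y\to X_{\mathbb{C}}\). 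This is not automatic: in the spreading-out diagram, \(Y\) is the geometric generic fiber of \(Y_V\to V\) while \(Y_{\mathbf{k}}=Y_{V,u}\) is a special fiber over a closed \(\mathbf{k}\)-point \(u\), and these are a priori non-isomorphic quasi-\'etale covers of \(X_{\mathbb{C}}\) (one could try to argue constancy of the isomorphism class in the family using rigidity of covers, but you neither state nor prove this). Option (i) is worse: nefness is not a Zariski-open condition on the fibers of a family — it holds only on the complement of countably many proper closed subsets of \(V\) — so you cannot ``shrink \(V\)'' to make \(\pi^{[*]}\mathcal{E}_V\) numerically flat fiberwise; and since \(\mathbf{k}\) may be countable (it is \(\overline{\mathbb{Q}}\) in the application), a very general \(\mathbf{k}\)-point of \(V\) need not exist. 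The reformulation via \(c_1=c_2=0\) plus semistability also runs into exactly the difficulty the paper flags in Remark \ref{rmk-comparison}(ii), that orbifold second Chern classes on klt varieties singular in codimension two are delicate.

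The paper's resolution is different and avoids all of this: it never compares \((Y_{\mathbf{k}})_{\mathbb{C}}\) with \(Y\). Instead, one observes that \(\pi_{\mathbf{k}}^{[*]}\mathcal{E}\) is again pseudo-effective (reflexive pullback under a finite quasi-\'etale map preserves weak positivity) with \(\det(\pi_{\mathbf{k}}^{[*]}\mathcal{E})\cdot(\pi_{\mathbf{k}}^*H)^{n-1}=0\) by the projection formula, and that these properties persist under the base change to \(\mathbb{C}\). One then applies \cite[Proposition 3.1]{IMZ23} a \emph{second} time, now to the already locally free sheaf \((\pi_{\mathbf{k}}^{[*]}\mathcal{E})_{\mathbb{C}}\) on \((Y_{\mathbf{k}})_{\mathbb{C}}\), to conclude that it (and hence \(\pi_{\mathbf{k}}^{[*]}\mathcal{E}\) itself, since nefness is insensitive to the base change \(\mathbf{k}\to\mathbb{C}\) and descends along finite surjective covers) is numerically flat. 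You would need to add this second application, or a rigorous rigidity argument for quasi-\'etale covers in families, to close the proof.
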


\begin{proof}
Following the same reason as in Section \ref{sec-trans}, we may assume that \(\mathbf{k}\) is a subfield of \(\mathbb{C}\).
Denote by \(X_{\mathbb{C}}\coloneqq X\times_{\textup{Spec}\,\mathbf{k}}\textup{Spec}\,\mathbb{C}\) and
\(\mathcal{E}_{\mathbb{C}}, H_{\mathbb{C}}\) similarly. 
Note that \(\det(\mathcal{E}_{\mathbb{C}})\cdot H_{\mathbb{C}}^{d-1}=0\) and \(\mathcal{E}_{\mathbb{C}}\) is pseudo-effective. 
By \cite[Proposition 3.1]{IMZ23}, we know that there is a quasi-\'etale cover \(\pi\colon Y_0\to X_{\mathbb{C}}\) such that \(\pi^{[*]}\mathcal{E}_{\mathbb{C}}\) is locally free. 
By Theorem \ref{thm-trans}, we obtain a finite quasi-\'etale cover \(\pi_{\mathbf{k}}\colon Y\to X\) (defined over \(\mathbf{k}\)) such that \(\pi_{\mathbf{k}}^{[*]}\mathcal{E}\) is also locally free.
We are left to prove that \(\pi_{\mathbf{k}}^{[*]}\mathcal{E}\) and \((\pi_{\mathbf{k}}^{[*]}\mathcal{E})^{\vee}\) are both nef.
It is equivalent to proving that the locally free sheaves \((\pi_{\mathbf{k}}^{[*]}\mathcal{E})_{\mathbb{C}}\) and \((\pi_{\mathbf{k}}^{[*]}\mathcal{E})_{\mathbb{C}}^{\vee}\) are both nef.
By noting that \((\pi_{\mathbf{k}}^{[*]}\mathcal{E})_{\mathbb{C}}\) is pseudo-effective and \(\det((\pi_{\mathbf{k}}^{[*]}\mathcal{E})_{\mathbb{C}})\cdot (\pi_{\mathbf{k}}^*H)_{\mathbb{C}}^{d-1}=0\), we can conclude by \cite[Proposition 3.1]{IMZ23} once more.
\end{proof}

\begin{proof}[Proof of Theorem \ref{thm-invariant-sheaf}]
Assume that \(f^*H-H\) is ample for some ample Cartier divisor \(H\).
Hence, there is some rational number \(r>1\) such that \(f^*H-rH\) is an ample \(\mathbb{Q}\)-Cartier divisor. 
With \(f\) replaced by a suitable power, we may assume that \(r\) is an integer. 
We may further assume that \(f^*H-rH\) is very ample with \(H\) replaced by a sufficient multiple.

\begin{claim}\label{claim-twist-psef}
The sheaf $\textup{Sym}^{[m]}\mathcal{E}\otimes\mathcal{O}_X(H)$ is pseudo-effective
for any \(m\in\mathbb{Z}_{>0}\).   
\end{claim}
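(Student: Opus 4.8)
The plan is to prove Claim \ref{claim-twist-psef} by a direct bootstrapping argument: I want to show that twisting a suitable reflexive symmetric power of $\mathcal{E}$ by a small piece of $H$ is enough to make it generically globally generated after pulling back under high iterates, and then transport this positivity back down to $X$ using the hypothesis $f^{[*]}\mathcal{E}\cong\mathcal{E}$.

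\textbf{Step 1: Reduce to generic global generation of a twist.} Fix $m>0$. I would first note that $\textup{Sym}^{[m]}\mathcal{E}\otimes\mathcal{O}_X(H)$ is pseudo-effective if and only if for every ample $A$ and every $a>0$ there is $b>0$ with $\textup{Sym}^{[ab]}(\textup{Sym}^{[m]}\mathcal{E}\otimes\mathcal{O}_X(H))\otimes\mathcal{O}_X(bA)$ generically globally generated. Up to reflexive identifications this is a twist of $\textup{Sym}^{[abm]}\mathcal{E}$ by $\mathcal{O}_X(ab H+bA)$, so it suffices to produce, for each sufficiently divisible $N$, a positive twist of $\textup{Sym}^{[N]}\mathcal{E}$ by some ample that is generically globally generated — and to do this with the ample divisor controlled linearly in $N$ so that dividing by $ab$ leaves only the fixed twist $\mathcal{O}_X(H)$ in the limit.

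\textbf{Step 2: Use the iterate to manufacture global sections.} This is where $f$ enters. Since $f^{[*]}\mathcal{E}\cong\mathcal{E}$, we get $\textup{Sym}^{[N]}\mathcal{E}\cong (f^k)^{[*]}\textup{Sym}^{[N]}\mathcal{E}$ for all $k$. The idea is to pull back the relation $f^*H-rH$ very ample (with $r\in\mathbb{Z}_{>1}$): iterating, $(f^k)^*H - r^k H$ is the sum of $k$ pullbacks of a very ample divisor, hence globally generated, so $(f^k)^*H$ dominates $r^k H$ up to a globally generated divisor. Now take any nonzero reflexive section of $\textup{Sym}^{[N]}\mathcal{E}\otimes\mathcal{O}_X(\text{something ample large})$ — such sections exist because $\textup{Sym}^{[N]}\mathcal{E}$ is reflexive and twisting by a sufficiently positive ample makes it globally generated in codimension one — and push it through $(f^k)^{[*]}$. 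Because $(f^k)^{[*]}\mathcal{O}_X(H)\supseteq \mathcal{O}_X(r^kH)\otimes(\text{globally generated})$, the pulled-back sections live in $\textup{Sym}^{[N]}\mathcal{E}\otimes\mathcal{O}_X(\tfrac{1}{r^k}(\cdots))$ after absorbing the globally generated part, which shrinks the ample twist geometrically in $k$. Letting $k\to\infty$ drives the auxiliary ample contribution below $H$, giving generic global generation of $\textup{Sym}^{[N]}\mathcal{E}\otimes\mathcal{O}_X(H)$ up to an arbitrarily small ample error — precisely the pseudo-effectivity condition in Definition \ref{def-pos}(2).

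\textbf{The main obstacle} I anticipate is the bookkeeping of reflexive operations: $f^{[*]}$ does not commute with $\textup{Sym}^{(\cdot)}$ and tensor on the nose, only up to taking double duals, and $(f^k)^*$ of a globally generated sheaf need only be globally generated on a big open set, not everywhere. Since pseudo-effectivity in the sense used here is only a generic (general-point) condition, this should be harmless, but the estimates comparing $(f^k)^*H$ with $r^kH$ plus a globally generated divisor must be stated carefully — the cleanest formulation is: for each $k$ there is an effective globally generated divisor $E_k$ with $(f^k)^*H \sim r^kH + E_k$, obtained by summing $(f^j)^*(f^*H-rH)$ for $j=0,\dots,k-1$ with the appropriate $r$-weights. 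Once that identity is in hand, the rest is a formal limiting argument matching Definition \ref{def-pos}(2), with $b$ chosen as a function of $k$ and $k$ chosen large depending on $a$ and $A$.
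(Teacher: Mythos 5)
Your proposal assembles the right raw ingredients --- the telescoping identity $(f^k)^*H - r^kH=\sum_{i=1}^{k}r^{k-i}(f^{i-1})^*(f^*H-rH)$, which is ample and base point free, the invariance $\textup{Sym}^{[N]}\mathcal{E}\cong (f^k)^{[*]}\textup{Sym}^{[N]}\mathcal{E}$, and the fact that a large enough twist $\textup{Sym}^{[N]}\mathcal{E}\otimes\mathcal{O}_X(tH)$ is globally generated --- and these are exactly the ingredients of the paper's proof. But the step that transports the positivity back down to $X$ is where your argument breaks. Pulling back a section of $\textup{Sym}^{[N]}\mathcal{E}\otimes\mathcal{O}_X(tH)$ under $f^k$ produces a section of $(f^k)^{[*]}\bigl(\textup{Sym}^{[N]}\mathcal{E}\otimes\mathcal{O}_X(tH)\bigr)\cong \textup{Sym}^{[N]}\mathcal{E}\otimes\mathcal{O}_X(tr^kH+tE_k)$, i.e.\ the twist is \emph{multiplied} by $r^k$, not divided; there is no operation on sections that realizes your ``$\mathcal{O}_X(\tfrac{1}{r^k}(\cdots))$'' and no way to ``drive the auxiliary ample contribution below $H$'' by iterating. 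The correct closing move, which is what the paper does, is the opposite one: show that the \emph{pullback} $(f^l)^{[*]}\bigl(\textup{Sym}^{[m]}\mathcal{E}\otimes\mathcal{O}_X(H)\bigr)\cong \textup{Sym}^{[m]}\mathcal{E}\otimes\mathcal{O}_X(r^lH)\otimes\mathcal{O}_X\bigl((f^l)^*H-r^lH\bigr)$ is globally generated once $r^l\ge t_m$, hence pseudo-effective, and then invoke the descent statement \cite[Proposition 2.6]{Mat23}: a reflexive sheaf whose reflexive pullback under a finite surjective morphism is pseudo-effective is itself pseudo-effective. Global generation does not descend along finite covers, so this descent lemma (a weak-positivity statement, not a statement about sections) is the essential missing idea; without it your limiting argument does not close.

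A secondary problem is your Step 1 reduction: $\textup{Sym}^{[ab]}\bigl(\textup{Sym}^{[m]}\mathcal{E}\otimes\mathcal{O}_X(H)\bigr)$ is \emph{not} identified with $\textup{Sym}^{[abm]}\mathcal{E}\otimes\mathcal{O}_X(abH)$ up to reflexive hulls; there is only a generically surjective map from the composite symmetric power onto the single one, so generic global generation of a twist of $\textup{Sym}^{[abm]}\mathcal{E}$ does not imply generic global generation of the sheaf the definition actually requires. (The paper uses that map in the legitimate direction later, in the body of the proof of Theorem \ref{thm-invariant-sheaf}, to pass \emph{from} the composite power \emph{to} the single one.) This particular slip is repairable --- the global-generation argument applies to $\textup{Sym}^{[ab]}\textup{Sym}^{[m]}\mathcal{E}$ directly --- but the missing descent step in the previous paragraph is not.
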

\begin{proof}
As \(H\) is ample, for each reflexive sheaf \(\textup{Sym}^{[m]}\mathcal{E}\), there exists some \(t_m\in\mathbb{Z}_{>0}\) such that \(\textup{Sym}^{[m]}\mathcal{E}\otimes \mathcal{O}_X(tH)\) is globally generated whenever \(t\geq t_m\). 
Besides, for any \(l\in\mathbb{N}\), we have
\[
(f^l)^*H-r^lH=\sum_{i=1}^lr^{l-i}(f^{i-1})^*(f^*H-rH).
\]
As \(f^*H-rH\) is very ample by assumption, the above sum 
\((f^l)^*H-r^lH\) is ample and base point free for any $\ell>0$.
Fix one \(l\) such that \(r^l>t_m\).
Hence, it follows that
\begin{align*}
(f^l)^{[*]}(\textup{Sym}^{[m]}\mathcal{E}\otimes\mathcal{O}_X(H))&\cong\textup{Sym}^{[m]}\mathcal{E}\otimes\mathcal{O}_X((f^{l})^*H)\\
&\cong\textup{Sym}^{[m]}\mathcal{E}\otimes\mathcal{O}_X(r^lH)\otimes\mathcal{O}_X((f^l)^*H-r^lH)
\end{align*}
is globally generated and thus pseudo-effective.
Note that the above isomorphism holds over a big open subset (whose complement is of codimension \(\geq 2\) in \(X\)) and hence the isomorphism holds by the unique extension of reflexive sheaves. 
By \cite[Proposition 2.6]{Mat23} (the same proof of which works for the base field \(\mathbf{k}\)), we obtain that \(\textup{Sym}^{[m]}\mathcal{E}\otimes\mathcal{O}_X(H)\) is pseudo-effective for any \(m\). 
Our claim is thus proved.
\end{proof}

Let us come back to the proof of Theorem \ref{thm-invariant-sheaf}. 
Let \(A\) be any ample Cartier divisor on \(X\) and let \(a\) be any positive integer. 
Take a sufficiently large integer \(u\) such that \(uA-H\) is ample. 
As \(\textup{Sym}^{[a\cdot u]}\mathcal{E}\otimes\mathcal{O}_X(H)\) is pseudo-effective, 
 for the given ample divisor \(uA-H\) and the positive integer 1,  
there exists some \(b\in \mathbb{Z}_{>0}\) such that
\[
\textup{Sym}^{[b\cdot 1]}(\textup{Sym}^{[au]}\mathcal{E}\otimes\mathcal{O}_X(H))\otimes\mathcal{O}_X(b(uA-H))
\]
is generically globally generated. 
In particular, 
\[\textup{Sym}^{[b]}(\textup{Sym}^{[au]}\mathcal{E})\otimes\mathcal{O}_X(buA)\]
is generically globally generated.
Now that we have the following generically  surjective map
\[
\textup{Sym}^{[b]}\textup{Sym}^{[au]}\mathcal{E}\otimes\mathcal{O}_X(buA)\to \textup{Sym}^{[abu]}\mathcal{E}\otimes \mathcal{O}_X(buA),
\]
we have found a positive integer \(bu\) such that \( \textup{Sym}^{[abu]}\mathcal{E}\otimes \mathcal{O}_X(buA)\) is globally generated along a general point. 
This finishes the first half of the theorem (see Definition \ref{def-pos}). 

For the second half, we first claim the following.
\begin{claim}\label{claim-determinant}
We have \(\det(\mathcal{E})\cdot H^{n-1}=0\) for the above given ample Cartier divisor \(H\).
\end{claim}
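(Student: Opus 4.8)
The goal is $\det(\mathcal{E})\cdot H^{n-1}=0$, where $\mathcal{E}$ is reflexive with $f^{[*]}\mathcal{E}\cong\mathcal{E}$ and $f$ is int-amplified with $f^*H-rH$ very ample ($r\in\mathbb Z_{>0}$). The first half of the theorem already gives that $\mathcal{E}$ is pseudo-effective, which should force $\det(\mathcal{E})\cdot H^{n-1}\ge 0$ — indeed a pseudo-effective reflexive sheaf has pseudo-effective determinant (one can see this directly: $\operatorname{Sym}^{[ab]}\mathcal{E}\otimes\mathcal O_X(bA)$ globally generated at a general point produces, after taking reflexive determinants, a section of a multiple of $\det\mathcal E + (\text{something})A$, and letting $a\to\infty$ pushes the positivity into $\det\mathcal E$). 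So the content of the claim is the reverse inequality $\det(\mathcal{E})\cdot H^{n-1}\le 0$.

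The plan is to exploit the eigenvalue behavior of $f^*$ on $\operatorname{N}_{n-1}(X)$ (equivalently on $\N_{n-1}(X)$). Taking reflexive determinants of the isomorphism $f^{[*]}\mathcal{E}\cong\mathcal{E}$ and using that determinant commutes with reflexive pullback over the big open locus where $\mathcal{E}$ is locally free, we get $f^*(\det\mathcal{E})\equiv_w \det\mathcal{E}$ as a Weil divisor class — more precisely $\det(\mathcal{E})$ is a weakly-numerically $f^*$-invariant class in $\N_{n-1}(X)$. Now I invoke the key input on int-amplified endomorphisms (\cite[Theorem 3.3]{Men20}, already cited in the discussion preceding the theorem): every eigenvalue of $f^*|_{\N_{n-1}(X)}$ has modulus $>1$. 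An $f^*$-invariant vector is an eigenvector with eigenvalue $1$, which is impossible unless $\det(\mathcal{E})\equiv_w 0$; in particular $\det(\mathcal{E})\cdot H^{n-1}=0$. This is exactly the ``Weil divisor case'' remark made just before the statement of Theorem \ref{thm-invariant-sheaf}, so the claim is really just that observation applied to $\det\mathcal E$.

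The step I expect to require the most care is the passage from the sheaf isomorphism to a genuine numerical identity $f^*(\det\mathcal E)\equiv_w\det\mathcal E$. One must check: (i) $f^{[*]}(\det\mathcal E)\cong\det(f^{[*]}\mathcal E)$ as reflexive sheaves — true because both restrict to $\det(f^*\mathcal E|_U)=\wedge^{\operatorname{rank}}f^*(\mathcal E|_U)$ on a big open $U$ over which $\mathcal E$ is locally free and $f$ is unramified-enough, and reflexive sheaves are determined on such $U$; (ii) for a Cartier-in-codimension-one divisor class $L$, the operation $L\mapsto f^{[*]}\mathcal O_X(L)$ agrees with the pull-back $f^*$ on $\N_{n-1}(X)$ used in \cite{Men20} — this is the standard compatibility of reflexive pullback of divisorial sheaves with cycle-theoretic pull-back of Weil divisors under a finite flat (hence generically étale) morphism. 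Granting (i) and (ii), combining with $f^{[*]}\mathcal E\cong\mathcal E$ yields $f^{[*]}\mathcal O_X(\det\mathcal E)\cong\mathcal O_X(\det\mathcal E)$, hence $f^*[\det\mathcal E]=[\det\mathcal E]$ in $\N_{n-1}(X)$, and the eigenvalue dichotomy of \cite[Theorem 3.3]{Men20} finishes it. Since the paper has already flagged this as the Weil-divisor special case, I would keep the write-up short: state $f^{[*]}\det\mathcal E\cong\det\mathcal E$, pass to $\N_{n-1}(X)$, and quote \cite[Theorem 3.3]{Men20}.
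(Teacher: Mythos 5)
Your proposal is correct and follows essentially the same route as the paper: take determinants of $f^{[*]}\mathcal{E}\cong\mathcal{E}$ over a big open set where $\mathcal{E}$ is locally free, extend reflexively to get $f^{[*]}\det\mathcal{E}\cong\det\mathcal{E}$, i.e.\ $f^*D\sim D$ for the associated Weil divisor, and then invoke the int-amplified eigenvalue property on $\N_{n-1}(X)$ (the paper cites \cite[Lemma 3.8]{Men20} at this last step, which is the packaged form of the Theorem~3.3 eigenvalue argument you describe). The preliminary discussion of the inequality $\det(\mathcal{E})\cdot H^{n-1}\ge 0$ from pseudo-effectivity is unnecessary, since the eigenvalue argument gives the vanishing outright.
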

\begin{proof}[Proof of Claim \ref{claim-determinant}]
Note that \(X\) is normal and $\mathcal{E}$ is torsion free. 
Let \(U\) be a Zariski open subset of $X$ with $\Codim X\backslash U\ge 2$ such that \(\mathcal{E}\) and \(\det(\mathcal{E})\) are both locally free on $U$.
Therefore, we have $$(f|_{f^{-1}(U)})^*(\det(\mathcal{E}|_U))=\det((f|_{f^{-1}(U)})^*\mathcal{E}|_U)$$ by the functoriality. 
By the unique reflexive extension, we have \(f^{[*]}(\det(\mathcal{E}))\cong\det(\mathcal{E})\). 
Let \(\det(\mathcal{E})=\mathcal{O}_X(D)\) be the divisorial sheaf of a Weil divisor \(D\).
Then we have \(f^*D\sim D\) and thus 
we conclude our claim by applying \cite[Lemma 3.8]{Men20}.
\end{proof}
Let us come back to the proof of Theorem \ref{thm-invariant-sheaf}.
By Claim \ref{claim-determinant}, together with the pseudo-effectiveness of  \(\mathcal{E}\), it follows from Proposition \ref{prop-imz-k} that there is a quasi-\'etale cover \(\pi\colon Y\to X\) (defined over \(\mathbf{k}\)) such that \(\pi^{[*]}\mathcal{E}\) is a numerically flat locally free sheaf.
\end{proof}

We close this section with the following theorem, which slightly generalizes \cite[Theorem 6.1]{DLB22} (cf.~\cite[Theorem 1.4]{Iwa22}) from the field of complex numbers to any algebraically closed field of characteristic zero (cf.~Remark \ref{rmk-diff-mrc}).

\begin{theorem}\label{thm-struc-k}
Let $X$ be a klt projective variety over an algebraically closed field $\mathbf{k}$.
Let $D$ be a reduced divisor such that \(\Omega_X^{[1]}(\textup{log}\,D)\) is numerically flat and locally free. 
Then \(\widetilde{q}(X)<\infty\) holds (see~Definition \ref{defn-irreg}). 
Suppose further \((X,D)\) is a projective lc pair and \(q(X)=\widetilde{q}(X)\).
Then \(\alb_X\colon(X,D)\to A\) is a toric fibration (see~Definition \ref{defn-locally-trivial}). 
\end{theorem}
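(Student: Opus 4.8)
The plan is to descend the entire statement to the complex numbers, apply the Druel--Lo~Bianco structure theorem \cite[Theorem 6.1]{DLB22}, and then descend the conclusion back to $\mathbf{k}$ using the Lefschetz-principle machinery set up in Section~\ref{sec-trans}. First I would fix an embedding $\mathbf{k}\hookrightarrow\mathbb{C}$ (legitimate since the field of definition is finitely generated over $\mathbb{Q}$) and base-change to get $X_{\mathbb{C}}$, $D_{\mathbb{C}}$. The key compatibility is that forming $\Omega^{[1]}(\textup{log}\,-)$ commutes with flat base change away from a codimension-two set and hence, after reflexive hull, globally; so $\Omega^{[1]}_{X_{\mathbb{C}}}(\textup{log}\,D_{\mathbb{C}})$ is again numerically flat and locally free (numerical flatness and local freeness are both preserved under the faithfully flat base change $\textup{Spec}\,\mathbb{C}\to\textup{Spec}\,\mathbf{k}$, in both directions). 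Likewise $X_{\mathbb{C}}$ stays klt and $(X_{\mathbb{C}},D_{\mathbb{C}})$ stays lc. Then \cite[Theorem 6.1]{DLB22} gives $\widetilde q(X_{\mathbb{C}})<\infty$, and under the extra hypothesis $q(X_{\mathbb{C}})=\widetilde q(X_{\mathbb{C}})$ it gives that $\alb_{X_{\mathbb{C}}}\colon(X_{\mathbb{C}},D_{\mathbb{C}})\to A_{\mathbb{C}}$ is an (analytically locally trivial) toric fibration.

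Next I would descend these two conclusions. For $\widetilde q$: an upper bound on $q$ over all quasi-\'etale covers is a purely algebro-geometric finiteness statement, and by the same spreading-out argument as in Theorem~\ref{thm-trans} (together with the fact, used there, that $q$ of a fibre equals $q$ of the generic fibre after shrinking the base $V$, via upper semicontinuity \cite[Chapter III, Theorem 12.8]{Har77}), quasi-\'etale covers of $X$ and of $X_{\mathbb{C}}$ correspond and have the same irregularity; hence $\widetilde q(X)=\widetilde q(X_{\mathbb{C}})<\infty$. In particular $q(X)=q(X_{\mathbb{C}})$, so the hypothesis $q(X)=\widetilde q(X)$ is equivalent to its complex counterpart. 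For the toric fibration: the Albanese morphism commutes with base field extension (it is characterized by a universal property that base-changes), so $\alb_X\times_{\mathbf{k}}\mathbb{C}=\alb_{X_{\mathbb{C}}}$. Thus I need: ``the fibres of $\alb_{X}$ together with $D$ are toric pairs, and the fibration is analytically locally trivial'' descends from $\mathbb{C}$ to $\mathbf{k}$. Being a toric pair is detected by $\Omega^{[1]}_F(\textup{log}\,D|_F)\cong\mathcal{O}_F^{\oplus \dim F}$ together with the existence of a dense torus orbit — I would phrase ``$\alb_X$ is a toric fibration'' via the freeness of the relative reflexive log-differential sheaf on the general fibre plus the existence of a big open torus, both of which spread out over and specialize from a suitable $\textup{Spec}\,R$ exactly as in Section~\ref{sec-trans}. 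Local triviality in the analytic topology is not itself an algebraic statement, but one can instead descend the equivalent algebraic consequence — here I would invoke Theorem~\ref{thm-toric-trivial} (a splitting toric fibration is Zariski locally trivial), or directly the characterization of toric fibrations that the paper uses, so that only Zariski-open statements need to be transferred.

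The main obstacle I anticipate is precisely this last descent of the ``toric fibration'' conclusion: "analytically locally trivial" is phrased in the transcendental topology, which does not literally make sense over an arbitrary $\mathbf{k}$, so one has to be careful to isolate an algebraic reformulation (freeness of $\Omega^{[1]}_{X/A}(\textup{log}\,D)$ along fibres, or Zariski-local triviality after the reductions of Theorem~\ref{thm-toric-trivial}) and show \emph{that} is what descends — and then argue it is equivalent, over any algebraically closed field of characteristic zero, to the definition of toric fibration used in Definition~\ref{defn-locally-trivial}. Compared to this, verifying that klt/lc, numerical flatness, local freeness, the formation of $\Omega^{[1]}(\textup{log}\,D)$, and the Albanese are all stable under the base changes $\mathbf{k}\leftrightarrow\mathbb{C}$ and under spreading out to $\textup{Spec}\,R$ and specializing to a general $\mathbf{k}$-point is routine and follows the template already executed in Theorem~\ref{thm-trans} and Proposition~\ref{prop-imz-k}.
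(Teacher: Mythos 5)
Your overall architecture --- base change to $\mathbb{C}$, apply \cite[Theorem 6.1]{DLB22}, and descend using the fact that the Albanese morphism commutes with extension of the base field --- matches the paper's. But there is a genuine gap at the central step: you assert that, under the hypothesis $q(X)=\widetilde{q}(X)$, \cite[Theorem 6.1]{DLB22} ``gives that $\alb_{X_{\mathbb{C}}}$ is a toric fibration.'' It does not. That theorem only produces a toric fibration $u\colon(X_{\mathbb{C}},D_{\mathbb{C}})\to Q$ onto \emph{some} smooth projective variety $Q$ with $c_1(Q)=0$; a priori $u$ is not the Albanese map, $Q$ is not an abelian variety, and $u$ need not even be defined over $\mathbf{k}$ (Remark \ref{rmk-diff-mrc} explicitly records that this last point cannot be shown directly). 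Identifying $u$ with $\alb_{X_{\mathbb{C}}}$ is precisely where the hypothesis $q(X)=\widetilde{q}(X)$ does its work, and it requires an argument you have omitted: from the generically surjective maps $T_{X_{\mathbb{C}}}(-\textup{log}\,D_{\mathbb{C}})\to T_{X_{\mathbb{C}}}\to u^*T_Q$ one gets that $T_Q$ is almost nef, whence $Q$ is an \'etale quotient of an abelian variety by \cite[Corollary 3.2]{IMZ23}; the rigidity lemma factors $\alb_{X_{\mathbb{C}}}$ through $u$; and the dimension count $\dim Q\ge\dim A=q(X)=\widetilde{q}(X)\ge\dim Q$ (using surjectivity of the Albanese map via \cite[Lemma 6.1]{IMZ23}) forces $Q\to A$ to be finite \'etale, so $Q$ is abelian and $u=\alb_{X_{\mathbb{C}}}$ by the universal property. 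Without this identification, your descent step has nothing to hang on. Similarly, the finiteness $\widetilde{q}(X_{\mathbb{C}})<\infty$ is not part of \cite[Theorem 6.1]{DLB22}; it is deduced from the almost nefness of $T_{X_{\mathbb{C}}}$ together with \cite[Lemma 6.1]{IMZ23}, and then transferred to $\mathbf{k}$ via Theorem \ref{thm-trans} essentially as you describe.

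A secondary point: once $u=\alb_{X_{\mathbb{C}}}$ is established, the elaborate machinery you propose for descending the toric-fibration property (spreading out freeness of relative log differentials, Zariski-local triviality, etc.) becomes unnecessary. The morphism $\alb_X$ is already defined over $\mathbf{k}$ and base-changes to $\alb_{X_{\mathbb{C}}}$, so the only thing to observe is that the toric-fibration property of this fixed $\mathbf{k}$-morphism is checked after base change to $\mathbb{C}$; no further transfer is needed. So the obstacle you flag as the main difficulty dissolves, while the difficulty the paper actually has to overcome --- showing the Druel--Lo Bianco fibration \emph{is} the Albanese --- is the step your proposal skips.
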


\begin{proof}
With the same reason as in Section \ref{sec-trans} and following the notations therein, we may assume that \(\mathbf{k}\) is a subfield of \(\mathbb{C}\). 
Note that there is a generically surjective map
\(T_{X_{\mathbb{C}}}(-\textup{log}\,D_{\mathbb{C}})\to T_{X_{\mathbb{C}}}\) 
and the logarithmic tangent bundle \(T_{X_{\mathbb{C}}}(-\textup{log}\,D_{\mathbb{C}})\) is numerically flat and thus almost nef. 
By \cite[Lemma 2.6 (4)]{IMZ23}, we know that the tangent sheaf \(T_{X_{\mathbb{C}}}\) is also almost nef. 
Then it follows from Theorem \ref{thm-trans} and \cite[Lemma 6.1]{IMZ23} that
\(\widetilde{q}(X)=\widetilde{q}(X_{\mathbb{C}})<\infty\). 
Also, from \cite[Theorem 6.1]{DLB22}, we obtain a toric fibration
\(u\colon (X_{\mathbb{C}},D_{\mathbb{C}})\to Q\) onto a smooth projective variety \(Q\) with \(c_1(Q)=0\). 
Moreover, since \(u\) is surjective and \(Q\) is smooth, by dualizing the sheaf sequence
\(0\to u^*\Omega_Q\to \Omega_{X_{\mathbb{C}}}\),  
we obtain another generically surjective map
\(T_{X_{\mathbb{C}}}\to u^*T_Q\). 
This implies that \(T_Q\) is also almost nef (see~\cite[Lemma 2.6 (2)]{IMZ23}). 
Since \(K_Q\equiv 0\), by applying \cite[Corollary 3.2]{IMZ23}, \(Q\) is an \'etale quotient of an abelian variety. 

We claim that \(u\) is indeed the Albanese morphism of \(X_{\mathbb{C}}\) and \(Q\) is an abelian variety. 
Indeed, as the Albanese map \(\alb_{X_{\mathbb{C}}}\colon X_{\mathbb{C}}\to A\) contracts all of the fibers of \(u\), it follows from the rigidity lemma \cite[Lemma 1.15]{Deb01} that \(\alb_{X_{\mathbb{C}}}\) factors through \(u\). 
On the other hand, by \cite[Lemma 6.1]{IMZ23}, \(\alb_{X_{\mathbb{C}}}\) is surjective and thus \(\dim Q\geq \dim A=q(X)=\widetilde{q}(X)\geq \dim Q\).
This implies that \(Q\to A\) is a finite \'etale morphism and in particular, \(Q\) is an abelian variety.  
By the universality of the Albanese morphism, \(u\) also factors through \(\alb_{X_{\mathbb{C}}}\), which concludes the proof of our claim.

Finally, note that the Albanese morphism is independent of the base field.
So the theorem is proved.
\end{proof}

\begin{remark}\label{rmk-diff-mrc}
In the proof of Theorem \ref{thm-struc-k}, we are unable to show that the initial toric fibration \(u\) in \cite[Theorem 6.1]{DLB22} is defined over \(\mathbf{k}\). However, after a base change, and in light of Theorem \ref{thm-trans}, we are in a situation of Albanese morphisms, which are independent of the choice of the base field.
\end{remark}

\section{Zariski local triviality and splitting of toric fibration}\label{sec-fib}
In this section, we study the toric fibration and its equivariant lifting.
We refer the reader to Definition \ref{defn-locally-trivial} for the relevant notion. 
We thank J. Xie and D.-Q. Zhang for the communication of this section; the proofs of Lemma \ref{lem-basechange} and Theorem \ref{thm-toric-trivial} are essentially borrowed from their ongoing collaborative work with the first author \cite{MXZ24}.

We begin with the following lemma, which demonstrates that after an equivariant base change, a horizontal subvariety of a fibration will be of splitting type.

\begin{lemma}\label{lem-basechange}
Let \(\pi\colon X\to Y\) be a fibration of normal projective varieties with connected fibers. 
Let $D$ be a closed subvariety with each irreducible component dominating $Y$. 
Then we have the following commutative diagram
\[
\xymatrix{
\widehat{X}\ar[r]^{\widehat{\pi}}\ar[d]_{p_X}&\widehat{Y}\ar[d]^{p_Y}\\
X\ar[r]^{\pi}&Y
}
\]
where \(p_Y\colon\widehat{Y}\to Y\) is a finite surjective from a normal projective variety \(\widehat{Y}\), \(\widehat{X}\) is the main component of the normalization of the fiber product \(X\times_Y\widehat{Y}\), and \(\widehat{\pi}|_{\widehat{D_i}}\colon\widehat{D_i}\to\widehat{Y}\) has irreducible general fibers for any irreducible component \(\widehat{D_i}\) of \(p_X^{-1}(D)\). 

Moreover, for any  surjective endomorphisms \(f\colon X\to X\) and \(g\colon Y\to Y\) such that \(\pi\circ f=g\circ \pi\) and \(f(D)=D\), we have the following equivariant dynamical systems:
$$\xymatrix{
\save[]+<-1.4pc,0.15pc>*{\widehat{f} \acts} \restore \widehat{X} \ar[r]^{\widehat{\pi}}\ar[d]_{p_X} &\widehat{Y}\ar[d]^{p_Y}\save[]+<1.4pc,0.15pc>*{\racts\widehat{g}} \restore \\
\save[]+<-1.4pc,0.15pc>*{f \acts} \restore  X \ar[r]^{\pi} &Y\save[]+<1.4pc,0.15pc>*{\racts g} \restore 
}$$
where \(\widehat{f}\) and \(\widehat{g}\) are the induced surjective endomorphisms.
\end{lemma}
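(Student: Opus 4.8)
The strategy is to build $\widehat Y$ in two stages — a cover that ``spreads out the algebraic part of the generic fibres of the $D_i$'', then an enlargement making the cover stable under $g$ — and to transport $\widehat X,\widehat f$ by the fibre product. First I would treat the non-dynamical part. For each irreducible component $D_i$ of $D$, the map $\pi|_{D_i}\colon D_i\to Y$ is dominant, so the algebraic closure $N_i$ of $\mathbf{k}(Y)$ inside $\mathbf{k}(D_i)$ is a finite extension of $\mathbf{k}(Y)$ (characteristic zero). Pick a finite Galois extension $M/\mathbf{k}(Y)$ containing all the $N_i$ and let $\widehat Y_0\to Y$ be the normalization of $Y$ in $M$ — a finite surjective morphism from a normal projective variety with $\mathbf{k}(\widehat Y_0)\supseteq M$ containing the Galois closure of each $N_i/\mathbf{k}(Y)$. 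I claim that then \emph{any} finite cover $p_Y\colon\widehat Y\to Y$ factoring through $\widehat Y_0$ already yields the splitting: letting $\widehat X$ be the normalization of the main component of $X\times_Y\widehat Y$ (the unique component dominating $\widehat Y$, using that the generic fibre of $\pi$ is geometrically integral — it is normal since $X$ is, and geometrically connected by the connectedness of the fibres of $\pi$ in characteristic zero), one checks over the generic point of $\widehat Y$ that for each $i$
\[
\mathbf{k}(D_i)\otimes_{\mathbf{k}(Y)}\mathbf{k}(\widehat Y)\;\cong\;\prod_{\sigma}\big(\mathbf{k}(D_i)\otimes_{N_i,\,\sigma}\mathbf{k}(\widehat Y)\big),
\]
the product running over the $\mathbf{k}(Y)$-embeddings $\sigma\colon N_i\hookrightarrow\mathbf{k}(\widehat Y)$ (all of them occur because $\mathbf{k}(\widehat Y)$ contains the Galois closure of $N_i$). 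Since $N_i$ is algebraically closed in $\mathbf{k}(D_i)$, the extension $\mathbf{k}(D_i)/N_i$ is regular, so each factor is a domain in whose fraction field $\mathbf{k}(\widehat Y)$ is algebraically closed; as these fraction fields are exactly the function fields $\mathbf{k}(\widehat D_i)$ of the components $\widehat D_i$ of $p_X^{-1}(D_i)$, the general fibre of $\widehat\pi|_{\widehat D_i}$ is geometrically irreducible.

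For the equivariant statement I would invoke the standard equivariant-cover mechanism of the EMMP toolbox (as in \cite{MXZ24}; cf.\ \cite{Men20,MZ20}): given the surjective endomorphism $g$ of $Y$ and the finite cover $\widehat Y_0\to Y$, there is a finite cover $p_Y\colon\widehat Y\to Y$ factoring through $\widehat Y_0$ and carrying a lift $\widehat g\in\SEnd(\widehat Y)$ with $p_Y\circ\widehat g=g\circ p_Y$. Concretely one restricts to a dense open $U\subseteq Y$ over which the covers involved are étale and with $g(U)\subseteq U$, uses that the étale fundamental group $\pi_1(U)$ is topologically finitely generated (so only finitely many covers of $Y$ of a bounded degree are étale over $U$), and takes $\widehat Y$ to be the compositum of the iterated pullbacks $(g^n)^*\widehat Y_0$, which stabilizes after finitely many steps and is $g$-stable by construction. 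With $\widehat Y$ so chosen, set $\widehat X$ as above. Since $f$ is finite (a surjective self-map of a projective variety) and $\pi\circ f=g\circ\pi$, the pair $(f,\widehat g)$ defines an endomorphism of $X\times_Y\widehat Y$ preserving the main component (its second projection surjects onto $\widehat Y$), hence descends after normalization to $\widehat f\in\SEnd(\widehat X)$ with $\widehat\pi\circ\widehat f=\widehat g\circ\widehat\pi$ and $p_X\circ\widehat f=f\circ p_X$; both $\widehat f$ and $\widehat g$ are surjective, and $\widehat f\big(p_X^{-1}(D)\big)\subseteq p_X^{-1}(D)$ because $f(D)=D$.

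The hard part will be the dynamical enlargement: producing a single cover $\widehat Y$ that both factors through $\widehat Y_0$ (so the splitting of the first paragraph applies) and is $g$-stable \emph{with a lift of $g$ itself}, not merely of a power. The delicate points are the existence of a dense open $U$ with $g(U)\subseteq U$ over which all relevant covers remain étale — obtained by deleting a suitable $g^{-1}$-invariant closed subset containing the branch locus of $\widehat Y_0\to Y$ — and the stabilization of the tower $\{(g^n)^*\widehat Y_0\}_{n\ge 0}$, which rests on the finiteness of the set $\{g_*^{-n}(H_0)\}_{n\ge 0}$ of open subgroups of $\pi_1(U)$, each of index at most $[\pi_1(U):H_0]$, where $H_0$ is the subgroup corresponding to $\widehat Y_0$; indeed $\bigcap_{n\ge 0} g_*^{-n}(H_0)$ is then a finite intersection and is carried into itself by $g_*$. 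Everything else — commutativity of the square, normality of $\widehat X$ and $\widehat Y$, and the regular-extension computation controlling the components of $p_X^{-1}(D)$ — is routine once these are in place.
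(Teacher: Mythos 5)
Your first (non-dynamical) half is essentially a clean one-step packaging of what the paper does by iterated Stein factorization, and the field theory is correct as far as it controls the components of $D_i\times_Y\widehat Y$ dominating $\widehat Y$: these do correspond to the factors $\mathbf{k}(D_i)\otimes_{N_i,\sigma}\mathbf{k}(\widehat Y)$, and regularity of $\mathbf{k}(D_i)/N_i$ gives geometric irreducibility of their generic fibres. But the lemma concerns the components of $p_X^{-1}(D)$ inside $\widehat X$, which is the normalization of (the main component of) the fibre product. Normalization can split a subvariety contained in the non-normal locus into several components whose function fields are proper finite extensions of $\mathrm{Frac}\bigl(\mathbf{k}(D_i)\otimes_{N_i,\sigma}\mathbf{k}(\widehat Y)\bigr)$, and $\mathbf{k}(\widehat Y)$ need not stay algebraically closed in those. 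So the claim that ``any cover factoring through $\widehat Y_0$ already yields the splitting'' is not yet justified. The paper's proof is robust to this precisely because it iterates the base change on the actual components of $p_X^{-1}(D)$ in the normalized total space until all of them split (termination coming from the bounded, strictly increasing component count); you would need to iterate likewise, or enlarge $M$ a posteriori.

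The equivariant half is where the proposal genuinely breaks down. Your construction requires a dense open $U\subseteq Y$ with $g(U)\subseteq U$ over which $\widehat Y_0\to Y$ (hence all the pullbacks $(g^n)^*\widehat Y_0$) is \'etale. The cover $\widehat Y_0\to Y$ is of Stein-factorization type, so it is in general branched along a divisor $B$, and the condition $g(U)\subseteq U$ together with $U\cap B=\emptyset$ forces $U\subseteq\bigcap_{n\ge 0}g^{-n}(Y\setminus B)$. For a non-invertible $g$ the backward orbit $\bigcup_{n\ge 0}g^{-n}(B)$ is typically Zariski dense (already for $Y=\mathbb{P}^1$, $g(z)=z^2$ and $B$ a non-preperiodic point), and nothing in the hypotheses makes $B$ backward $g$-invariant; so the ``suitable $g^{-1}$-invariant closed subset containing the branch locus'' that you propose to delete need not exist, and the subgroup $H_0\le\pi_1(U)$ you intersect is then not even defined. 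The paper avoids all of this by exploiting functoriality rather than fundamental groups: since $f(D)=D$ and $\pi\circ f=g\circ\pi$, the endomorphism $f|_D$ lifts to the normalization $\overline{D}$, and the Stein factorization $\overline{D}\to\widehat Y\to Y$ is functorial, so $g$ descends to a surjective endomorphism $\widehat g$ of $\widehat Y$ by \cite[Lemma 5.2]{CMZ20} --- a lift of $g$ itself, with no invariant open sets and no iteration. I would replace your entire ``dynamical enlargement'' step by this functoriality argument; your fibre-product construction of $\widehat f$ from $(f,\widehat g)$ is then fine.
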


\begin{proof}
It suffices for us to consider the case when $D$ is irreducible.
Let $n_D\colon\overline{D}\to D$ be the normalization.
Let $\sigma\colon\overline{D}\to \widehat{Y}$ and $p_Y\colon\widehat{Y}\to Y$ be the Stein factorization of $\pi|_D\circ n_D$.
If $\deg (p_Y)=1$, then $\pi|_D\colon D\to Y$ has irreducible general fibers and we do no base change. 
Suppose $\deg (p_Y)>1$. 
Then the natural embedding $\widehat{Y}\hookrightarrow \widehat{Y}\times_Y \widehat{Y}$ implies that $\widehat{Y}\times_Y \widehat{Y}$ splits into at least two irreducible components, noting that the surjective morphism \(\sigma\) induces a surjective morphism \(\overline{D}\times_Y\widehat{Y}\to \widehat{Y}\). 
This shows that $D\times_Y \widehat{Y}$ contains more irreducible components (dominating $Y$) than $D$. 
Note that $p_X^{-1}(D)=D\times_Y \widehat{Y}$ as sets.
So we can repeat the above operation for each irreducible component $\widehat{D}_i$ (dominating $Y$) of $p_X^{-1}(D)$ until each $\widehat{\pi}|_{\widehat{D}_i}\colon\widehat{D}_i\to \widehat{Y}$ has irreducible general fibers. 
The first half of the lemma is thus completed. 

For the second half, we note that there is a surjective endomorphism $h\colon \overline{D}\to \overline{D}$ such that $f|_D\circ n_D=n_D\circ h$.
Then it follows from  \cite[Lemma 5.2]{CMZ20} that there is a surjective endomorphism $\widehat{g}\colon \widehat{Y}\to \widehat{Y}$ commuting with \(h\) and \(g\).
So we obtain our lemma.
\end{proof}

\begin{lemma}\label{lem-toric-splitting-bc}
With the same assumption as in Lemma \ref{lem-basechange}, suppose further that \(f\) is int-amplified and \(\pi\colon (X,D)\to Y\) is a toric firbation over an abelian variety $Y$.
Then we can further require \(\widehat{\pi}\colon (\widehat{X}, p_X^{-1}(D))\to \widehat{Y}\) to be a \textbf{splitting toric} fibration over an abelian variety \(\widehat{Y}\). 
\end{lemma}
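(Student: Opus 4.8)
The goal is to upgrade the toric fibration $\pi\colon(X,D)\to Y$ to a \emph{splitting} toric fibration after an equivariant base change, using Lemma \ref{lem-basechange} applied to the irreducible components of $D$. The strategy is: first apply Lemma \ref{lem-basechange} with the horizontal divisor being $D$ (all components of $D$ dominate $Y$ because $\pi$ is a toric fibration, so a typical fiber $F$ is a toric pair and $D|_F=B_F$ is the toric boundary, whose components all meet $F$), obtaining a finite cover $p_Y\colon\widehat{Y}\to Y$ and the main component $\widehat{X}$ of the normalization of $X\times_Y\widehat{Y}$, together with endomorphisms $\widehat{f},\widehat{g}$ making the square equivariant. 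Second, I would check that $\widehat{\pi}\colon(\widehat{X},p_X^{-1}(D))\to\widehat{Y}$ is again a toric fibration: since base change commutes with taking fibers and normalization is a local (finite, isomorphism over the generic fiber) operation, a general fiber of $\widehat{\pi}$ is a finite étale cover of a general fiber of $\pi$ compatible with the boundary; but a quasi-étale (in fact étale, since we may arrange the cover to be étale over the generic fiber by Lemma \ref{lem-basechange}'s Stein-factorization construction) cover of a toric pair $(F,B_F)$ is again a toric pair — this is where I would invoke that $T_F(-\log B_F)\cong\mathcal O_F^{\oplus\dim F}$ pulls back to the free sheaf on the cover, hence by the Druel–Lo Bianco / Winkelmann-type characterization the cover is toric (or more elementarily, a connected étale cover of a torus is a torus, and it extends). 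The new boundary $p_X^{-1}(D)$ restricted to a general fiber is the toric boundary of that cover.

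Third — the heart of the matter — I would verify the \emph{splitting} condition: for each irreducible component $\widehat D_i$ of $p_X^{-1}(D)$, the restriction $\widehat D_i|_{\widehat F}$ is irreducible for a general fiber $\widehat F$. This is exactly the content that Lemma \ref{lem-basechange} delivers: that lemma was stated with $D$ a closed subvariety with each component dominating $Y$, and its conclusion is precisely that, after the base change, $\widehat\pi|_{\widehat D_i}\colon\widehat D_i\to\widehat Y$ has irreducible general fibers. The general fiber of $\widehat\pi|_{\widehat D_i}$ is $\widehat D_i\cap\widehat F=\widehat D_i|_{\widehat F}$, so irreducibility of this scheme is the splitting condition. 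The one subtlety is that a priori $\widehat D_i|_{\widehat F}$ could be irreducible as a variety but non-reduced; however, the boundary of a toric pair is reduced, and since $p_X^{-1}(D)$ is a reduced divisor (preimage of a reduced divisor under a finite morphism, and we keep the reduced structure) restricting to a general — hence generically transverse — fiber keeps it reduced, so irreducibility as a set suffices.

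Fourth, I need $\widehat Y$ to be an abelian variety, not just finite over the abelian variety $Y$. Here is where I would use the hypothesis that $f$ is int-amplified: the construction in Lemma \ref{lem-basechange} produces $\widehat g\in\SEnd(\widehat Y)$ commuting with $g$, and $g\colon Y\to Y$ is an endomorphism of an abelian variety; moreover $\widehat f$ is int-amplified on $\widehat X$ (int-amplifiedness is preserved under equivariant generically finite pullback — the eigenvalues of $f^*|_{\mathrm{N}^1}$ are absorbed), which forces $\widehat g$ to be int-amplified on $\widehat Y$ as well via the surjection $\widehat\pi$. An int-amplified (indeed any nontrivial) endomorphism on a variety finite over an abelian variety forces that variety to itself be an abelian variety: a finite cover of an abelian variety admitting an int-amplified endomorphism has $\widetilde q$ equal to its dimension, and by the structure theory (étale-in-codimension-one quotient considerations, or directly: a normal variety finite over an abelian variety with an amplified endomorphism is an abelian variety) we conclude $\widehat Y$ is abelian. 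I expect \textbf{this last step — pinning down that $\widehat Y$ is abelian, and the verification that the cover of the toric fiber is again toric — to be the main obstacle}, since the finite cover $\widehat Y\to Y$ produced by the Stein factorization is not a priori étale over all of $Y$, so one cannot immediately say "étale cover of an abelian variety is abelian"; the int-amplified endomorphism is precisely the tool that rescues this, and I would need to either cite or reprove the relevant rigidity statement for endomorphisms of covers of abelian varieties.
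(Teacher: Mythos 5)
Your overall route is the paper's: apply Lemma \ref{lem-basechange} to the components of $D$, read off the splitting condition from that lemma's conclusion, and use int-amplifiedness to control $\widehat Y$. Two points need repair. First, your second step rests on a misconception: the general fiber of $\widehat\pi$ is \emph{not} a nontrivial \'etale cover of a general fiber of $\pi$. The cover is entirely ``horizontal'': the fiber of $X\times_Y\widehat Y\to\widehat Y$ over $\hat y$ is canonically isomorphic to the fiber of $\pi$ over $p_Y(\hat y)$, so the fibers being toric pairs is automatic and no Druel--Lo Bianco-type characterization of covers of toric pairs is needed. What does require a word is that, since $\pi$ is analytically locally trivial, $X\times_Y\widehat Y$ is already normal and irreducible, so $\widehat X\cong X\times_Y\widehat Y$ and passing to the main component of the normalization does not modify the fibers; this is also how one sees that $\widehat\pi$ remains an analytically locally trivial toric fibration.

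Second, the step you yourself flag as the main obstacle --- that $\widehat Y$ is abelian --- is left unproved, and the unspecified ``rigidity statement'' you invoke is precisely what must be established. The paper's argument is short and concrete: by \cite[Lemmas 3.4 and 3.5]{Men20} the lift $\widehat f$, and hence its descent $\widehat g$, are int-amplified; by \cite[Theorem 1.5]{Men20}, $-K_{\widehat Y}$ is weakly numerically equivalent to an effective $\mathbb{Q}$-divisor; on the other hand the ramification formula gives $K_{\widehat Y}=p_Y^*K_Y+R_{p_Y}=R_{p_Y}\ge 0$ since $K_Y\sim 0$. Hence $K_{\widehat Y}\equiv_w 0$ and $R_{p_Y}=0$, so $p_Y$ is quasi-\'etale, hence \'etale by purity of the branch locus over the smooth base $Y$, and an \'etale cover of an abelian variety is abelian. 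Supplying this (or an equivalent) argument is necessary; without it the lemma is not proved.
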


\begin{proof}
As \(f\) is int-amplified, it follows from \cite[Lemmas 3.5 and 3.4]{Men20} that \(\widehat{f}\) and thus \(\widehat{g}\) are both int-amplified. 
On the one hand, by \cite[Theorem 1.5]{Men20}, \(-K_{\widehat{Y}}\) is weakly numerically equivalent to some effective \(\mathbb{Q}\)-Weil divisor.
Since \(K_Y\sim 0\) and by the ramification divisor formula, \(K_{\widehat{Y}}\) is an effective divisor. 
So \(K_Y\equiv 0\) and thus \(p_Y\) is \'etale by the purity of branch locus.
Therefore, \(\widehat{Y}\), as an \'etale cover over an abelian variety, is also an abelian variety. 

Note that $\pi$ is analytically locally trivial.
Then $X\times_Y \widehat{Y}\to \widehat{Y}$ is analytically locally trivial.
In particular, we do not need to take the main component and the normalization, i.e., $\widehat{X}\cong X\times_Y \widehat{Y}$.
It is then easy to see that \(\widehat{\pi}\colon (\widehat{X}, p_X^{-1}(D))\to \widehat{Y}\) is a \textbf{splitting toric} fibration. 
\end{proof}

The following theorem plays a crucial role in the proof of Theorem \ref{thm-equi-lifting}, particularly in ensuring the reducibility of $\widetilde{D}$ and the $\mathbb{Q}$-factoriality necessary to proceed with the further Minimal Model Program (MMP).

\begin{theorem}\label{thm-toric-trivial}
Let $\pi\colon (X,D)\to Y$ be a splitting toric fibration over a normal variety \(Y\). 
Then \((X,D)\) is a Zariski locally trivial pair over \(Y\).
Moreover, there exists a birational morphism \(\sigma\colon(\widetilde{X},\widetilde{D})\to (X,D)\) such that
\begin{enumerate}
\item \(K_{\widetilde{X}}+\widetilde{D}=\sigma^*(K_X+D)\); and 
\item \(\widetilde{\pi}\colon(\widetilde{X},\widetilde{D})\to Y\) is  a \textbf{splitting smooth} toric fibration over \(Y\), where \(\widetilde{\pi}=\pi\circ\sigma\).  
\end{enumerate}
\end{theorem}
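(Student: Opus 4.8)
The plan is to establish the two parts in sequence: first the Zariski-local triviality, and then use it to build an equivariant (fiberwise) toric resolution which preserves the log-canonical divisor. For Zariski-local triviality, I would start from the fact (Definition \ref{defn-locally-trivial}) that a splitting toric fibration is by hypothesis analytically locally trivial with toric fibers $(F,D|_F)$, and that for each component $D_i$ of $D$ the restriction $D_i|_F$ stays irreducible on a general fiber. The key point is that an analytically locally trivial fibration of a toric pair becomes Zariski-locally trivial once the monodromy is controlled; and the splitting condition is precisely what kills the relevant monodromy, because the permutation action of the fundamental group of a small base neighborhood on the components of $D|_F$ (equivalently, on the rays of the fan of $F$, since for a complete toric pair the fan is recovered from the boundary divisors) must be trivial. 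More carefully: since $(X,D)\to Y$ is analytically locally trivial with fiber the complete toric pair $(F,B_F)$, the automorphisms used to glue lie in $\Aut(F,B_F)$, whose neutral component is the big torus $T$; the component group acts by fan automorphisms, i.e.\ by permuting the rays, hence permuting the components $D_i|_F$. Splitting says each $D_i|_F$ is irreducible, so no component is permuted nontrivially, forcing the gluing cocycle to take values in $T$ up to the identity component. A $T$-torsor over a (small or, after passing to a Zariski cover, Zariski) base is Zariski-locally trivial because $T\cong(\mathbf k^*)^n$ and $\mathrm H^1_{\mathrm{Zar}}(U,\mathbf k^*)=\Pic(U)$ is computed by line bundles, so after further shrinking $Y$ in the Zariski topology we may trivialize. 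Thus $(\pi^{-1}(U),D|_{\pi^{-1}(U)})\cong U\times(F,B_F)$ Zariski-locally on $Y$.

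For the resolution part, I would fix a toric resolution $\rho\colon(\widetilde F,\widetilde B_F)\to(F,B_F)$ of the fiber, chosen to be \emph{canonical} in the toric sense, i.e.\ a sequence of star subdivisions of the fan of $F$ with no new rays forced beyond those dictated by making the cones smooth; crucially, a crepant toric resolution can always be arranged so that $K_{\widetilde F}+\widetilde B_F=\rho^*(K_F+B_F)$, since for the \emph{toric boundary pair} the log discrepancies are all $0$ (every toric divisor, including exceptional ones, appears in $\widetilde B_F$ with coefficient $1$ and $K_F+B_F\sim 0$). Because $(X,D)\to Y$ is Zariski-locally isomorphic to $U\times(F,B_F)$, performing $\mathrm{id}_U\times\rho$ on each chart glues to a global birational morphism $\sigma\colon(\widetilde X,\widetilde D)\to(X,D)$ (the gluing is automatic since $\rho$, being canonical/functorial for the torus action, is compatible with the transition automorphisms valued in $T\rtimes(\text{trivial component group})$). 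By construction $\widetilde\pi=\pi\circ\sigma\colon(\widetilde X,\widetilde D)\to Y$ is Zariski-locally $U\times(\widetilde F,\widetilde B_F)$, hence a splitting smooth toric fibration, giving (2); and the crepancy $K_{\widetilde F}+\widetilde B_F=\rho^*(K_F+B_F)$ propagates chart-by-chart (using $K_{U\times F}+ p_F^*B_F = p_F^*(K_F+B_F)$ relatively, plus $K_U$ pulling back trivially on both sides) to $K_{\widetilde X}+\widetilde D=\sigma^*(K_X+D)$, which is (1).

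The main obstacle I anticipate is the descent of local trivializations from the analytic to the Zariski topology, i.e.\ rigorously passing from ``analytically locally trivial $+$ splitting'' to ``Zariski-locally trivial.'' The subtle points are (a) identifying the structure group of the analytic fibration as an extension of a finite permutation-of-rays group by the torus $T$ (this uses that $\Aut(F,B_F)^{\circ}=T$ for a complete toric pair, a standard but nontrivial input about automorphism groups of toric pairs), (b) showing the splitting hypothesis forces the finite quotient part of every transition function to be trivial — here one must be careful that ``$D_i|_F$ irreducible for a \emph{general} fiber'' upgrades to ``for \emph{every} fiber'' so that the monodromy around every point is controlled, which should follow from flatness/properness of $D_i\to Y$ together with the toric structure, and (c) the step from an analytic $T$-torsor to a Zariski $T$-torsor, which needs either a GAGA-type comparison for $\mathrm H^1(\cdot,\mathcal O^{*})$ on the base (valid since $T$ is special in the sense of Serre, so $T$-torsors for the \'etale, analytic, and Zariski topologies coincide on a variety) or an explicit trivialization after shrinking $Y$. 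Once these are in place, the resolution step is essentially formal, since everything needed — crepancy of the toric boundary resolution and its compatibility with the torus action — is classical toric geometry; I would only need to take minor care that the resolution $\rho$ is chosen functorially enough (e.g.\ via a canonical fan subdivision) that the global gluing of $\mathrm{id}\times\rho$ over the Zariski charts has no cocycle obstruction.
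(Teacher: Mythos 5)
Your strategy for the first (and main) assertion --- reduce the structure group of the analytically locally trivial fibration to the big torus $T$ using the splitting hypothesis, then descend to the Zariski topology because $T$-torsors are Zariski-locally trivial --- is genuinely different from the paper's argument, and it contains a gap exactly at the step you flag as obstacle (c). The reduction of the structure group is fine: $\Aut(F,B_F)\cong T\rtimes\Aut(\Delta)$ for a complete toric pair, a fan automorphism fixing every ray of a complete fan is the identity, and the splitting hypothesis (after adjusting trivializations consistently with the global components $D_i$) kills the permutation part. But what you then have is an \emph{analytic} principal $T$-bundle $X\setminus D\to Y$: the fiberwise $T$-action is defined only chart-by-chart in the analytic topology (it glues because $T$ is abelian), and nothing in your argument shows this action --- equivalently the torsor structure --- is \emph{algebraic}. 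The statement ``$T$ is special, so \'etale/analytic/Zariski torsors coincide'' applies to algebraic torsors and to the comparison of Grothendieck topologies on schemes; it does not by itself convert an analytic $T$-bundle whose total space happens to be a variety into an algebraic $T$-torsor. Since $X\setminus D$ and $Y$ are not proper, no GAGA argument makes the action map $T\times(X\setminus D)\to X\setminus D$ algebraic for free. (A secondary, fixable point: the definition only gives irreducibility of $D_i|_F$ on a \emph{general} fiber, so the monodromy control must be propagated to every fiber, as you note in (b).) This descent is the entire content of the first assertion, so as written the proposal does not close the main step.

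For contrast, the paper avoids torsors altogether: it argues by induction on $\dim X-\dim Y$ (each stratum $(D_i,\sum_{j\ne i}D_j\cap D_i)\to Y$ is again a splitting toric fibration of smaller relative dimension, hence a product after Zariski shrinking), performs toric blowups along the globally defined strata $D_I$ to reduce the fiber to $\mathbb{P}^n$, builds a $\pi$-nef and $\pi$-big combination $A=\sum a_iD_i$ whose nefness is checked by moving curves into the fixed fiber via the induction hypothesis, applies the relative base-point-free theorem to exhibit $X$ birationally as $\mathbb{P}_Y(\tau_*\mathcal{O}_Z(\sigma(D_1)))$ (trivialized after Zariski shrinking), and concludes $X\cong X_y\times Y$ by the rigidity lemma. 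Everything there is algebraic from the start, which is precisely what sidesteps your obstacle (c). Your treatment of the second assertion (fiberwise crepant toric resolution) matches the paper's in substance --- the paper blows up the global strata $D_I$, which makes the gluing you worry about a non-issue and gives the crepancy $K_W+D_W=\sigma_I^*(K_X+D)$ by the standard discrepancy count --- and would be unproblematic once the first assertion is established.
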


\begin{proof}
Let us prove by induction the first half of the statement. 
Write \(D=\sum\limits_{i=1}^mD_i\) into the sum of irreducible components. 
By assumption, \(D|_F\) also has \(m\) irreducible components for any fiber \(F\) of $\pi$. 
Denote by $D_I\coloneqq\bigcap_{i\in I} D_i$ where $I\subseteq \{1,\cdots, m\}$. 
Fix any point \(y\in Y\) and denote by \(X_y\cong F\) the typical fiber.
Since our result is local, we can freely replace \(Y\) by a Zariski open neighborhood of \(y\). 
Note that for each \(i\), the restriction $(D_i, \sum\limits_{j\neq i} D_j\cap D_i)$ is also a splitting toric fibration over \(Y\) (see \cite[Proposition 3.2.7]{CLS11}). 
By induction on $\dim X -\dim Y$ and after shrinking $Y$, we may assume that $D_i\cong (D_i\cap X_y) \times Y$, i.e., \(D_i\) is Zariski locally trivial over \(Y\). 
Note that every toric blowup of the pair $(X_y, D|_{X_y})$ is a blowup along some $D_I\cap F$. 
Let $\textup{Bl}_{D_I}\colon\widetilde{X}\to X$ be the blowup along $D_I$, and $\widetilde{D}=\textup{Bl}_{D_I}^{*}(D)$ the pullback.
Note that $(\widetilde{X}_y, \widetilde{D}|_{\widetilde{X}_y})$ is  still a toric pair. 
Since $D_I\cong \bigcap\limits_{i\in I} (D_i\cap F) \times Y$ by induction, we see that $(\widetilde{X}, \widetilde{D})\cong (\widetilde{X}_y, \widetilde{D}\cap \widetilde{X}_y)\times Y$ if and only if $(X,D)\cong (X_y, D\cap \widetilde{X}_y)\times Y$.
Note also that \((\widetilde{X}, \widetilde{D})\) is a splitting toric fibration over \(Y\). 
Hence, after finitely many steps of replacing $X$ by its blowup along some $D_I$, we may assume there is a birational toric morphism $\sigma_y\colon (X_y, D|_{X_y})\to (\mathbb{P}^n, \sigma_y(D\cap X_y))$. 
Let \(A\coloneqq\sum\limits_{i=1}^ma_iD_i\) be a non-negative linear combination of \(D_i\) such that 
\[A_y\coloneqq A|_{X_y}=\sum\limits_{i=1}^m a_i (D_i\cap X_y)=\sigma_y^*{\sigma_y}_*(\sum_{i=1}^m D_i)\]
is a nef and big Cartier divisor. 
Since $X_y$ is of Fano type, by inversion of adjunction (see \cite[Theorem 5.50]{KM98}), there exists some effective \(\mathbb{Q}\)-Weil divisor $M\coloneqq\sum\limits_{i=1}^m b_i D_i$ such that the pair $(X,M)$ is klt and $-(K_X+M)|_{X_y}$ is ample. 
Since $(A-(K_X+M))|_{X_y}$ is ample on $X_y$, it follows from \cite[Proposition 1.41]{KM98} that $A-(K_X+M)$ is $\pi$-ample after a further shrinking of $Y$. 

We claim that $A$ is $\pi$-nef.
Suppose to the contrary that $A\cdot C<0$ for some curve \(C\) with $\pi(C)$ being a point. 
Then we may assume $C\subseteq D_1$.
Since $D_1=(D_1\cap X_y)\times Y$ by induction, we can find a subvariety \(C\times Y\) inside \(D_1\) and thus  we can find another curve $C'\coloneqq (C\times Y)\cap X_y \subseteq D_1\cap X_y$ such that $C\equiv C'$ in $D_1$. 
But then, $0>A\cdot C=A|_{D_1}\cdot C=A|_{D_1}\cdot C'=A\cdot C'$, a contradiction to the nefness of $A|_{X_y}$. 
So the claim is proved.

By the relative base-point-free theorem \cite[Theorem 3.24]{KM98}, $tA$ is $\pi$-free for $t\gg 1$. 
Then we obtain the following commutative diagram over \(Y\):
$$\xymatrix{
(X,D)\ar[rd]_{\pi}\ar[rr]^{\sigma} && (Z, \sigma(D))\ar[ld]^{\tau}\\
&Y
}$$
where $\sigma\colon X\to Z$ is the Iitaka fibration of $A$ over $Y$.
We may assume that $\sigma(D_1)$ is still a divisor on $Z$. 
By construction, we have $\sigma|_{X_y}=\sigma_y$ and $h^0(Z_y\cong \mathbb{P}^n, \sigma(D_1)|_{Z_y})=n+1$, noting that \(\sigma_y\) is indeed the Iitaka fibration of \(A_y\).
By the generic flatness and the existence of a smooth fiber \(Z_y\), after a further shrinking of \(Y\), 
we may assume that \(\tau\) is smooth,  \(\tau_*\mathcal{O}_Z(\sigma(D_1))\) is invertible and 
 $\tau_*\mathcal{O}_Z(\sigma(D_1))$ is a rank $n+1$ locally free sheaf over $Y$ (see \cite[Chapter III, Corollary 12.9]{Har77}). 
So there is a natural morphism $Z\to \mathbb{P}_{Y}(\tau_*\mathcal{O}_Z(\sigma(D_1)))$ induced by the surjection \(\tau^*\tau_*\mathcal{O}_Z(\sigma(D_1))\to \mathcal{O}_Z(\sigma(D_1))\). 
With \(Y\) further replaced by an open neighborhood of \(y\in Y\), $Z\to \mathbb{P}_{Y}(\tau_*\mathcal{O}_Z(\sigma(D_1)))$ is an isomorphism over $Y$. 
Let $\Delta=\sigma(D)$.
Therefore, after a further shrinking of $Y$, we may assume $(Z, \Delta)\cong (Z_y, \Delta\cap Z_y)\times Y$.

Now we have the following commutative diagram:
$$\xymatrix{
&(\widetilde{Z},\widetilde{\Delta})\ar@{-->}[ld]_{p}\ar[rd]^{q}\\
(X,D)\ar[rd]_{\pi}&& (Z, \Delta)\ar[ld]^{\tau}\ar@{-->}[ll]^{\sigma^{-1}} \\
&Y
}$$
where $q=q_y\times \id_Y$ with $q_y$ being a composite of toric blowups of $(Z_y, \Delta\cap Z_y)$ such that $p|_{\widetilde{Z}_y}\colon (\widetilde{Z}_y, \widetilde{\Delta}\cap \widetilde{Z}_y)\to (X_y, D\cap X_y) $ is a well-defined toric morphism.
Since the indeterminacy locus of $p$ is closed and $\pi$ is projective, we may assume $p$ is well-defined after shrinking $Y$.
Let $C$ be a curve in $\widetilde{Z}_y$.
Since $\widetilde{Z}\cong \widetilde{Z}_y\times Y$, we see that $C\times \{y\}$ is contracted by $p$ if and only if $C\times \{y'\}$ is contracted by $p$ for any $y'\in Y$.
In particular, $X\cong X_y\times Y$ by the rigidity lemma (see~\cite[Lemma 1.15]{Deb01}).
The first half of our lemma is thus proved.

For the second half, we pick a typical fiber \(F\) and we note that the singularities of the pair \((F,D|_F)\) only appears in the form \(D_I|_F\) where \(I\subseteq\{1,\cdots,m\}\). 
Hence, we can resolve the singularities of a single fiber via the global blowups of \(D_I\). 
We claim that for each step, say \(\sigma_I\colon W\coloneqq\textup{Bl}_{D_I}X\to X\) with the exceptional divisor \(E\), the induced pair \((W,D_W\coloneqq\sigma_I^{-1}(D))\) is also a splitting toric fibration over \(Y\).
Indeed, suppose that \(\sharp I=s\), i.e., \(D_I\) is a complete intersection of \(s\) irreducible components of \(D\), which is of codimension \(s\).
Then we have 
\[
K_W+D_W=\sigma_I^*K_X+(s-1)E+\sigma^*D-(s-1)E=\sigma_I^*(K_X+D).
\]
This implies that each irreducible component of \(D_W\) restricting to each fiber is still irreducible. 
Together with the local triviality of \(D_I\to Y\) and \(X\to Y\), this implies that \((W,D_W)\) is still a splitting toric fibration over \(Y\). 
Therefore, after several blowups along \(D_I\) (which is locally trivial over \(Y\)),  
we obtain a birational morphism \(\sigma\colon (\widetilde{X},\widetilde{D})\to (X,D)\) which is a splitting toric fibration over \(Y\) such that \((\pi^{-1}(F),\pi^{-1}(D|_F))\) is a log smooth pair. 
Moreover, \(\sigma^*(K_X+D)=K_{\widetilde{X}}+\widetilde{D}\) by induction and thus our proposition is proved. 
\end{proof}

\section{Proof of Theorem \ref{thm-tir}}\label{sec-pf-thm-tir}
In this section, we prove Theorem \ref{thm-tir}. 
We first prepare the following lemma. 
\begin{lemma}\label{lem-lifting-fg}
    Let $S=\{1,\cdots, n\}$. 
    Let $\sigma, \tau\colon S\to S$ be self-maps.
    Then after replacing $\sigma$ and $\tau$ by a common iteration, there exists $i\in S$ such that 
    \begin{enumerate}
        \item $\sigma(i)=i$,
        \item $\tau(j)=j$ for $j=\tau(i)$, and
        \item $(\sigma\circ (\tau\circ\sigma)^t)(j)=i$ for some $t\ge 0$.
    \end{enumerate}
\end{lemma}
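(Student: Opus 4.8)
The plan is to normalize $\sigma$ and $\tau$ to idempotent maps by passing to a common iterate, and then simply read off $i$ as a periodic point of the composite $\sigma\circ\tau$. For the first step, recall that for any self-map $\phi$ of a finite set the powers $\phi,\phi^2,\phi^3,\dots$ eventually cycle inside the finite monoid of self-maps of $S$, so there is $m_\phi>0$ with $\phi^{m_\phi}$ idempotent, and then $\phi^{km_\phi}=\phi^{m_\phi}$ for every $k\ge1$. Applying this to $\sigma$ and to $\tau$, and letting $M$ be a common multiple of $m_\sigma$ and $m_\tau$, the pair $(\sigma^M,\tau^M)$ consists of two idempotent self-maps. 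So, after replacing $(\sigma,\tau)$ by this common iterate, I may assume $\sigma^2=\sigma$ and $\tau^2=\tau$; in particular each of $\sigma,\tau$ restricts to the identity on its own image.

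The key computation is the elementary identity $(\tau\circ\sigma)^t\circ\tau=\tau\circ(\sigma\circ\tau)^t$, valid for all $t\ge0$ by an immediate induction on $t$. Using it, for $j=\tau(i)$ one gets
\[
\bigl(\sigma\circ(\tau\circ\sigma)^t\bigr)(j)=\bigl(\sigma\circ(\tau\circ\sigma)^t\circ\tau\bigr)(i)=(\sigma\circ\tau)^{t+1}(i),
\]
so condition (3) asks precisely that $i$ be a periodic point of the self-map $g\coloneqq\sigma\circ\tau$. Since $g$ is a self-map of a finite set it certainly has a periodic point (its eventual image is nonempty and $g$ permutes it), so I take $i$ to be such a point, say $g^{s}(i)=i$ with $s\ge1$, and set $j=\tau(i)$ and $t=s-1\ge0$.

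It then remains only to verify the three conditions. Condition (3) is the display above. For condition (2): $j=\tau(i)$ lies in the image of $\tau$, hence $\tau(j)=j$ because $\tau$ is idempotent. For condition (1): from $i=g^{s}(i)=\sigma\bigl(\tau(g^{s-1}(i))\bigr)$ we see $i$ lies in the image of $\sigma$, hence $\sigma(i)=i$ because $\sigma$ is idempotent. This finishes the proof. There is no genuine obstacle here; the only points needing a little care are checking that passing to the common power $M$ really does leave both maps idempotent (this is exactly $\phi^{km_\phi}=\phi^{m_\phi}$), and the bookkeeping identity $(\tau\circ\sigma)^t\circ\tau=\tau\circ(\sigma\circ\tau)^t$ that collapses the alternating composition in (3) into a single power of $\sigma\circ\tau$.
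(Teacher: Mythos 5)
Your proof is correct and follows essentially the same route as the paper: reduce to idempotent $\sigma,\tau$ by a common iterate, then locate a recurrent point of the alternating composition using finiteness of $S$. The only difference is presentational — the paper tracks the alternating orbit $a_1,a_2,\dots$ explicitly and applies pigeonhole, whereas you collapse the alternation via the identity $(\tau\circ\sigma)^t\circ\tau=\tau\circ(\sigma\circ\tau)^t$ and pick a periodic point of $\sigma\circ\tau$ directly, which is a slightly cleaner bookkeeping of the same idea.
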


\begin{proof}
We first claim that there exists $s>0$ such that $\sigma^{2s}=\sigma^s$ and $\tau^{2s}=\tau^s$. 
It is sufficient for us to only consider \(\sigma\). 
For any \(1\leq i\leq n\), as \(S\) is a finite set, there exist positive integers \(u_i, v_i\) such that \(\sigma^{u_i+v_i}(i)=\sigma^{u_i}(i)\). 
Let \(s=\prod_{i=1}^n u_iv_i\). 
Then it follows that \(\sigma^{s+v_i}(i)=\sigma^s(i)\) and hence \(\sigma^{2s}(i)=\sigma^s(i)\) for any \(i\).
Our claim is proved.
So after a common iteration, we may assume $\sigma^2=\sigma$ and $\tau^2=\tau$. 
   Consider the sequence $a_m$ where $a_1=1$, $a_m=\sigma(a_{m-1})$ if $m$ is even, and $a_m=\tau(a_{m-1})$ if $m$ is odd.
   We have $\sigma(a_m)=a_m$ (resp.~$\tau(a_m)=a_m$) if $m$ is even (resp.~odd).
   Note that $S$ is finite.
   Then for some even numbers $m, k\ge 2$, we have $a_m=a_{m+k}$.
   Let $i=a_m$, $j=\tau(i)=a_{m+1}$, and $t=k/2-1$.
   Note that $(\sigma\circ (\tau\circ\sigma)^t)(j)=i$.
   So the lemma is proved.
\end{proof}

The following lemma is well-known (cf.~\cite[Lemma 3.4]{DLB22}), noting that, up to conjugacy, there are only finitely many subgroups of the topological fundamental group \(\pi_1(X_{\textup{reg}})\) of a given index (see also \cite[arXiv version, Proposition 3.13]{GKP16}).

\begin{lemma}\label{lem-finite-index}
Let \(X\) be a normal variety and let \(d\) be a positive integer.
Then there are only finitely many quasi-\'etale covers of \(X\) of degree \(d\) up to isomorphisms over \(X\).
\end{lemma}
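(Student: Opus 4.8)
The statement is that a normal variety $X$ admits only finitely many quasi-\'etale covers of any fixed degree $d$, up to isomorphism over $X$. The plan is to reduce this to a purely group-theoretic finiteness fact about the topological fundamental group of the smooth locus, via the transcendental principle already established in Section \ref{sec-trans}. First I would pass to the setting of $\mathbb{C}$: using the Lefschetz principle exactly as in Section \ref{sec-trans}, one may assume $\mathbf{k}\subseteq\mathbb{C}$, and spreading out over a finitely generated base $V$ shows that a bound on the number of degree-$d$ quasi-\'etale covers of $X_\mathbb{C}$ yields the same bound for $X$ (since the covers of $X$ base-change to covers of $X_\mathbb{C}$, and two non-isomorphic covers over $X$ stay non-isomorphic over $X_\mathbb{C}$ by faithfully flat descent). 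So it suffices to treat $X$ a normal complex projective variety.

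Next, I would use the standard dictionary between quasi-\'etale covers and finite covers of the smooth locus. A degree-$d$ quasi-\'etale cover $Y\to X$ is \'etale over $X_{\textup{reg}}$ away from a codimension-$\geq 2$ set; by purity of the branch locus (Zariski--Nagata) it is in fact \'etale over all of $X_{\textup{reg}}$, and $Y$ is recovered as the normalization of $X$ in the function field of $Y$. Thus degree-$d$ quasi-\'etale covers of $X$ correspond bijectively to degree-$d$ connected \'etale covers of $X_{\textup{reg}}$ up to isomorphism, which by covering space theory correspond to conjugacy classes of index-$d$ subgroups of $\pi_1(X_{\textup{reg}})$. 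Then I would invoke the classical fact (the footnote already cites \cite[arXiv version, Proposition 3.13]{GKP16}; cf.~\cite[Lemma 3.4]{DLB22}) that a topological space with finitely generated fundamental group — and $\pi_1(X_{\textup{reg}})$ is finitely generated since $X_{\textup{reg}}$ is a complex quasi-projective variety, hence homotopy equivalent to a finite CW complex — has only finitely many subgroups of any given finite index. (For a finitely generated group $G$, an index-$d$ subgroup is the stabilizer of a point under the action of $G$ on the coset space, i.e., determined by a homomorphism $G\to S_d$, and there are only finitely many such homomorphisms since $G$ is finitely generated and $S_d$ is finite.)

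The potential subtlety — and the one point I would be most careful about — is not really a deep obstacle but a bookkeeping matter: the correspondence between \emph{isomorphism classes of covers over $X$} and \emph{conjugacy classes of finite-index subgroups} must be set up so that the count on the group side dominates the count on the geometric side, and one must make sure connectedness (irreducibility of $Y$) is correctly matched with the subgroup being a subgroup of $\pi_1$ rather than of a quotient. Disconnected covers are handled by the same argument applied componentwise, since a degree-$d$ cover has at most $d$ components each of degree $\leq d$. Once these identifications are in place the finiteness is immediate from the group theory, and the descent to $\mathbf{k}$ from the first paragraph finishes the proof.
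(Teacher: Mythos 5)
Your proposal is correct and follows essentially the same route as the paper, which states the lemma as well known and justifies it precisely by the correspondence with \'etale covers of \(X_{\reg}\) (via purity of the branch locus) and the finiteness, up to conjugacy, of index-\(d\) subgroups of the finitely generated group \(\pi_1(X_{\reg})\), citing \cite[Lemma 3.4]{DLB22} and \cite[arXiv version, Proposition 3.13]{GKP16}. Your write-up merely adds the (harmless) Lefschetz-principle reduction and the componentwise treatment of disconnected covers; note only that the lemma is stated for normal varieties, not necessarily projective, which your argument handles anyway since quasi-projectivity suffices for finite generation of \(\pi_1(X_{\reg})\).
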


\begin{proof}[Proof of Theorem \ref{thm-tir}]
Let \(\mathcal{I}\in \SEnd(X,D)\) be the int-amplified endomorphism.
By the ramification divisor formula, the assumption \(K_X+D\equiv 0\) implies that 
$$K_X+D=\mathcal{I}^*(K_X+D)$$
and hence $\mathcal{I}$ is quasi-\'etale away from $D$.
So we further have 
$$\mathcal{I}^{[*]}\Omega_{X}^{[1]}(\textup{log}\,D)=\Omega_{X}^{[1]}(\textup{log}\,D)$$
by Lemma \ref{lem-diff-finite}. 
By Theorem \ref{thm-invariant-sheaf}, there exists a quasi-\'etale cover \(\pi\colon \widehat{X}\to X\) such that  \(\Omega_{\widehat{X}}^{[1]}(\textup{log}\,\pi^*D)=\pi^{[*]}\Omega_{X}^{[1]}(\textup{log}\,D)\) 
is numerically flat and locally free.
By \cite[Theorem 1.4]{BH14}, the pair \((X,D)\) has only lc singularities.
Then \(\widetilde{q}(X)<\infty\) by Theorem \ref{thm-struc-k}.
So we may assume \(q(\widehat{X})=\widetilde{q}(X)\) by taking a sufficient quasi-\'etale cover.

Consider the following sets of (irreducible) finite covers:
$$\mathcal{S}_d=
\left\{(\varphi, V)\,\left|
\begin{array}{ll}  
\varphi\colon V\to X \textup{ is quasi-\'etale with } \deg(\varphi)=d, \\  
\Omega_{V}^{[1]}(\textup{log}\,\varphi^*D) \textup{ is numerically flat and locally free, and} \\  
q(V)=\widetilde{q}(X).
\end{array}  
  \right.\right\}\Bigg/\cong
$$
where $(\varphi_1, V_1)\cong (\varphi_2, V_2)$ if $\varphi_1=\varphi_2\circ\psi$ for some automorphism $\psi\colon V_1\to V_2$.
By Lemma \ref{lem-finite-index}, \(\mathcal{S}_d\) is finite. 
Note that $(\pi,\widehat{X})\in \mathcal{S}_{\deg (\pi)}$.
So we can find a minimal positive integer \(m\) such that \(\mathcal{S}_m\neq\emptyset\). 
Write 
$$\mathcal{S}_m=\{(\varphi_1,V_1),\cdots,(\varphi_n,V_n)\}.$$

\begin{claim}\label{claim-lift}
There exists \((\varphi_i, V_i)\in\mathcal{S}_m\) such that
\(f^s\) lifts to an element in \(\SEnd(V_i,\varphi_i^*D)\) for some $s>0$ and 
\(\SEnd(V_i,\varphi_i^*D)\) contains an int-amplified endomorphism.
\end{claim}

\begin{proof}
Let $W$ be the normalization of the following fiber product
\[
\xymatrix{
W\ar[r]\ar[d]_{\varphi}\ar[r]^{p}&V_i\ar[d]^{\varphi_i}\\
X\ar[r]_{f}&X
}
\]
where $\varphi$ is quasi-\'etale and $p$ is quasi-\'etale away from $\varphi_i^{-1}(D)$.
Let $W_0$ be an irreducible component of $W$.
Note that $\varphi|_{W_0}$ is quasi-\'etale.
So $\widetilde{q}(X)\ge q(W_0)\ge q(V_i)=\widetilde{q}(X)$ implies that $q(W_0)=\widetilde{q}(X)$.
By Lemma \ref{lem-diff-finite}, 
$$\Omega_{W_0}^{[1]}(\textup{log}\,(\varphi|_{W_0})^*D)=\Omega_{W_0}^{[1]}(\textup{log}\,(p|_{W_0})^{-1}(\varphi_i^{-1}(D)))$$ is  also numerically flat and locally free. 
Note that $\deg \varphi|_{W_0}\le \deg \varphi_i$.
By the minimality of $m$, we have that $W=W_0$ is irreducible and $(\varphi, W)\in \mathcal{S}_m$.
Consequently, \(f\) and \(\mathcal{I}\) induces, via (normalization) of the base change, self-maps \(\sigma\) (resp. \(\tau\)) on the finite set \(\mathcal{S}_m\).

We use the index $i$ to represent $(\varphi_i, V_i)$.
By Lemma \ref{lem-lifting-fg}, there exist some $i$ and some $s, t>0$ such that $\sigma^s(i)=i$, $\tau^s(j)=j$ for $j=\tau^s(i)$, and $(\sigma^s\circ(\tau^s\circ \sigma^s)^t)(j)=i$.
Then $f^s$ lifts to some element in $\SEnd(V_i, \varphi_i^*D)$.
Let $g=f^s\circ (\mathcal{I}^s\circ f^s)^t$ and $\mathcal{I}'=g\circ \mathcal{I}^k\in \SEnd(X,D)$ with $k\gg 1$.
By \cite[Theorem 1.4]{Men20}, $\mathcal{I}'$ is int-amplified.
Note that $\mathcal{I}'$ lifts to an element in $\SEnd(V_i, \varphi_i^*D)$ which is int-amplified by \cite[Lemma 3.5]{Men20}.
\end{proof}

We take $(\pi, \widehat{X})=(\varphi_i, V_i)$ as in the above claim.
Let $\widehat{f}\in \SEnd(\widehat{X}, \pi^*D)$ be the lifting of $f^s$ and $\widehat{\mathcal{I}}\in \SEnd(\widehat{X}, \pi^*D)$ the int-amplified endomorphism.
Since $\pi$ is quasi-\'etale, $\widehat{X}$ is klt and $(\widehat{X}, \pi^*D)$ is log canonical.
By Theorem \ref{thm-struc-k}, the Albanese morphism 
$$\alb_{\widehat{X}}\colon (\widehat{X},\pi^*D)\to A$$ 
is a toric fibration.

We are done by further taking a quasi-\'etale cover as in Lemma \ref{lem-toric-splitting-bc}. 
\end{proof}

At the end of this section, we propose the following question, which extends \cite[Question 1.2]{MZg23}. 
In view of Theorem \ref{thm-tir}, the question has a positive answer when \(f \colon X \to X\) has totally invariant ramifications. Moreover, Yoshikawa proves in \cite[Theorem 1.3]{Yos21} that, up to replacing \(X\) by an \(f\)-equivariant quasi-\'etale cover, the general fiber of the Albanese morphism is of Fano type, which partially answers this question.

\begin{question}\label{ques}
    Let \(X\) be a $\mathbb{Q}$-factorial klt projective variety admitting an int-amplified endomorphism \(f\).
    Then up to replacing \(X\) by a quasi-\'etale cover, the Albanese map is a toric fibration onto an abelian variety.
\end{question}

\section{Equivariant modification of toric fibration, Proof of Theorem \ref{thm-equi-lifting}}\label{sec-pf-thm-equi} 
The whole section is devoted to the equivariancy of the toric fibration and its modification with the main result Theorem \ref{thm-equi-lifting-general}.
We recall some basic facts on toric morphisms. 
\begin{definition}
Let $\Delta$ be a fan in a lattice $\mathbf{N}_{\mathbb{R}}$. 
Denote by $T_{\mathbf{N}}(\Delta)$ the induced toric variety with the big torus $T_{\mathbf{N}}$ and $e_{T_{\mathbf{N}}}$ the identity element.
Note that the support $|\Delta|$ coincides with $\mathbf{N}_{\mathbb{R}}$ when $T_{\mathbf{N}}(\Delta)$ is projective. 
\begin{enumerate}
\item A morphism $f\colon T_{\mathbf{N}_1}(\Delta_1)\to T_{\mathbf{N}_2}(\Delta_2)$ is said to be {\it toric} if it comes from a lattice homomorphism that is compatible with fans, i.e., $f(T_{\mathbf{N}_1})\subseteq T_{\mathbf{N}_2}$ and $f|_{T_{\mathbf{N}_1}}$ is a group homomorphism.  
\item Any toric morphism is uniquely determined by some group homomorphism $\phi_f\colon \mathbf{N}_1\to \mathbf{N}_2$ which is compatible with the fans, i.e., for any cone $\sigma_1\in \Delta_1$, there exists a cone $\sigma_2\in \Delta_2$ such that $\phi_{f,\mathbb{R}}(\sigma_1)\subseteq \sigma_2$. 
\item In general, if $f(T_{\mathbf{N}_1})\subseteq T_{\mathbf{N}_2}$, i.e., \(f\) sends the big torus of \(T_{\mathbf{N}_1}(\Delta_1)\) to that of \(T_{\mathbf{N}_2}(\Delta_2)\), then $f=\alpha\cdot g$ where $g$ is a toric morphism and $\alpha=f(e_{T_{\mathbf{N}_1}})\in T_{\mathbf{N}_2}$ is a multiplication (see \cite[Section 3.3]{CLS11}, \cite[Section 2]{Nak21}). 
\end{enumerate}
\end{definition}

\begin{theorem}\label{thm-equi-lifting-general}
Let \(\pi\colon (X,D)\to Y\) be a toric  fibration over a normal projective variety \(Y\). 
Then there is a generically finite surjective morphism \(\sigma\colon (\widetilde{X},\widetilde{D}\coloneqq\sigma^{-1}(D))\to (X,D)\) such that the following hold.
\begin{enumerate}
\item Let \(\widetilde{\pi}\colon (\widetilde{X},\widetilde{D})\to \widetilde{Y}\) be the Stein factorization of the composite map \((\widetilde{X},\widetilde{D})\to (X,D)\to Y\).
Then \(\widetilde{\pi}\colon (\widetilde{X},\widetilde{D})\to \widetilde{Y}\) is a splitting smooth toric  fibration.
\item Let $f\in \SEnd(X,D)$ such that $\pi$ is $f$-equivariant and \(f^*D=qD\) for some positive integer $q$.
Then after iteration, $f$ lifts to $\widetilde{f}\in \SEnd(\widetilde{X},\widetilde{D})$. 
\end{enumerate}
\end{theorem}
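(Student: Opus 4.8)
Here is how I would prove Theorem~\ref{thm-equi-lifting-general}.

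\smallskip
\noindent\textbf{Overview.} The plan is: (i) perform an $f$-equivariant base change that turns $\pi$ into a \emph{splitting} toric fibration; (ii) feed the result into Theorem~\ref{thm-toric-trivial} to produce the smooth toric model $\sigma'\colon(\widetilde X,\widetilde D)\to(X',D')$; (iii) analyze how $f$ acts on the toric boundary and show that, fibrewise, its ``toric part'' is nothing but the $q$-power map; (iv) lift $f$ through the two constructions, using that the $q$-power map and torus translations survive \emph{any} toric modification. The resolution $\sigma$ will depend only on $(X,D,\pi)$, not on $f$, which is what makes the statement uniform in $f$.

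\smallskip
\noindent\textbf{Base change and model; part (1).} Since $\pi$ is analytically locally trivial, every irreducible component of $D$ dominates $Y$, so Lemma~\ref{lem-basechange} applies. It yields a finite cover $p_Y\colon Y'\to Y$ depending only on $(X,D,\pi)$, and (because analytic local triviality is preserved under base change, as in the proof of Lemma~\ref{lem-toric-splitting-bc}, so no passage to a main component or normalization is needed) $X'=X\times_Y Y'$ together with a \emph{splitting} toric fibration $\pi'\colon(X',D')\to Y'$, where $p_X\colon X'\to X$ and $D'=p_X^{-1}(D)$. I would then apply Theorem~\ref{thm-toric-trivial} to $\pi'$, obtaining a crepant birational morphism $\sigma'\colon(\widetilde X,\widetilde D)\to(X',D')$ with $\widetilde D=\sigma'^{-1}(D')$ such that $\widetilde\pi':=\pi'\circ\sigma'\colon(\widetilde X,\widetilde D)\to Y'$ is a splitting smooth toric fibration. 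Set $\sigma:=p_X\circ\sigma'$, which is generically finite surjective with $\sigma^{-1}(D)=\widetilde D$. As $\widetilde\pi'$ has connected fibers and $p_Y$ is finite, the composite $\pi\circ\sigma=p_Y\circ\widetilde\pi'$ is already a Stein factorization, so $\widetilde Y=Y'$ and part~(1) is exactly Theorem~\ref{thm-toric-trivial}(2).

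\smallskip
\noindent\textbf{The toric part of $f$.} Given $f$ with induced $g\in\SEnd(Y)$, the ``moreover'' part of Lemma~\ref{lem-basechange} produces $\widehat f\in\SEnd(X',D')$ with $p_X\circ\widehat f=f\circ p_X$, whence $\widehat f^{-1}(D')=D'$ and $\widehat f^*D'=qD'$. Being a finite surjective endomorphism, $\widehat f$ permutes the irreducible components of $D'$, so after replacing it by an iterate $\widehat f^{\,s}$ I may assume it fixes each component $D'_i$; then $\sum_i\widehat f^*D'_i=qD'=q\sum_iD'_i$ forces $\widehat f^*D'_i=qD'_i$ for every $i$. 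Over a general point of $Y'$, $\widehat f$ restricts, in trivializing charts, to an endomorphism of the fibre toric pair $(F,B_F)$ preserving every boundary divisor, composed with a varying torus translation; its fan map $L\colon N\to N$ therefore fixes every ray $\rho_i$, and $\widehat f^*D'_i=qD'_i$ gives $L(v_{\rho_i})=q\,v_{\rho_i}$. Since $F$ is projective the $v_{\rho_i}$ span $N_{\mathbb R}$, so $L=q\cdot\id_N$: fibrewise $\widehat f$ is a torus translation composed with the $q$-power map of $F$.

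\smallskip
\noindent\textbf{Lifting, and the main obstacle.} By construction Theorem~\ref{thm-toric-trivial} only blows up strata $D'_I$ that are locally trivial over $Y'$, so over a trivializing open $U\subseteq Y'$ the morphism $\sigma'$ is of the form $\rho\times\id$ for a fixed toric resolution $\rho\colon\widetilde F\to F$. The $q$-power map has fan map $q\cdot\id$, which is compatible with every fan (a cone is stable under $v\mapsto qv$), hence lifts to the $q$-power map of $\widetilde F$; torus translations lift because $T_F=T_{\widetilde F}$ acts on $\widetilde F$. Thus $\widehat f^{\,s}$ lifts over each trivializing chart, and since $\sigma'$ is birational such a lift is unique, so the local lifts glue to $\widetilde f\in\SEnd(\widetilde X,\widetilde D)$ with $\sigma'\circ\widetilde f=\widehat f^{\,s}\circ\sigma'$; then $\sigma\circ\widetilde f=f^{\,s}\circ\sigma$ and $\widetilde f^{-1}(\widetilde D)=\sigma'^{-1}((\widehat f^{\,s})^{-1}(D'))=\widetilde D$, proving~(2). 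The hard part is the identification $L=q\cdot\id_N$ in the previous paragraph: it is precisely where the integrality of $q$ and the fact that $D$ is the \emph{entire} toric boundary get used, and it is what collapses the potentially intricate combinatorics of toric self-maps into the single, subdivision-independent map $v\mapsto qv$. Without it, a given toric resolution need not be stable under all relevant $f$ even after iteration, and the uniform choice of $\sigma$ would fail.
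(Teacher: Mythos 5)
Your proposal is correct and follows essentially the same route as the paper: an equivariant base change (Lemma \ref{lem-basechange}) to reach a splitting toric fibration, Theorem \ref{thm-toric-trivial} for the splitting smooth model, the fibrewise identification of the toric part of $f$ as a translation composed with the $q$-power map (the paper's Lemma \ref{lem-toric-power}, using exactly your argument that the rays are fixed, scaled by $q$, and span $N_{\mathbb R}$ by projectivity), and the extension of such maps across any toric modification (the paper's Lemmas \ref{lem-toric-ext1} and \ref{lem-toric-ext2}). No gaps.
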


Before proving Theorem \ref{thm-equi-lifting-general}, we make some preparations. We begin with the following lemma, which is well-known to experts.

\begin{lemma}\label{lem-toric-ext1}
Let $X=T_{\mathbf{N}}(\Delta)$ be a toric variety of dimension $n$.
Let $f\colon T_{\mathbf{N}}\to T_{\mathbf{N}}$ be an endomorphism via $f(t_1,\cdots, t_n)=(t_1^q,\cdots, t_n^q)$.
Then $f$ extends to a toric endomorphism of $X$.
\end{lemma}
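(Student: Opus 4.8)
The statement is really just the observation that the $q$-power map on the torus is a group endomorphism induced by multiplication by $q$ on the lattice, and that scaling a lattice by a positive integer is automatically compatible with every fan. So the plan is: first I would identify $f$ with the lattice homomorphism $\phi_f = q\cdot \mathrm{id}_{\mathbf N}\colon \mathbf N\to\mathbf N$. Concretely, writing the big torus as $T_{\mathbf N} = \mathbf N\otimes_{\mathbb Z}\mathbf k^*$, the map $(t_1,\dots,t_n)\mapsto (t_1^q,\dots,t_n^q)$ is exactly the functor $(-)\otimes\mathbf k^*$ applied to $q\cdot\mathrm{id}_{\mathbf N}$; equivalently, on the character lattice $\mathbf M = \mathrm{Hom}(\mathbf N,\mathbb Z)$ it is $m\mapsto qm$, which sends the monomial $\chi^m$ to $\chi^{qm} = (\chi^m)^q$, matching the description of $f$ on $T_{\mathbf N}$.

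Next I would check the compatibility condition that promotes a torus homomorphism to a morphism of toric varieties. By the criterion recalled just before the lemma (a group homomorphism $\phi\colon\mathbf N_1\to\mathbf N_2$ induces a toric morphism $T_{\mathbf N_1}(\Delta_1)\to T_{\mathbf N_2}(\Delta_2)$ as soon as, for every cone $\sigma_1\in\Delta_1$, there is $\sigma_2\in\Delta_2$ with $\phi_{\mathbb R}(\sigma_1)\subseteq\sigma_2$), it suffices to observe that for $\phi = q\cdot\mathrm{id}$ and $q>0$ we have $\phi_{\mathbb R}(\sigma) = q\sigma = \sigma$ for any cone $\sigma$, since a cone is stable under multiplication by a positive scalar. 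Thus we may take $\sigma_2 = \sigma_1 = \sigma$ in $\Delta = \Delta_1 = \Delta_2$, and the compatibility is trivially satisfied. Therefore $\phi_f$ defines a toric morphism $\widetilde f\colon X = T_{\mathbf N}(\Delta)\to T_{\mathbf N}(\Delta)$, and by construction its restriction to $T_{\mathbf N}$ is the group endomorphism $t\mapsto (t_1^q,\dots,t_n^q) = f$. Hence $\widetilde f$ is the desired toric extension; it is surjective because $\phi_f$ has finite cokernel (it is injective with image of index $q^n$), so $\widetilde f$ is finite and dominant, hence surjective onto the projective $X$. (If one does not wish to assume $X$ projective, one can invoke that a toric morphism induced by an injective lattice map of finite cokernel is finite surjective onto the image toric variety, which here is all of $X$ since $\Delta$ is unchanged.)

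I do not expect a genuine obstacle here; the only points requiring a little care are (i) spelling out the identification of the stated coordinate map with multiplication-by-$q$ on the lattice — which is just the standard dictionary between subtori/quotient tori and sublattices — and (ii) making sure the fan really is carried into itself, which is where positivity of $q$ is used (for $q<0$ one would need $\Delta$ to be centrally symmetric, and for $q=0$ the map degenerates). Both are routine, so the proof is essentially a bookkeeping argument with the combinatorial dictionary of toric geometry.
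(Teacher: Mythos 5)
Your proposal is correct and matches the paper's proof, which likewise identifies $f$ with the lattice endomorphism $q\cdot\id_{\mathbf{N}}$ and notes that this is compatible with any fan since every cone is stable under positive scaling. Your write-up simply spells out the same one-line argument in more detail.
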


\begin{proof}
This is simply because the group homomorphism 
\[
\phi=f_*=q\id_{\mathbf{N}}\colon\mathbf{N}\cong H_1(T_{\mathbf{N}},\mathbb{Z})\to  \mathbf{N}
\]
is always compatible with any fan $\Delta$. 
\end{proof}

\begin{lemma}\label{lem-toric-ext2}
Let $X$ be a normal toric variety with $T$ the big torus of dimension $n$.
Let $g\colon Y\to Z$ be a morphism of varieties.
Let $f\colon T\times Y\to T\times Z$ be a morphism via $f(t_1,\cdots, t_n, y)=(\alpha_1(y)\cdot t_1^q,\cdots, \alpha_n(y)\cdot t_n^q, g(y))$ where $\alpha_i\colon Y\to \mathbf{k}^{\times}$ are morphisms.
Then $f$ extends to a morphism $X\times Y\to X\times Z$.
\end{lemma}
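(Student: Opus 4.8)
The plan is to reduce the statement to the already-established Lemma \ref{lem-toric-ext1} by exploiting the toric structure on $X$ locally on the base. First I would recall that a morphism of schemes $X\times Y\to X\times Z$ extending a given map on $T\times Y$ is unique if it exists (since $T\times Y$ is dense and $X\times Z$ is separated), so the construction is local on the source; in particular it suffices to extend $f$ over each affine toric chart $U_\sigma\subseteq X$ corresponding to a cone $\sigma\in\Delta$, and then glue. Fix such a cone $\sigma$ with dual cone $\sigma^\vee\subseteq M=\mathbf{N}^\vee$, so $U_\sigma=\operatorname{Spec}\mathbf{k}[S_\sigma]$ where $S_\sigma=\sigma^\vee\cap M$ is the semigroup of characters regular on $U_\sigma$. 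The coordinates $t_1,\dots,t_n$ are the characters dual to a basis of $\mathbf{N}$, and a general character $\chi^u$ with $u=(u_1,\dots,u_n)\in M$ is the monomial $t_1^{u_1}\cdots t_n^{u_n}$.

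The key computation is then purely formal: under $f$, the function $\chi^u$ on $T\times Z$ pulls back to
\[
f^*(\chi^u)=\prod_{i=1}^n\bigl(\alpha_i(y)\,t_i^{q}\bigr)^{u_i}=\Bigl(\prod_{i=1}^n\alpha_i(y)^{u_i}\Bigr)\cdot\chi^{qu}
\]
as a function on $T\times Y$. Now I would argue that this expression extends regularly to $U_\sigma\times Y$: the monomial $\chi^{qu}=(\chi^u)^q$ lies in $\mathbf{k}[S_\sigma]$ whenever $u\in S_\sigma$, because $S_\sigma$ is closed under addition and hence $qu\in S_\sigma$ — this is exactly the numerical fact underlying Lemma \ref{lem-toric-ext1} (the homothety $q\cdot\mathrm{id}_{\mathbf N}$ is compatible with every fan). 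The scalar factor $\prod_i\alpha_i(y)^{u_i}$ is, for $u\in S_\sigma$ with \emph{nonnegative} entries, a genuine regular function on $Y$ since each $\alpha_i\colon Y\to\mathbf{k}^{\times}$ is a morphism; for the general $u\in S_\sigma$ (which may have negative coordinates) one uses that $\alpha_i$ is invertible, so $\alpha_i^{u_i}$ is still regular on all of $Y$, and the product is a regular function on $Y$ pulled back along the projection. Combining, $f^*(\chi^u)$ extends to a regular function on $U_\sigma\times Y$ for every generator $\chi^u$ of $\mathbf{k}[S_\sigma]$, which furnishes a ring homomorphism $\mathbf{k}[S_\sigma]\otimes_{\mathbf k}\mathcal{O}_Z(V)\to\mathcal{O}_{U_\sigma\times Y}$ for affine opens $V\subseteq Z$ meeting $g(Y)$ appropriately; equivalently, $f$ extends to a morphism $U_\sigma\times Y\to U_\sigma\times Z$ commuting with $g$ on the second factor.

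Finally I would note that these chart-wise extensions agree on overlaps $U_\sigma\cap U_{\sigma'}\times Y$ by the uniqueness remark above (they all restrict to $f$ on the dense open $T\times Y$), so they glue to a morphism $X\times Y\to X\times Z$; tracking the second coordinate shows it still covers $g$. The only point requiring a little care — and the closest thing to an obstacle — is bookkeeping the base direction: one must check that the scalars $\alpha_i(y)^{u_i}$, together with the map $g$ on the $Z$-factor, assemble into an honest morphism to $X\times Z$ rather than merely a family of maps $X\times\{y\}\to X\times\{g(y)\}$; this is handled by writing everything as an explicit $\mathbf{k}$-algebra homomorphism on affine charts as above, where the regularity in $y$ is manifest because each $\alpha_i$ and $g$ is a morphism. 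No deeper input is needed, and in particular the hypothesis that the $\alpha_i$ land in $\mathbf{k}^{\times}$ (not merely $\mathbf{k}$) is used precisely to handle characters $u\in S_\sigma$ with negative coordinates.
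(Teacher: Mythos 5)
Your proof is correct and is in substance the same as the paper's: the paper invokes Lemma \ref{lem-toric-ext1} to extend the $q$-th power map to a toric endomorphism $h\colon X\to X$ and then defines the extension globally as $(x,y)\mapsto(\alpha(y)\cdot h(x),g(y))$, using the torus action $T\times X\to X$ and the morphism $\alpha=(\alpha_1,\dots,\alpha_n)\colon Y\to T$. Your chart-by-chart computation of $f^*(\chi^u)$ is the explicit coordinate version of that one-line construction, resting on the same two inputs (fan-compatibility of $q\cdot\mathrm{id}_{\mathbf{N}}$, i.e.\ Lemma \ref{lem-toric-ext1}, and the fact that the $\alpha_i$ take values in $\mathbf{k}^{\times}$).
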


\begin{proof}
By Lemma \ref{lem-toric-ext1}, there is a surjective endomorphism $h\colon X\to X$ such that $$h(t_1,\cdots, t_n)=(t_1^q,\cdots, t_n^q)$$ for any $(t_1,\cdots, t_n)\in T$.
Let $\alpha\colon  Y\to T$ be a morphism such that $\alpha(y)=(\alpha_1(y),\cdots, \alpha_n(y))$.
Define $\overline{f}(x,y)=(\alpha(y)\cdot h(x),g(y))$ for any $(x,y)\in X\times Y$.
Then $\overline{f}$ extends $f$.
\end{proof}

\begin{lemma}\label{lem-toric-power}
Let $f\colon X\to X$ be a surjective endomorphism of a normal projective toric pair $(X,D)$ such that $f^*D_i=qD_i$ for each irreducible component of $D$ and $q\ge 1$.
Let $T$ be the big torus of dimension $n$.
Then $f=\alpha\cdot g$ with $\alpha= f(e_T)$ and $g(t_1,\cdots, t_n)=(t_1^q,\cdots, t_n^q)$ for any $(t_1,\cdots, t_n)\in T$.
\end{lemma}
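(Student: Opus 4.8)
The plan is to restrict $f$ to the big torus $T$, use the general structure of morphisms preserving the torus to write $f$ as a translation composed with a genuine toric endomorphism, and then pin down the associated lattice map from the condition $f^{*}D_i=qD_i$.

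First I would note that $f$, being a surjective endomorphism of a projective variety, is finite; hence $\Supp f^{*}D_i=f^{-1}(D_i)$, and since $f^{*}D_i=qD_i$ with $q\ge 1$ this forces $f^{-1}(D_i)=D_i$ as sets for every irreducible component $D_i$ of $D$. Consequently $f^{-1}(D)=D$, so $f^{-1}(T)=T$ and in particular $f(T)\subseteq T$. By the structure theorem for morphisms sending the big torus into the big torus (item~(3) of the description of toric morphisms recalled above; cf.\ \cite[Section 3.3]{CLS11} and \cite[Section 2]{Nak21}), I can then write $f=\alpha\cdot g$, where $g\colon X\to X$ is a toric endomorphism induced by a lattice homomorphism $\phi\colon \mathbf{N}\to\mathbf{N}$ compatible with the fan $\Delta$ of $X$, and $\alpha=f(e_T)\in T$. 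Since the $D_i$ are exactly the torus-invariant prime divisors $D_\rho$ (indexed by the rays $\rho$ of $\Delta$, with primitive generators $u_\rho\in\mathbf{N}$) and each of them is invariant under the whole $T$-action, multiplication $m_\alpha$ by $\alpha$ fixes each $D_i$; from $f=m_\alpha\circ g$ I therefore get $g^{*}D_\rho=f^{*}D_\rho=qD_\rho$ for every ray $\rho$.

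The heart of the argument is then a character computation. For $m$ in the dual lattice $\mathbf{M}=\mathbf{N}^{\vee}$ one has $\divv(\chi^{m})=\sum_\rho\langle m,u_\rho\rangle D_\rho$, a divisor supported only on $D$, while $g$ acts on characters by the transpose $\phi^{\vee}\colon\mathbf{M}\to\mathbf{M}$ of $\phi$, i.e.\ $g^{*}\chi^{m}=\chi^{\phi^{\vee}(m)}$. Comparing the two ways of computing $g^{*}\divv(\chi^{m})=\divv(g^{*}\chi^{m})$, namely $q\sum_\rho\langle m,u_\rho\rangle D_\rho$ on one side and $\sum_\rho\langle m,\phi(u_\rho)\rangle D_\rho$ on the other, and using that distinct prime divisors are linearly independent in the group of Weil divisors, I obtain $\langle m,\,\phi(u_\rho)-q\,u_\rho\rangle=0$ for all $m\in\mathbf{M}$ and all $\rho$, hence $\phi(u_\rho)=q\,u_\rho$. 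Because $X$ is projective the fan $\Delta$ is complete, so the $u_\rho$ span $\mathbf{N}_{\mathbb{R}}$; therefore $\phi=q\cdot\id_{\mathbf{N}}$, and translating back to $T$ this says precisely $g(t_1,\dots,t_n)=(t_1^{q},\dots,t_n^{q})$, which is the assertion.

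The only delicate point I anticipate is keeping the manipulations of possibly non-$\mathbb{Q}$-Cartier Weil divisors (the pullbacks $g^{*}D_\rho$, and the additivity $g^{*}\divv(\chi^m)=\sum_\rho\langle m,u_\rho\rangle\,g^{*}D_\rho$ together with $g^{*}\divv(\chi^m)=\divv(g^{*}\chi^m)$) on firm footing; this is taken care of by the finiteness of $f$, or, if one wishes to avoid invoking finiteness at all, by first carrying out the computation over the big open subset where the $D_\rho$ are Cartier and then extending the resulting identity of divisors over the codimension-$\ge 2$ complement by normality.
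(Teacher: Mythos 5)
Your proposal is correct and follows essentially the same route as the paper: decompose $f=\alpha\cdot g$ with $g$ toric, deduce $g^*D_i=qD_i$, show the lattice map scales each ray generator by $q$, and conclude from projectivity (completeness of the fan) that $\phi=q\,\id_{\mathbf{N}}$. The only difference is cosmetic: where the paper invokes the cone--orbit correspondence to see $\phi_{g,\mathbb{R}}|_{\sigma_i}=q\,\id_{\sigma_i}$, you verify $\phi(u_\rho)=q\,u_\rho$ by the explicit computation with $\divv(\chi^m)$, which if anything makes the key step more transparent.
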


\begin{proof}
Write $X=T_{\mathbf{N}}(\Delta)$.
Since \(f\) fixes the big torus, it follows that $f=\alpha\cdot g$ with $\alpha= f(e_T)$ and some toric endomorphism $g$.
Hence, $\alpha\cdot D_i=D_i$ and thus $g^*D_i=qD_i$ for each \(i\).
Let $\phi_g\colon\mathbf{N}\to \mathbf{N}$ be the induced lattice endomorphism. 
Since $g$ is finite, by the cone-orbit correspondence, we have $\phi_{g,\mathbb{R}}(\sigma_i)= \sigma_i$ for each $1$-dimensional $\sigma_i\in \Delta$. 
Note that $\phi_{g,\mathbb{R}}|_{\sigma_i}=q\id_{\sigma_i}$.
Since $X$ is projective, the support $|\Delta|=\mathbf{N}_{\mathbb{R}}$ is generated by $1$-dimensional cones.
Therefore, $\phi_{g,\mathbb{R}}=q \id_{\mathbf{N}_{\mathbb{R}}}$ and the lemma is proved.
\end{proof}

\begin{proof}[Proof of Theorem \ref{thm-equi-lifting-general}]
By Lemma \ref{lem-basechange} and Theorem \ref{thm-toric-trivial}, we have the following commutative diagram
\[
\xymatrix{
\widetilde{X}\ar@/^2pc/[rr]^{\tau}\ar[r]^{\sigma}&
\widehat{X}\ar[r]^{\widehat{\pi}}\ar[d]_{p_X}&\widehat{Y}\ar[d]^{p_Y}\\
&X\ar[r]^{\pi}&Y
}
\]
such that \(\widehat{\pi}\colon(X,\widehat{D}\coloneqq p_X^{-1}(D))\to \widehat{Y}\) is  a splitting toric fibration and \(\widetilde{\pi}\colon (X,\widetilde{D}\coloneqq\sigma^{-1}(\widehat{D}))\to \widehat{Y}\) is a splitting smooth toric fibration.
Hence, we can take \(\widetilde{Y}\) to be \(\widehat{Y}\) and replace \((X,D)\) by \((\widehat{X},\widehat{D})\).
Then we may assume that \((X,D)\) is a splitting toric fibration.
Consider the following commutative diagram
$$\xymatrix{
(\widetilde{X},\widetilde{D})\ar[rr]^{\sigma}\ar[rd]_{\widetilde{\pi}}&& (X,D)\ar[ld]^{\pi}\save[]+<2.5pc,0.05pc>*{\racts f} \restore\\
&Y&\save[]+<-3.2pc,0.05pc>*{\racts g} \restore
}$$
satisfying:
\begin{itemize}
\item $\pi$ and $\widetilde{\pi}$ are splitting toric fibrations, 
\item $\sigma$ is a birational morphism with $\sigma^{-1}(D)=\widetilde{D}$, and
\item $f$ is a surjective endomorphism such that $f^*D_i=qD_i$ for each irreducible component of $D$ and $q\ge 1$. (We do iteration here.)
\end{itemize}
Note that $\sigma$ induces a birational morphism $\sigma_0\colon\widetilde{X}_y\to X_y$ such that $\sigma_0^{-1}(D|_{X_y})=\widetilde{D}|_{\widetilde{X}_y}$. 
It is clear that $f$ lifts to a dominant self-map $\widetilde{f}$ on $\widetilde{X}$.
It suffices to show that $\widetilde{f}$ is well-defined everywhere. 
Pick any point \(y\in Y\). 
Let $(\widetilde{X}_y,\widetilde{D}|_{\widetilde{X}_y})$ and $(X_y,D|_{X_y})$ be the fiber of $\widetilde{\pi}$ and $\pi$ over \(y\), respectively.
Since $\pi$ is splitting, $D_{i,y}\coloneqq D_i\cap X_y$ is irreducible for any $y\in Y$ and each $i$.
Choose any two non-empty Zariski open subsets $U_1, U_2\subseteq Y$ such that $g(U_1)\subseteq U_2$, $(\pi^{-1}(U_j), D|_{\pi^{-1}(U_j)})\cong (X_y\times U_j,D|_{X_y}\times U_j)$, and $(\widetilde{\pi}^{-1}(U_j), \widetilde{D}|_{\widetilde{\pi}^{-1}(U_j)})\cong (\widetilde{X}_y\times U_j,\widetilde{D}|_{\widetilde{X}_y}\times U_j)$ for $j=1,2$.
Denote by $S$ the space of surjective endomorphisms \(\varphi\) of $X$. 
So under these isomorphisms, $f$ induces a morphism $f_0\colon X_y\times U_1\to X_{y}\times U_2$ such that $f_0(x, y)=(h(y)(x),g(y))$ where $h\colon U_1\to S$ is a morphism, noting that  
 $U_1\cap U_2\neq\emptyset$ and $h(y)^*(D_i|_{X_y})=qD_i|_{X_y}$ for any \(y\in U_1\).  
Let $T$ be the big torus of $(X_y,D|_{X_y})$ and $e_T=(1,\cdots, 1)\in T$.
Let $\alpha\colon U_1\to T$ be a morphism via $\alpha(y)=h(y)(e_T, y)$. 
By Lemma \ref{lem-toric-power},  $h(y)(t_1,\cdots,t_n)=\alpha(y)\cdot (t_1^q,\cdots, t_n^q)$ for any $(t_1,\cdots,t_n)\in T$ and $y\in U_1$.
So $\sigma_0|_{\widetilde{X}_y\backslash \widetilde{D}|_{\widetilde{X}_y}}\colon \widetilde{X}_y\backslash \widetilde{D}|_{\widetilde{X}_y}\to X_y\backslash D|_{X_y}$ is isomorphic.
In particular, $\widetilde{X}_y$ and $X_y$ share the same big torus $T$.
By Lemma \ref{lem-toric-ext2}, $f_0|_{T\times U_1}\colon T\times U_1\to T\times U_2$ extends to a morphism $\widetilde{X}_y\times U_1\to \widetilde{X}_y\times U_2$.
In particular, $\widetilde{f}$ is well-defined.

Finally, since each fiber is projective and $f$ is surjective, $\widetilde{f}$ is surjective.
\end{proof}

\begin{remark}
It is worth noting that the condition \(f^*D = qD\) in Theorem \ref{thm-equi-lifting-general} cannot be removed, as we are currently unable to extend Lemma \ref{lem-toric-power} to the general case.
\end{remark}

\begin{proof}[Proof of Theorem \ref{thm-equi-lifting}]
Since $\pi$ is already splitting, we see directly from the proof of Theorem \ref{thm-equi-lifting-general}, a birational morphism will be enough.
Note that \(\widetilde{X}\), as a smooth toric fibration over an abelian variety, is also smooth.  
\end{proof}

At the end of this section, we provide the following remark, which is well-known to experts.

\begin{remark}\label{rmk-trivial-mmp}
Let \(\pi\colon(X,B)\to Y\) be a smooth splitting toric fibration over an abelian variety \(Y\); in particular, \(X\) is smooth and \(K_X\) is not pseudo-effective over \(Y\).
By \cite[Corollary 1.3.2]{BCHM10}, we can run relative MMP over \(Y\). 
Let $F$ be a fiber of $\pi$.
Consider the relative MMP 
$$\xymatrix{
X=X_1\ar@{-->}[r]& X_2\ar@{-->}[r] &\cdots\ar@{-->}[r] &X_{m}
}$$
over $Y$.
Let $y\in Y$.
The MMP over $y$ is then a toric MMP.
Let \(B_1=B\) and for each \(2\leq i\leq m\), let \(B_i=\sigma(B_{i-1}^{\nu})\) where $B_{i-1}^{\nu}$ is the sum of irreducible components which are mapped to divisors on \(X_i\).
One can easily verify that {\textbf{\((X_i,B_i)\to Y\) is again a splitting toric fibration over $Y$}}.
\end{remark}

\section{Proof of Main Theorem}\label{sec-pf-main-thm}

In this section, we prove Main Theorem of this paper.

\begin{proof}[Proof of Main Theorem]
By Theorem \ref{theorem-tir}, it suffices for us to prove KSC for \hyperlink{Case TIR}{Case TIR$_n$}.
We prove it by induction on $n$.

By Theorems \ref{thm-tir} and \ref{thm-equi-lifting}, we can take a composition of a quasi-\'etale cover and a resolution to obtain the diagram:
$$\xymatrix{
X&\widetilde{X}\ar[l]_{\sigma}\ar[r]^{\alb_{\widetilde{X}}}& A
}$$
satisfying the following:
\begin{itemize}
\item $\sigma$ is a generically finite surjective morphism,
\item $K_{\widetilde{X}}+\widetilde{D}=\sigma^*(K_X+D)\sim_{\Q} 0$ (cf.~Theorem \ref{thm-toric-trivial}),
\item the Albanese map $\alb_{\widetilde{X}}$ is a \textbf{splitting smooth} toric fibration of the pair $(\widetilde{X},\widetilde{D})$ with $\widetilde{D}=\sigma^{-1}(D)$ (and in particular, \(\widetilde{X}\) is smooth),
\item Some iteration of \(f\) can lift to $\widetilde{f}\in \SEnd(\widetilde{X},\widetilde{D})$, and 
\item \(\SEnd(\widetilde{X},\widetilde{D})\) contains an int-amplified endomorphism \(\widetilde{\mathcal{I}}\).
\end{itemize}

By \cite[Theorem 1.2]{MZ20}, after iteration, we have $\widetilde{f}$-equivariant and $\widetilde{\mathcal{I}}$-equivariant MMP over $A$:
$$\xymatrix{
\widetilde{X}=\widetilde{X}_1\ar@{-->}[r]^{\widetilde{\pi}_1}& \widetilde{X}_2\ar@{-->}[r]^{\widetilde{\pi}_2} &\cdots\ar@{-->}[r]^{\widetilde{\pi}_{s-1}} &\widetilde{X}_{s}\ar[r]^{\widetilde{\tau}} &\widetilde{Y}
}$$
where $\widetilde{\pi}_i$ is birational and $\widetilde{\tau}$ is a Fano contraction of some $K_{\widetilde{X}_s}$-negative extremal ray.
Let $\widetilde{f}_s=\widetilde{f}|_{\widetilde{X}_s}$.
Let $\widetilde{D}_s$  be the image of $\widetilde{D}$ in $\widetilde{X}_s$.
By Remark \ref{rmk-trivial-mmp}, $\widetilde{\tau}$ is again a \textbf{splitting} toric fibration of the pair $(\widetilde{X}_s, \widetilde{D}_s)$.
Note that $-K_{\widetilde{X}_s}\sim_{\mathbb{Q}}\widetilde{D}_s$ and $\widetilde{D}_s$ is reducible, the number of irreducible components of which is greater than \(\dim\widetilde{X}-\dim A\). 
So $(\widetilde{f}_s, \widetilde{X}_s, \widetilde{\tau})$ does not satisfy \hyperlink{Case TIR}{Case TIR} (condition A2) and hence the MMP starting from $\widetilde{X}$ involves at most \hyperlink{Case TIR}{Case TIR$_m$} with $m<n$. 
By Theorem \ref{theorem-tir} and induction, KSC holds for $\widetilde{f}$.
So KSC holds for $f$ by Lemma \ref{lem-ksc-iff}.
\end{proof}

\begin{remark}\label{rem-non-existence-tir}
    Indeed, in the proof of Theorem \ref{main-theorem-ksc}, if we continue to run (any) MMP starting from $\widetilde{Y}$, then it will eventually end up with $A$ and the whole MMP involves no \hyperlink{Case TIR}{Case TIR}, because we are always in the setting of splitting toric fibrations by Remark \ref{rmk-trivial-mmp}. 
    In particular, any MMP starting from \(\widetilde{X}\) does not have \hyperlink{Case TIR}{Case TIR}. 
    However, this does not mean that we can show the non-existence of \hyperlink{Case TIR}{Case TIR} for the initial  \(X\).
\end{remark}

\end{document}